\newcommand{\folder}{./}
\definecolor{darkblue}{rgb}{0,0,0.6}
\tikzset{
  dotted/.style={pattern=dots,pattern color=#1},
  dotted/.default=black
}
\tikzset{
  fdotted/.style={pattern=crosshatch dots,pattern color=#1},
  fdotted/.default=black
}
\tikzset{
  scopedlines/.style={pattern=north east lines,pattern color=#1},
  scopedlines/.default=black
}
\tikzset{
  hrlines/.style={pattern=horizontal lines,pattern color=#1},
  hrlines/.default=black
}
\DeclareMathOperator{\rQ}{Q}
\newcommand{\bC}{{\mathbb C}}
\newcommand{\bN}{{\mathbb N}}
\newcommand{\bP}{{\mathbb P}}
\newcommand{\bQ}{{\mathbb Q}}
\newcommand{\bR}{{\mathbb R}}
\newcommand{\bZ}{{\mathbb Z}}
\newcommand{\cA}{{\mathscr A}}
\newcommand{\cB}{{\mathscr B}}
\newcommand{\cC}{{\mathscr C}}
\newcommand{\cD}{{\mathscr D}}
\newcommand{\cK}{{\mathscr K}}
\newcommand{\cM}{{\mathscr M}}
\newcommand{\cX}{{\mathscr X}}
\newcommand{\dO}{{\mathcal O}}
\newcommand{\dP}{{\mathcal P}}
\newcommand{\fB}{{\mathfrak B}}
\newcommand{\fI}{{\mathfrak I}}
\newcommand{\eps}{\varepsilon}
\renewcommand{\phi}{\varphi}
\DeclareMathOperator{\Proj}{Proj}
\DeclareMathOperator{\Xh}{\hat{X}}
\DeclareMathOperator{\iso}{\cong}
\newcommand{\twolines}[2]{\genfrac {}{}{0pt}{}{#1}{#2}}
\DeclareMathOperator{\dual}{^{\vee}}					
\DeclareMathOperator{\pr}{pr}
\DeclareMathOperator{\inj}{\hookrightarrow}
\DeclareMathOperator{\Sym}{Sym}
\DeclareMathOperator{\too}{\longrightarrow}
\DeclareMathOperator{\rank}{rk}
\DeclareMathOperator{\Pic}{Pic}
\DeclareMathOperator{\NS}{NS}
\DeclareMathOperator{\birKbar}{\overline{\cB\cK}}
\DeclareMathOperator{\Def}{Def}
\DeclareMathOperator{\Coh}{Coh}
\DeclareMathOperator{\Hilb}{Hilb}
\DeclareMathOperator{\tr}{tr}
\newcommand{\Xtild}{\widetilde{X}}
\newcommand{\Dtild}{\widetilde{D}}
\newcommand{\Atild}{\widetilde{A}}
\newcommand{\Xhat}{\hat{X}}
\newcommand{\Chat}{\hat{C}}
\renewcommand{\div}{{\rm div}}
\DeclareMathOperator{\Nef}{Nef}
\DeclareMathOperator{\Amp}{Amp}
\DeclareMathOperator{\half}{\frac{1}{2}}
\DeclareMathOperator{\LambdaKE}{\Lambda_{\rm K3}} 
\DeclareMathOperator{\LambdaKEn}{\Lambda_{{\rm K3}^{[n]}}}
\DeclareMathOperator{\LambdaKEtwo}{\Lambda_{{\rm K3}^{[2]}}}
\DeclareMathOperator{\Supp}{Supp}
\DeclareMathOperator{\NEbar}{\overline{NE}}
\newcommand\restr[2]{{
  \left.\kern-\nulldelimiterspace 
  #1 
  \vphantom{\big|} 
  \right|_{#2} 
  }}
\newif\ifmyversion
\newcommand{\TODO}[1]{}
\newcommand{\Martin}[1]{}
\theoremstyle{plain}
\newtheorem{proposition}{Proposition}[section]
\newtheorem{lemma}[proposition]{Lemma}
\newtheorem{corollary}[proposition]{Corollary}
\newtheorem{theorem}[proposition]{Theorem}
\newtheorem*{theorem*}{Theorem}
\newtheorem*{conjecture*}{Conjecture}
\newtheorem*{proposition*}{Proposition}
\newtheorem*{corollary*}{Corollary}
\theoremstyle{definition}
\newtheorem{definition}[proposition]{Definition}
\newtheorem*{notation*}{Notation}
\newtheorem{remark}[proposition]{Remark}
\theoremstyle{remark}
\newtheoremstyle{name}
   {}{}{\itshape}{}{\bfseries }{}{ }{\thmname{#3}.}
\theoremstyle{name}
\newtheorem{name}{bla}
\numberwithin{equation}{section}					
\DeclareMathOperator{\SShat}{\widehat{S\times S}}
\DeclareMathOperator{\PPhat}{\widehat{\bP^2\times \bP^2}}
\DeclareMathOperator{\SxS}{S\times S}
\DeclareMathOperator{\PxP}{\bP^2\times \bP^2}
\DeclareMathOperator{\pipi}{\pi\times \pi}
\DeclareMathOperator{\Pdual}{\bP^2\dual}
\DeclareMathOperator{\Pz}{\bP^2}
\DeclareMathOperator{\dOi}{\dO(1)}
\DeclareMathOperator{\dOii}{\dO(1,1)}
\DeclareMathOperator{\pipihat}{\widehat{\pi\times \pi}}
\DeclareMathOperator{\piz}{\pi^{[2]}}
\begin{document}

\title[Non-divisorial base locus of big and nef line bundles on K3$^{[2]}$-type]{On the non-divisorial base
  locus of big and nef line bundles on K3$^{[2]}$-type varieties}  
\author[U. Rie\ss]{Ulrike Rie\ss}
\address{ETH Z\"urich, Institute of theoretical studies, Clausisusstrasse 47, 8092 Z\"urich, Switzerland}
\email{ulrike.riess@eth-its.ethz.ch}

\begin{abstract}
  We approach non-divisorial base loci of big and nef line bundles on irreducible symplectic
  varieties.
  While for K3 surfaces, only divisorial base loci can occur, nothing was known about the
  behaviour of non-divisorial base loci for more general irreducible symplectic varieties.
  
  We determine the base loci of all big and nef line
  bundles on the Hilbert scheme of two points on very general K3 surfaces of genus two and on their birational
  models.
  Remarkably, we find an ample line bundle with a non-trivial base locus in codimension two.
  We deduce that, generically in the moduli spaces of polarized K3$^{[2]}$-type varieties, the
  polarization is base point free.

\end{abstract}

\maketitle

\section{Introduction}

In this article
we study base loci of big and nef line bundles on certain irreducible symplectic varieties.
While in a previous article \cite{Riess18}, we analysed the base divisors of big and nef line bundles, we do a
first approach towards base loci in higher codimension in this article.

Starting point for the analysis of base loci of big and nef line bundles on irreducible symplectic varieties was the observation that K3
surfaces behave special in the context of Fujita's conjecture: Consider an ample line bundle $H$ on a K3
surface $X$. Fujita's conjecture would predict that $3H$ is base point free (see
e.g.~\cite[Conjecture 10.4.1]{LazarsfeldII} for the full statement of Fujita's conjecture). However, it was
shown by 
Mayer (\cite{Mayer72}) that already $2H$ is base point free in this case. 

Remarkably, also for abelian varieties $A$, it is known that  $2H$ is base point free for every ample line
bundle $H\in \Pic(A)$ (see \cite{MumfordAV}).

Note that for abelian varieties and for K3 surfaces, the bounds are even better than predicted by
Fujita's conjecture. This suggests, that it might be particularly interesting to study questions related
to base points of ample line bundles for irreducible symplectic varieties.

\bigskip

The base loci of big and nef line bundles on K3 surfaces are completely understood:

\begin{proposition}[{\cite{Mayer72}}]\label{prop:mayer}
  Let $X$ be a (complex) K3 surface and $H \in \Pic(X)$ a line bundle which is big and nef. 
Then $H$ has base points if and only if $H=\dO(mE+C)$, where $m\geq2$, $E$ is a smooth elliptic curve, and
$C$ is a smooth rational curve, such that $(E, C)=1$. In this case the base locus of $H$ is exactly $C$.
\end{proposition}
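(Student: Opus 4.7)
I would split the statement into its two implications, using the classical moving-part/fixed-part decomposition of a linear system together with two K3-specific ingredients due to Saint-Donat and Mayer.

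For the sufficient direction, assume $H = \dO(mE+C)$ with the stated numerical data. A direct intersection calculation from $E^2=0$, $C^2=-2$, $(E,C)=1$ gives $H^2 = 2m-2 > 0$, $H\cdot E = 1$, $H\cdot C = m-2 \geq 0$, so $H$ is big and nef. Since $|E|$ is a base point free elliptic pencil on $X$, so is $|mE|$, which shows $\mathrm{Bs}|H| \subseteq C$. The reverse inclusion follows from comparing dimensions by Riemann--Roch plus Kodaira vanishing: both $h^0(H) = H^2/2+2 = m+1$ and $h^0(mE) = m+1$, so the inclusion $H^0(mE) \hookrightarrow H^0(H)$ induced by multiplication with the defining section of $C$ is an isomorphism, forcing every section of $H$ to vanish on $C$.

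For the necessary direction, let $H$ be big and nef with non-empty base locus, and decompose $|H| = |M|+F$ into the moving part $|M|$ and the fixed divisor $F$. The central input I would invoke is Saint-Donat's theorem: the moving part of any big and nef linear system on a K3 surface is base point free. Applied to $|M|$, this gives $\mathrm{Bs}|H| \subseteq \Supp F$, so non-triviality of the base locus forces $F \neq 0$. Every irreducible component of $F$ has strictly negative self-intersection (otherwise its class would move by Riemann--Roch, contradicting fixedness), so by adjunction on a K3 surface each such component is a smooth rational $(-2)$-curve.

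It remains to exhibit the precise form $H=mE+C$. I would distinguish cases according to $M^2$: a careful Riemann--Roch analysis relating $h^0(H)$, $h^0(H-C')$ and $M\cdot C'$ for each irreducible component $C' \subseteq F$, combined with the nefness of $M$, forces $M^2 = 0$. On a K3 surface a base point free square-zero linear system is necessarily a multiple $|mE|$ of an elliptic pencil $|E|$, so $M \sim mE$. The remaining numerical constraints, namely $H$ big and nef, $F$ purely fixed, and $H^2 > 0$, then force $F$ to be a single smooth rational curve $C$ with $(E,C)=1$ and $m \geq 2$. The main obstacle I anticipate is Saint-Donat's base-point-freeness theorem together with the classification of square-zero base point free linear systems on K3 surfaces; both rely on the delicate Bertini and Stein-factorisation arguments of Saint-Donat's original paper, and I would invoke them as black boxes rather than reprove them here.
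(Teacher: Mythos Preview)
The paper does not actually prove this proposition: it is quoted as a classical result of Mayer (with a pointer to Saint-Donat for characteristic $\neq 2$) and used as input. So there is no argument in the paper to compare your proposal against.

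On its own merits, your sketch is the standard Saint-Donat/Mayer route and the sufficient direction is complete and correct. For the necessary direction the outline is right, but the sentence ``a careful Riemann--Roch analysis \dots\ forces $M^2=0$'' hides the entire content of the proof. Concretely: from $|H|=|M|+F$ one has $h^0(H)=h^0(M)$, and if $M^2>0$ then Kodaira vanishing gives $H^2=M^2$, i.e.\ $2M\cdot F+F^2=0$; ruling this out is exactly where one needs the finer structure results (Saint-Donat's analysis of the generic member of $|M|$ and the negative-definiteness of the components of $F$), not just a dimension count. Likewise, once $M\sim mE$, reducing $F$ to a \emph{single} $(-2)$-curve with $(E,C)=1$ is not purely numerical---one has to use nefness of $H$ together with the fact that the intersection form on the components of $F$ is negative definite. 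Since you explicitly flag these as black boxes and cite the original sources, the proposal is acceptable as a reference to the literature, which is precisely what the paper itself does.
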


Let us mention, that a similar result in arbitrary characteristic $\neq 2$ is
\cite[Proposition 8.1]{Saint-Donat74}.

The proposition has the following immediate consequence:

\begin{corollary}\label{cor:bpfifno0-class}
  If $X$ is a K3 surface with $\Pic(X)=H\cdot \bZ$ for an ample line bundle $H$, then $H$ is base
  point free. In particular for generic elements $(X,H)$ in the moduli space of polarized K3 surfaces, the
  polarization $H$ is base point free. \hfill $\square$ 
\end{corollary}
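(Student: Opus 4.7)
The plan is to deduce both statements directly from Proposition~\ref{prop:mayer} together with the adjunction formula on a K3 surface.

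First, assume for contradiction that $H$ has base points. By Proposition~\ref{prop:mayer}, we can write $H = \dO(mE+C)$ with $m\geq 2$, $E$ a smooth elliptic curve, and $C$ a smooth rational curve with $(E,C)=1$. In particular, the class of $C$ lies in $\Pic(X)=H\cdot\bZ$, so $C\equiv nH$ for some $n\in\bZ$. On the other hand, by adjunction on the K3 surface $X$, the smooth rational curve $C$ satisfies $C^2=-2$. But $H$ is ample, so $H^2>0$, and therefore $C^2=n^2H^2\geq 0$ for any $n\in\bZ$. This contradiction shows that no such decomposition can exist, and hence $H$ is base point free.

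For the second assertion, one invokes the well-known structure of the moduli space of polarized K3 surfaces: the Noether--Lefschetz locus, i.e.\ the points corresponding to K3 surfaces $(X,H)$ with $\rank\Pic(X)\geq 2$, is a countable union of proper closed subvarieties. Consequently, for a very general polarized K3 surface $(X,H)$, one has $\Pic(X)=H\cdot\bZ$, and the first part of the corollary applies.

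The key observation, and essentially the only step of substance, is that the Picard lattice of a K3 surface with $\Pic(X)=H\cdot\bZ$ ($H$ ample) contains no $(-2)$-class, so in particular no smooth rational curve exists on $X$; this rules out the only obstruction to base point freeness given by Mayer's theorem. There is no serious obstacle here — the corollary is genuinely immediate from Proposition~\ref{prop:mayer}.
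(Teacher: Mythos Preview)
Your proof is correct and takes exactly the approach the paper intends: the corollary is marked with a bare $\square$, signalling that it is immediate from Proposition~\ref{prop:mayer}, and your observation that a K3 surface with $\Pic(X)=\bZ\cdot H$ (with $H$ ample) carries no $(-2)$-curve is precisely the point.

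One small refinement is worth making. The paper explicitly distinguishes ``generic'' (holding on a Zariski-open dense subset) from ``very general'' (holding outside a countable union of closed subsets); your Noether--Lefschetz argument only shows that $\Pic(X)=\bZ\cdot H$ for \emph{very general} $(X,H)$, hence base point freeness for very general $(X,H)$. To obtain the stated \emph{generic} conclusion, add the standard observation that base point freeness is an open condition in families: since it holds on the dense set of very general points, it holds on a Zariski-open dense subset of the moduli space.
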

  
For higher dimensional irreducible symplectic varieties, the only known results are in \cite{Riess18}, where
we discussed the divisorial part of the base loci of big and nef line bundles. Under certain conditions on the
deformation type, which are satisfied for K3$^{[n]}$-type and Kum$^n$-type, and which we expect to hold in general, we were able to give a complete
description.
In particular, we obtain the following analogue of Proposition 1.1 in the case of K3$^{[n]}$-type varieties:

\begin{proposition}[{\cite[Proposition 8.1]{Riess18}}]
  Let $X$ be an irreducible symplectic variety of K3$^{[n]}$-type and $H\in \Pic(X)$ a line bundle that
  is big and nef.  
  Then  $H$ has a fixed divisor if and only if $H$ is of the form
  $H=mL+F$, where $m\geq 2$, $L$ is movable with $q(L)=0$, and $F$ is an irreducible reduced divisor of
  negative square with $(L,F)_q=1$. In this case $F$ is the fixed divisor of $H$.
\end{proposition}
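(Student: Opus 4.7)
The plan is to follow the template of Mayer's result for K3 surfaces (Proposition~\ref{prop:mayer}), replacing the elliptic curve $E$ by a movable class $L$ of square zero and the smooth rational curve $C$ by an irreducible reduced divisor $F$ of negative square. The two main tools are Boucksom's divisorial Zariski decomposition on irreducible symplectic manifolds, and the explicit description of the movable/nef cone of a K3$^{[n]}$-type variety together with the numerical classification of its wall divisors.

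For the sufficiency direction, suppose $H=mL+F$ satisfies the stated hypotheses. Since $L$ is movable with $q(L)=0$, in the K3$^{[n]}$-type setting $L$ sits on the boundary of the movable cone and, up to a birational modification, is associated to a rational Lagrangian fibration; in particular $|mL|$ carries no divisorial fixed part. The identity $H-F=mL$ then forces $F$ to be contained in the fixed divisor of $H$. To pin the multiplicity to exactly one and rule out further fixed components, I would combine a Riemann--Roch dimension count for irreducible symplectic manifolds with the explicit pairings $(L,F)_q=1$, $q(H)=2m+q(F)>0$, and $(H-F,F)_q=m$, which together force $h^0(H)=h^0(mL)$.

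For the necessity direction, take $H$ big and nef with non-trivial fixed divisor and apply Boucksom's divisorial Zariski decomposition $H=P(H)+N(H)$, where $P(H)$ lies in the closure of the movable cone and $N(H)=\sum a_i F_i$ is a non-negative combination of prime exceptional divisors, each with $q(F_i)<0$. The fixed divisor of $H$ is supported on $N(H)$, hence $N(H)\neq 0$. Since $H$ is nef, $P(H)$ lies on a specific wall of the movable cone cut out by the $F_i$. Using the description of these walls on K3$^{[n]}$-type manifolds I would argue that exactly one prime exceptional divisor $F$ occurs in $N(H)$, with coefficient $1$, and that the opposite face of the movable cone is isotropic for $q$; this identifies $P(H)$ with a positive multiple $mL$ of a primitive square-zero movable class $L$. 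Bigness of $H$ then forces $m\geq 2$, and the pairing $(L,F)_q=1$ is extracted from the numerical constraints that a wall divisor of K3$^{[n]}$-type must satisfy.

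The most delicate step is the last one. The Boucksom decomposition by itself only produces a non-negative real combination of prime exceptional divisors, so the integrality, the fact that exactly one component occurs and does so with multiplicity one, and above all the precise value $(L,F)_q=1$, all hinge on the numerical classification of wall divisors specific to K3$^{[n]}$-type, together with careful bookkeeping of how the actual fixed divisor of $H$ relates to the Zariski negative part of $P(H)$, which by construction has no divisorial base locus but need not be base-point free in higher codimension.
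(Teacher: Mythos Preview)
This proposition is not proved in the present paper; it is quoted from \cite{Riess18}, so there is no in-paper argument to compare your sketch against.

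That said, the necessity direction of your proposal contains a real error. Boucksom's divisorial Zariski decomposition is an asymptotic invariant: for any nef class $H$ the negative part $N(H)$ vanishes, since $H$ already lies in the closure of the movable (equivalently, birational K\"ahler) cone --- in algebraic terms, the diminished base locus $\mathbf{B}_-(H)$ of a nef divisor is empty. Thus $H=P(H)+N(H)$ is just $H=H+0$ and cannot detect the fixed part of the single linear system $|H|$. The phenomenon at stake is genuinely non-asymptotic (by the base-point-free theorem $|mH|$ is free for $m\gg 0$, as $K_X=0$) and must instead be attacked through the elementary fixed/moving decomposition $|H|=F+|M|$, followed by a direct comparison of $h^0(X,H)$ with $h^0(X,H-F)$ and the identification of $M$ with a multiple of an isotropic movable class via the K3$^{[n]}$-specific input you allude to. Your sufficiency sketch is on firmer ground: the Riemann--Roch count you propose, combined with Kodaira vanishing for the big and nef $H$ and the Lagrangian-fibration computation $h^0(X,mL)=\binom{m+n}{n}$, gives $h^0(X,H)=\binom{m+q(F)/2+n+1}{n}\geq h^0(X,mL)$, which forces $q(F)=-2$ and hence equality, so $F$ is indeed the entire fixed divisor.
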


Until now, nothing is known about the behaviour of non-divisorial base loci of big and nef line bundles
on irreducible symplectic varieties of dimension $>2$.
In this article, we do a first step to study these.
We study base points of big and nef line bundle on (four-dimensional) irreducible
symplectic varieties of K3$^{[2]}$-type.

The core of the article is the detailed study of the following example: 
The Hilbert scheme $X\coloneqq\Hilb^2(S)$, where $S$ is a K3 surface with $\Pic(S)\iso \bZ\cdot H_S$ for an ample
line bundle with $(H_S)^2=2$.
The main part of this article is to determine the base loci of all big and nef line bundles on $X$ and on its unique birational model $X'$:

\begin{theorem*}[Compare Theorem \ref{thm:baselocusH+L}, Proposition \ref{prop:mostlyokonX}, and Proposition \ref{prop:okinspeccase}]
  There is a unique big and nef line bundle (which we call $H+L$) on $X$ with non-trivial non-divisorial base
  locus isomorphic to $\bP^2$. All other big and nef line bundles on $X$ are base point free. 
  Furthermore, every big and nef line bundle on $X'$ is base point free.
\end{theorem*}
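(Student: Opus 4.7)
My plan is to leverage that $\Pic(X)=\bZ H\oplus\bZ\delta$ has rank two, so the nef cones of $X$ and of its unique birational model $X'$ are polyhedral, each spanned by exactly two extremal rays; these, together with the flopping wall separating $\Nef(X)$ from $\Nef(X')$ inside $\Movbar(X)$, are pinned down in the preceding sections. I would then enumerate all integral big and nef classes on each side and analyse their base loci case by case.

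The geometric heart of the argument is the Lagrangian embedding $j\colon\bP^2\hookrightarrow X$ given by $p\mapsto\pi^{-1}(p)$, where $\pi\colon S\to\bP^2$ is the double cover defined by $|H_S|$. This $\bP^2$ is the flopping locus of the Mukai flop $\phi\colon X\dashrightarrow X'$. The first key step is to compute the pullback $j^*\colon\Pic(X)\to\Pic(\bP^2)$ by the projection formula for the double cover, together with the intersection of $\bP^2$ with the exceptional divisor $E$, which is exactly the branch sextic of $\pi$. From this I identify the unique big and nef class on $X$ for which the restriction $H^0(X,D)\to H^0(\bP^2,j^*D)$ is forced to vanish, namely the class the paper calls $H+L$.

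For every other big and nef class on $X$ (Proposition \ref{prop:mostlyokonX}) and for every big and nef class on $X'$ (Proposition \ref{prop:okinspeccase}), the plan is to prove base point freeness. On the two boundary rays of each nef cone, base point freeness is immediate: one boundary is the Hilbert-Chow ray (pullback of an ample class from $\Sym^2(S)$), and the opposite boundary is a square-zero movable class defining a Lagrangian fibration, hence a morphism. For classes in the interior I would decompose $D=A+B$ with $A$ on a boundary ray and $B$ effective, and then combine: (i) the restriction $j^*D$ is sufficiently positive on $\bP^2$ to rule out $\bP^2$ from the base locus; (ii) outside $\bP^2$ and its flopped partner $(\bP^2)^{\vee}\subset X'$ the birational map $\phi$ is an isomorphism, so base point freeness on either side transfers to the complement on the other; (iii) Kawamata-Viehweg vanishing, available on the hyperk\"ahler fourfold $X$ because $K_X$ is trivial, promotes the local section information to global sections without obstruction.

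For the main case (Theorem \ref{thm:baselocusH+L}), the equality between the base locus of $H+L$ and $\bP^2$ is proved in two halves. The containment $\bP^2\subseteq$ base locus follows from the vanishing of $j^*(H+L)$ identified above, or equivalently from a Riemann-Roch plus Kodaira vanishing dimension count showing that $h^0(X,H+L)=h^0(X,(H+L)\otimes\dI_{\bP^2})$. The reverse containment uses the Mukai flop: by Proposition \ref{prop:okinspeccase} the image line bundle $\phi_*(H+L)$ is base point free on $X'$, and its sections pull back via $\phi$ to sections of $H+L$ on $X\setminus\bP^2$ with no common zero there. I expect the main technical hurdle to lie in tracking how $\phi$ acts on line bundles and on global sections across the flop, and in verifying the dimension count forcing $\bP^2$ into the base locus; once these are in place, the enumeration of remaining cases becomes routine.
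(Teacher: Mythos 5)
Your outline breaks down at the two places where the paper's argument is genuinely non-formal. First, the identification of $H+L$ and the inclusion $\bP^2\subseteq \mathrm{Bs}(H+L)$ cannot come from ``the vanishing of $j^*(H+L)$'': since $\deg(H|_C)=2$ and $\deg(L|_C)=-1$ for a line $C\subseteq P$, one has $j^*(H+L)\iso\dO_{\bP^2}(1)$, which is \emph{positive}, and there is no numerical or dimension-count reason ($h^0(X,H+L)=15$, $h^0(P,\dO(1))=3$) forcing the restriction map $H^0(X,H+L)\to H^0(P,\dO(1))$ to be zero. The paper has to prove instead that the multiplication map $\mu\colon H^0(X,L)\otimes H^0(X,H)\to H^0(X,H+L)$ is surjective (an $18\to 15$ map with exactly three-dimensional kernel), and this is done by an explicit computation with sections on $\bP^2\times\bP^2$ coming from the double cover $S\to\bP^2$; only then does ``every section of $L$ vanishes on $P$'' propagate to $H+L$. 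For the reverse containment you appeal to Proposition \ref{prop:okinspeccase} applied to $\phi_*(H+L)$, but $H'+L'$ is not nef on $X'$ (the nef cone of $X'$ is $\langle H'+2L',L'\rangle$), so that proposition does not apply; the correct route is $\mathrm{Bs}(H+L)\subseteq\mathrm{Bs}(L)\subseteq P$, using surjectivity of $\mu$, base point freeness of $H$, and base point freeness of $L'$ (Matsushita).

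Second, for the remaining big and nef classes your criterion ``$j^*D$ is sufficiently positive on $\bP^2$'' cannot work, because it fails to distinguish $H+L$ (base locus $P$, restriction $\dO(1)$) from $2H+L$ or $H+2L$ (base point free, restrictions $\dO(3)$ and $\dO(0)$): positivity of the restriction gives no surjectivity of $H^0(X,D)\to H^0(P,j^*D)$ by itself. The paper's actual mechanism is Kodaira vanishing on the blow-up $\Xhat$, which requires $\phi^*(A)-2\sum_iE_i$ (resp.\ $\phi'^*(A')-2\sum_iE_i$) to be big and nef; because the constant $m_i$ equals $\tfrac12$ in this example, that argument breaks down exactly for $A=aH+L$ (coefficient $b=1$), and the paper needs a separate, quite different argument there: $aH+L=\det\big((kH_S)^{[2]}\big)$ is globally generated for $a=k-1\geq 2$ because the tautological bundle of a very ample $kH_S$ is globally generated. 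Your proposal contains no substitute for either the multiplication-map computation or the tautological-bundle step, and both the uniqueness of $H+L$ and the base point freeness of $aH+L$, $a\geq 2$, depend on them.
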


In particular this gives the first known example of a non-trivial non-divisorial base locus of a big and nef line
bundle on an irreducible symplectic variety.
 This is a phenomenon, which does not occur for K3 surfaces.

 We use these results to deduce the following statement on generic base point freeness (which can be thought
 of a partial generalization of Corollary \ref{cor:bpfifno0-class} to  K3$^{[2]}$-type varieties):

\begin{theorem*}[{compare Theorem \ref{thm:genbpf4} and Theorem \ref{thm:genbpf!}}]
    Consider the moduli space $\cM_{d,m}$ of 
    polarized irreducible symplectic varieties of K3$^{[2]}$-type (i.e.,\ the space parametrizing pairs
    $(X,A)$ where $X$ is an irreducible symplectic variety of K3$^{[2]}$-type, and $A$ is a primitive, ample
    line bundle, which satisfies $q(A)=2d$ and $\div(A)=m$).
%
    For a generic pair $(X,A)\in \cM_{d,m}$ the line bundle $A$ is base point free.
\end{theorem*}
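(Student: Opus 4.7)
My plan is to combine openness of base-point-freeness in flat families with the concrete base-point-free examples produced earlier in this paper.

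For the openness step, let $\pi:\cX\to B$ be any flat family of irreducible symplectic varieties of K3$^{[2]}$-type equipped with a relatively big and nef line bundle $\cA$. Kawamata--Viehweg vanishing together with $K_X=0$ gives $R^i\pi_*\cA=0$ for $i>0$, so $\pi_*\cA$ is locally free of constant rank $h^0(X,A)$. The fibrewise base locus of $\cA$ is the support of $\coker(\pi^*\pi_*\cA\to\cA)$, and its image in $B$ under the proper map $\pi$ is the locus of points where $A_t$ is not base point free; this is closed in $B$. Applied to a smooth étale cover of $\cM_{d,m}$, this shows that base-point-freeness is an open condition on the moduli space.

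It therefore suffices to exhibit, in each irreducible component of $\cM_{d,m}$, one polarized variety on which the polarization is base point free. For this I would take $(X_0,A_0)=(\Hilb^2(S),A_0)$ for a K3 surface $S$ chosen so that $A_0\in\Pic(\Hilb^2(S))$ is primitive ample, $q(A_0)=2d$, $\div(A_0)=m$, and $(X_0,A_0)$ lies in the prescribed component. For all $A_0$ that are not of the exceptional type $H+L$ isolated in Theorem \ref{thm:baselocusH+L}, Propositions \ref{prop:mostlyokonX} and \ref{prop:okinspeccase} deliver base-point-freeness directly. For the few numerical invariants $(d,m)$ corresponding to $H+L$, I would replace $X_0$ by the birational model $X'$, on which every big and nef line bundle is base point free.

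To connect these examples to a truly generic $(X,A)\in\cM_{d,m}$, I would invoke the Noether--Lefschetz principle for polarized hyperkähler varieties, according to which $\{\Pic(X)=\bZ\cdot A\}$ is dense in every component of $\cM_{d,m}$. Such $A$ automatically has no base divisor by the proposition on base divisors from \cite{Riess18}: no decomposition $A=mL+F$ can live inside $\bZ\cdot A$, since $L,F$ would need $q(L)=0$ and $q(F)<0$ while $q(A)>0$. Hence only non-divisorial base loci remain, and these are excluded by the openness argument combined with the examples above. The main obstacle I anticipate is combinatorial rather than geometric: one must match each irreducible component of $\cM_{d,m}$ --- described via the period map in terms of $\Mon^2$-orbits on $\LambdaKEtwo$ --- to a concrete Hilbert scheme (or its birational model) carrying a polarization of the desired numerical type, and verify that the resulting class falls in the explicit base-point-free range identified in this paper rather than into the isolated $H+L$ orbit.
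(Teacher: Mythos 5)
Your strategy (openness of base point freeness in families, plus one explicit base-point-free example per numerical type, plus connectedness of $\cM_{d,m}$) is exactly how the paper handles every case except $(d,m)=(3,2)$ (Theorem \ref{thm:genbpf4}); the Noether--Lefschetz digression about $\Pic(X)=\bZ\cdot A$ adds nothing once you have one base-point-free member and openness. The genuine gap is your treatment of the exceptional case. For $(d,m)=(3,2)$ your fallback --- ``replace $X_0$ by the birational model $X'$, on which every big and nef line bundle is base point free'' --- does not produce an example: the birational transform $H'+L'$ of $H+L$ is \emph{not} nef on $X'$, since $\Nef(X')=\langle H'+2L',\,L'\rangle$ and $H'+L'$ lies outside this cone, so Proposition \ref{prop:okinspeccase} says nothing about it. Moreover, the integral classes of the movable cone are $aH+bL$ with $a,b\geq 0$ and $q(aH+bL)=2a(a+2b)$, so $q=6$ and $\div=2$ force $a=b=1$: the \emph{only} big and nef class with these invariants on $X$ or $X'$ is $H+L$ itself, which by Theorem \ref{thm:baselocusH+L} has a two-dimensional base locus. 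So in this component your method has no base-point-free member to feed into the openness argument, and the argument stalls precisely where the statement is most interesting.

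The paper closes this gap by a different mechanism (Sections \ref{sec:geometryEX}--\ref{sec:genbpfforH+L}): it determines the base locus of $H+L$ to be $P\iso\bP^2$ with its \emph{reduced} scheme structure (Proposition \ref{prop:structureofBH+L}, using surjectivity of $H^0(L)\otimes H^0(H)\to H^0(H+L)$ and the Mukai flop), and then shows that in any one-parameter deformation of $(X,H+L)$ whose very general fibre has $\rho=1$ the base loci cannot persist as a flat family of surfaces: a curve class $[C]$ in $P$ pairs as $\deg(\alpha.[C])=\tfrac12(\alpha,W)_q$, and on a $\rho=1$ fibre the transported class $W_t$ is no longer of type $(1,1)$, so some transcendental class pairs nontrivially with $[\cD_t]$, contradicting algebraicity (Proposition \ref{prop:H+Lgenbpf}). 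Base point freeness for generic $(X,A)\in\cM_{3,2}$ is thus obtained not from an explicit example but from showing the base locus of $H+L$ dies under generic deformation; some argument of this kind (or a genuinely new example outside the Hilbert-scheme/flop picture) is indispensable, and it is missing from your proposal.
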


We start by presenting some important facts on irreducible symplectic varieties in Section
\ref{sec:defISV+basics}. 

In Section \ref{sec:BPinexample}, we use general techniques to study the
particular examples $X$ and $X'$ as above.
On this specific irreducible symplectic variety $X$, we exploit that its birational geometry is
well-understood:
We use results of Bayer--Macr\`i to observe that there is exactly one other birational model $X'$ of $X$
which is an irreducible symplectic variety. Therefore, $X$ and $X'$ are connected
by a chain of Mukai flops (by a theorem of Wierzba).
This enables us to prove base point freeness of all big and nef line bundles on $X$ and $X'$ except from the
line bundle $H+L\in \Pic(X)$ (compare   Proposition
\ref{prop:okinspeccase} and Proposition \ref{prop:mostlyokonX}; see page \pageref{notation} for the definition
of $H$ and $L$). 

The proof of generic base point freeness for all pairs $(d,m)\neq (3,2)$ (see Theorem \ref{thm:genbpf4}) is contained in Section \ref{sec:genbpf4}. It combines the analysis of the
special example with Apostolov's results on deformation equivalence of pairs of polarized irreducible
symplectic varieties of K3$^{[2]}$-type (see Proposition \ref{prop:candeform}). The case of $(d,m)=(3,2)$
is left out, since at this point the base locus of the line bundle $H+L$ with these invariants is not understood.

In Section \ref{sec:geometryEX}, we study the concrete geometry of the example $X$. This allows us to prove in Section
\ref{sec:2dimBL} that $H+L$  has indeed base points along a $\bP^2$ (Theorem \ref{thm:baselocusH+L}).
By analysing the natural scheme structure of the base locus of $H+L$ (in Section \ref{sec:genbpfforH+L}),
we can deduce that a generic deformation of 
$H+L$ is base point free (see Proposition \ref{prop:H+Lgenbpf}). This completes the proof of the generic base
point freeness for $(d,m)=(3,2)$ (Theorem
\ref{thm:genbpf!}).

\section*{Acknowledgements}
Most of this work was part of my dissertation. I would like to thank my supervisor Daniel Huybrechts for
his great supervision.
Further, I am grateful for staying at the ETH-ITS during the finalization of the article (supported by Dr. Max R\"ossler, the Walter Haefner Foundation and the ETH Z\"urich
Foundation).
I would like to thank Daniele Agostini for mentioning that certain line bundles occur as determinants of
tautological bundles.
Last but not least, I thank Michael and Florian for supporting me and my work.


\section*{General conventions}\label{sec:gennotation}
Throughout this article we work over the field $\bC$ of complex numbers. 
The convention for projective bundles is $\bP(F)\coloneqq \underline{\Proj}(\Sym (F\dual))$.

\bigskip

We say that a property holds for a ``generic'' element $t\in T$, if there exists a Zariski-open and dense
subset $U\subseteq T$ such that the property holds for all $t \in U$.

If a property holds for all $t \in T$ outside of a countable union of Zariski-closed (or closed analytic)
subsets of $T$, we say that the property is satisfied for a ``very general'' element.


\section{Definition and basic properties of irreducible symplectic varieties}\label{sec:defISV+basics}
This section contains the definition of irreducible symplectic varieties and some classical
facts about them.

\begin{definition}
  An {\it irreducible symplectic variety} is a simply connected, smooth, projective complex variety $X$ such
  that  $H^0(X,\Omega_X^2)$ is generated by a nowhere degenerate two-form. 
\end{definition}

Irreducible symplectic varieties are also known as ``projective hyperkähler manifolds'' and 
as ``irreducible holomorphic symplectic varieties''.
For an
overview on irreducible symplectic varieties, we refer to \cite[Part III]{Gross-Huybrechts-Joyce},
\cite{O'Grady:Intro}, and
\cite{Huybrechts:HK:basic-results}.

  The existence of a nowhere degenerate two-form implies that the dimension of an irreducible symplectic
  variety $X$ is even.

The $2n$-dimensional Hilbert schemes $S^{[n]}\coloneqq \Hilb^n(S)$ of $n$ points on  K3 surfaces $S$ are
irreducible symplectic varieties (see \cite{Beauville1983}). 
This article mostly deals with irreducible symplectic varieties of {\it K3$^{[n]}$-type} (i.e.~irreducible
symplectic varieties which are 
are deformation equivalent to such Hilb$^n(S)$).

\begin{remark}
  Since an irreducible symplectic variety is simply connected there is a canonical
  identification $\Pic(X)=\NS(X)$. Therefore, we will not distinguish between $\Pic(X)$ and $\NS(X)$ for an irreducible
  symplectic variety $X$.
\end{remark}

The following basic observation plays an important role, whenever one considers birational irreducible
symplectic varieties:

\begin{lemma}[{\cite[Section 2.2]{Huybrechts:birHK_deformations} + \cite[Lemma 2.6]{Huybrechts:HK:basic-results}}] \label{lem:biratisoonH2}
  Let $X$ and $X'$ be birational irreducible symplectic varieties. Then there exist closed subsets
  $Z\subset X$ and $Z'\subset X'$ which are of codimension at least two, such that
  $X\smallsetminus Z\iso  X'\smallsetminus Z'$.
  
  This induces isomorphisms $H^2(X,\bZ)\iso H^2(X',\bZ)$  and $\Pic(X)\iso \Pic(X')$.
  If $L\in \Pic(X)$ and
  $L'\in \Pic(X')$ are
  corresponding line bundles, there is an induced isomorphism on the level of global sections
  $H^0(X,L)\iso H^0(X',L')$.
\end{lemma}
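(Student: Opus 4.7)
The plan is to establish the three assertions in order, each essentially reducing to standard facts about codimension-two phenomena on smooth projective varieties with trivial canonical bundle.

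For the existence of the codimension-two isomorphism, I would start with a birational map $f\colon X \dashrightarrow X'$ and resolve its indeterminacy by a common smooth projective model $\widehat{X}$ with proper birational morphisms $p\colon \widehat{X}\to X$ and $p'\colon \widehat{X}\to X'$. The symplectic form on $X$ (respectively $X'$) trivializes $K_X$ and $K_{X'}$, so $p^\ast K_X \iso p'^\ast K_{X'} \iso \dO_{\widehat{X}}$. Now compare the two ways of writing $K_{\widehat{X}}$ as a $p$-exceptional, respectively $p'$-exceptional, divisor with non-negative multiplicities (by discrepancy considerations for smooth centres). The negativity lemma, applied to the fact that both representations agree, forces every $p$-exceptional prime divisor to be $p'$-exceptional and vice versa; combined with triviality of $K_X$ and $K_{X'}$, this forces the exceptional loci of $p$ and $p'$ to coincide and to be $p$-exceptional of discrepancy zero, hence empty in codimension one. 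Taking $Z \subseteq X$ and $Z' \subseteq X'$ to be the images of the exceptional loci then gives an isomorphism $X\smallsetminus Z \iso X'\smallsetminus Z'$ with $\codim Z, \codim Z' \geq 2$.

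For the isomorphism on second cohomology and on the Picard group, I would argue as follows. Since $Z \subseteq X$ has complex codimension at least two, real codimension at least four, the Gysin long exact sequence (or a transversality argument) yields $H^2(X,\bZ) \iso H^2(X\smallsetminus Z,\bZ)$; the same applies to $X'$. Composing with the isomorphism induced by $X\smallsetminus Z\iso X'\smallsetminus Z'$ gives $H^2(X,\bZ)\iso H^2(X',\bZ)$. For the Picard group, smoothness means Cartier and Weil divisors coincide on $X$ and $X'$; since any Weil divisor is determined by its restriction to the complement of a codimension-two subset, the restriction map $\Pic(X)\to \Pic(X\smallsetminus Z)$ is an isomorphism, and likewise for $X'$, so the identification on complements descends to $\Pic(X)\iso \Pic(X')$.

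For the comparison of global sections, let $L\in \Pic(X)$ and $L'\in \Pic(X')$ correspond under the above identification, so that $L|_{X\smallsetminus Z}$ is identified with $L'|_{X'\smallsetminus Z'}$ via the isomorphism $X\smallsetminus Z\iso X'\smallsetminus Z'$. Any section $s\in H^0(X,L)$ restricts to a section on $X\smallsetminus Z$; conversely, since $X$ is smooth and $Z$ has codimension at least two, Hartogs' theorem (equivalently, $j_\ast j^\ast L \iso L$ for the open immersion $j\colon X\smallsetminus Z \hookrightarrow X$, using normality of $X$) shows that every section over $X\smallsetminus Z$ extends uniquely to $X$. Thus $H^0(X,L) \iso H^0(X\smallsetminus Z, L|_{X\smallsetminus Z})$, and the analogous statement for $X'$ yields the desired isomorphism $H^0(X,L)\iso H^0(X',L')$.

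The main obstacle is the first assertion: the cleanest argument really is the discrepancy/negativity computation on a common resolution, and one has to be careful that both canonical bundles being trivial forces the common resolution to have all discrepancies equal to zero, so that neither $p$ nor $p'$ can contract a divisor. Once that is in place, the remaining two assertions are formal consequences of smoothness, normality and the codimension bound, and I would cite the references given in the statement for the standard write-up.
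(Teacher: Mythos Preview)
The paper does not give its own proof of this lemma; it is stated with references to Huybrechts and used as a black box throughout. Your sketch follows exactly the standard argument one finds in those sources, so there is no alternative approach to compare against.

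Your treatment of $H^2$, $\Pic$, and $H^0$ is correct and cleanly stated. The first part has the right strategy but one sentence is garbled. Once you write $K_{\widehat X}=\sum a_iE_i=\sum b_jF_j$ with the $E_i$ the $p$-exceptional and the $F_j$ the $p'$-exceptional prime divisors, all coefficients are \emph{strictly positive} (smooth varieties are terminal), so linear independence of distinct prime divisors already gives $\{E_i\}=\{F_j\}$; the negativity lemma is not needed. The clause ``$p$-exceptional of discrepancy zero, hence empty in codimension one'' is wrong as written: the discrepancies are $\geq 1$, and the exceptional divisors on $\widehat X$ are certainly not empty. What you should say instead is that, since $X$ and $X'$ are smooth, purity of the branch locus makes $\mathrm{Exc}(p)$ and $\mathrm{Exc}(p')$ pure of codimension one in $\widehat X$, hence they agree as closed subsets; taking $Z=p(\mathrm{Exc}(p))$ and $Z'=p'(\mathrm{Exc}(p'))$ (both of codimension $\geq 2$, since exceptional divisors are contracted) then yields the isomorphism $X\smallsetminus Z\iso X'\smallsetminus Z'$. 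With that sentence fixed, your argument is complete.
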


\begin{definition}\label{def:birKbar}
  Let $X$ be an irreducible symplectic variety of dimension $2n$.  Denote its Kähler cone by
  $\cK_X\subseteq H^{1,1}(X,\bR)$. Define its {\it birational Kähler cone} as
  \begin{equation*}
    \cB\cK_X\coloneqq  \bigcup_f f^* (\cK_{X'})\subseteq H^{1,1}(X,\bR),
  \end{equation*}
  where the union is taken over all birational maps $f\colon X\dashrightarrow X'$ from $X$ to another
  irreducible symplectic variety $X'$. Denote its closure by $\birKbar\hspace{-0.4em}_X\subseteq
  H^{1,1}(X,\bR)$.
\end{definition}
 Note that the pullback along $f\colon X\dashrightarrow X'$ is well-defined, since the
  indeterminacy locus is of codimension at least two (see Lemma \ref{lem:biratisoonH2}).

\subsection{Lattice structure on \texorpdfstring{$H^2(X,\bZ)$}{H2(X,Z)}}\label{sec:BBFform}
The second integral cohomology $H^2 (X,\bZ)$ of a hyperkähler manifold $X$ is endowed with a
quadratic form, called the Beauville--Bogomolov--Fujiki form (or  Beauville--Bogomolov form), which we denote by $q$.
For the definition and the proofs of the basic properties we refer again to \cite[Part III]{Gross-Huybrechts-Joyce}. 
Denote the associated bilinear form by $(\,,\,)_q$.

The Beauville--Bogomolov--Fujiki form is an integral and primitive quadratic form on $H^2(X,\bZ)$.
Note however, that the lattice $(H^2(X,\bZ),q)$ is not necessarily unimodular.

The signature of $q$ is $(3, b_2 - 3)$, where $b_2=\rank H^2(X,\bZ)$. Restricted to $H^{1,1}(X,\bR)$ the
signature of $q$ is $(1,b_2-3)$.

Therefore, one can define the {\it positive cone} $\cC_X \subseteq H^{1,1}(X,\bR)$ as the connected component of $\{\alpha \in H^{1,1}(X,\bR) \mid q(a)>0\}$
containing an ample class.


\subsection{Divisibility of elements in \texorpdfstring{$H^2(X,\bZ)$}{H2(X,Z)}}

Recall the following definitions for lattices:

\begin{definition}
  \begin{enumerate}
  \item An element $\alpha$ in a lattice $(\Lambda, q)$ is called {\it primitive}, if it is not a
    non-trivial multiple of another element, i.e.~$\alpha=k\cdot \alpha'$ for $k\in \bZ$ and $\alpha'\in
    \Lambda$ implies that $k=\pm 1$.
  \item    Let $\alpha \in \Lambda$ be an element in a lattice $(\Lambda, q)$. Its {\it divisibility}
    $\div(\alpha)$ is defined as the multiplicity of the element $(\_,\alpha)_q\in \Lambda\dual$,
    i.e.~$\div(A)=m$ if $q(\_,A)=m\cdot v$ for some primitive element $v \in \Lambda\dual$. Equivalently
    $\div(\alpha)=m$ if $m$ spans the ideal $(m)=\{(\beta,\alpha)_q\mid \beta\in \Lambda\} \subseteq
    \bZ$.
  \item Pick a $\bZ$-module basis $\{e_i\}$ of $\Lambda$. The {\it discriminant} of the lattice
    $(\Lambda, q)$ is defined as the determinant of the intersection matrix 
    $\big((e_i,e_j)_q\big)_{i,j}$.
  \end{enumerate}
\end{definition}

\begin{remark}\label{rem:div-discr}
  Note that the divisibility of a primitive element in an arbitrary lattice is always a divisor of its
  discriminant.
\end{remark}

\begin{definition} \label{def:divHK}
  For any element $\alpha \in H^2(X,\bZ)$ in the cohomology of an irreducible symplectic variety, we
  denote  by  $\div(\alpha)$ its divisibility with respect to the lattice $(H^2(X,\bZ),q)$.
  
  Note that in particular for $\alpha \in \Pic(X)\subseteq H^2(X,\bZ)$ we will still use $\div(\alpha)$
  for its divisibility in $H^2(X,\bZ)$. 
\end{definition}

\begin{remark}
  With this notation the divisibility of an element $\alpha \in \Pic(X)$ with respect to the lattice
  $\Pic(X)$ can be bigger than $\div(\alpha)$:
  For instance for a K3 surface $X$ every primitive element in $H^2(X,\bZ)$ has divisibility one, since
  the K3-lattice is unimodular (see e.g.~\cite[Section I.3.3]{HuybrechtsK3}).
  On the other hand, if the K3 surface satisfies $\Pic(X)\iso \bZ\cdot H$ for some line bundle $H$, then $q(H)$ is
  bigger than one (since $H^2(X,\bZ)$ is an even lattice) and thus the divisibility of $H$
  with respect to $\Pic(X)$ would be $q(H)>1$.

  This convention for $\div(\alpha)$ is convenient, because in this way $\div(\alpha)$ is invariant under
  deformation of $X$.
\end{remark}

\subsection{The transcendental lattice}

One defines the transcendental lattice for an irreducible symplectic variety in the following way:
\begin{definition}\label{def:trlattice}
  Let $X$ be an irreducible symplectic variety. Then the transcendental lattice
  $H^2(X,\bZ)_{\tr}\coloneqq T
  \subseteq H^2(X,\bZ)$ is 
  the primitive sublattice, which supports the minimal primitive integral sub-Hodge structure $T$, such that
  $T^{2,0}=H^{2,0}(X)$.
\end{definition}

\begin{lemma}
  With this notation
  \begin{equation*}
    H^2(X,\bZ)_{\tr} = \NS(X)^\perp \subseteq H^2(X,\bZ),
  \end{equation*}
where the orthogonal complement is taken with respect to the Beauville--Bogomolov--Fujiki form.
\end{lemma}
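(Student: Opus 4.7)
The plan is to verify two inclusions, exploiting the fact that the Beauville--Bogomolov--Fujiki form $q$ is a morphism of weight-two Hodge structures $q\colon H^2(X,\bZ)\otimes H^2(X,\bZ)\to \bZ(-2)$. This compatibility is the backbone of the argument and yields the vanishings $q(H^{2,0},H^{2,0})=0$, $q(H^{2,0},H^{1,1})=0$ (and their conjugates), so that the only nontrivial pairing involving $H^{2,0}$ is with $H^{0,2}$, and the induced pairing $H^{2,0}\times H^{0,2}\to \bC$ is non-degenerate (this uses $\dim H^{2,0}=1$ together with the fact that $q$ restricted to $H^{2,0}\oplus H^{0,2}$ cannot be degenerate, as this would spoil the signature computation).

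First, I would show that $\NS(X)^\perp$ is a primitive integral sub-Hodge structure containing $H^{2,0}$. Primitivity is automatic, since the orthogonal complement of any sublattice is primitive. Since $\NS(X)\subseteq H^{1,1}(X)$, and since $H^{2,0}$ is $q$-orthogonal to $H^{1,1}$, we obtain $H^{2,0}\subseteq (\NS(X)\otimes\bC)^\perp$, and by conjugation $H^{0,2}\subseteq (\NS(X)\otimes\bC)^\perp$. Hence $(\NS(X)^\perp)\otimes\bC$ decomposes as $H^{2,0}\oplus \big((\NS(X)^\perp\otimes\bC)\cap H^{1,1}\big)\oplus H^{0,2}$, which shows it is a sub-Hodge structure. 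By minimality of $T$, it follows that $T\subseteq \NS(X)^\perp$.

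For the reverse inclusion, I would instead show $T^\perp\subseteq \NS(X)$ and then take orthogonal complements. Since $T$ is a sub-Hodge structure containing $H^{2,0}$, its complexification contains $H^{2,0}\oplus H^{0,2}$. Given $\alpha\in T^\perp\otimes\bC$, decompose $\alpha=\alpha^{2,0}+\alpha^{1,1}+\alpha^{0,2}$. The pairing $q(\alpha,\sigma)=0$ for all $\sigma\in H^{2,0}$ forces $\alpha^{0,2}=0$ by the non-degeneracy of $q$ on $H^{2,0}\oplus H^{0,2}$ (together with $\dim H^{2,0}=1$), and symmetrically $q(\alpha,\bar\sigma)=0$ forces $\alpha^{2,0}=0$. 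Thus $T^\perp\otimes\bC\subseteq H^{1,1}$, and since $T^\perp\subseteq H^2(X,\bZ)$, we obtain $T^\perp\subseteq H^{1,1}(X)\cap H^2(X,\bZ)=\NS(X)$.

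To conclude, I would use that $T$ is primitive and that $q$ is non-degenerate on $H^2(X,\bQ)$, so $(T^\perp)^\perp=T$. Taking orthogonal complements in $T^\perp\subseteq \NS(X)$ then gives $\NS(X)^\perp\subseteq T$, completing the proof. The main subtlety will be keeping track of the Hodge-theoretic compatibility of $q$; once that is set up, the argument is essentially the same as the classical one for K3 surfaces, with the intersection pairing replaced by the BBF form.
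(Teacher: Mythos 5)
Your proposal is correct and takes essentially the same route as the paper: the paper just records the two key inputs (that $H^{1,1}(X,\bC)$ is $q$-orthogonal to $H^{2,0}(X)\oplus H^{0,2}(X)$ and that $q$ is non-degenerate of signature $(3,b_2-3)$) and then invokes the classical two-inclusion argument for K3 surfaces, which is exactly what you have written out in detail with the BBF form in place of the intersection form.
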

\begin{proof}
  Since $H^{1,1}(X,\bC)$ is orthogonal to $H^{2,0}(X) \oplus H^{0,2}(X)$ with respect to the
  Beauville--Bogomolov--Fujiki form on $H^2(X,\bC)$ (which follows immediately from the definition of
  $q$) and the signature of $H^2(X,\bZ)$ is $(3, b_2-3)$, the same arguments as in 
  \cite[Lemma III.3.1]{HuybrechtsK3} apply.
\end{proof}

\begin{remark}\label{rem:type-argument}
Note that obviously this implies that  $(\alpha,W)_q=0$ for all $\alpha \in H^2(X,\bZ)_{\rm tr}$ and $W\in H^{1,1}(X,\bZ)$.
 On the other hand, if $W \in H^2(X,\bZ)$ is not contained in $H^{1,1}(X,\bZ)$, then there exists an element $\alpha
    \in H^2(X,\bZ)_{\rm tr}$ with $(\alpha,W)_q\neq 0$ (this also follows from the proof of \cite[Lemma III.3.1]{HuybrechtsK3}).

 Furthermore, for a curve $C\subseteq X$, one knows that an element $\sigma \in H^{2,0}(X,\bC)$ satisfies
 $\deg(\sigma|_C)=\int_C \sigma|_C =0$ for type reasons. Since $\deg(\_|_C)$ can be seen as morphism of Hodge
 structures, this implies that $\deg(\beta|_C)=0$ for all $\beta \in H^2(X,\bZ)_{\tr}$.
\end{remark}

\subsection{Mukai flops}\label{ssec:Mukaiflops}
In order to fix the notation and for the convenience of the reader, we present the construction of general Mukai flops as in \cite[§3]{Mukai}, without
giving the proofs of the statements. 

Let $X$ be an irreducible symplectic variety. Suppose $P\subseteq X$ is a closed subvariety of
codimension $r$ in $X$, which is isomorphic to a $\bP^r$-bundle $\pi\colon P\to B$.

Using the non-degenerate holomorphic symplectic two-form $\sigma \in H^0(X,\Omega^2_X)$, one
can show that 
there is an isomorphism
$N_{P|X}\iso \Omega_{P|B}$.
Let $\phi\colon \Xhat \to X$ be the blow-up of $X$ in $P$, and $\phi \colon P'\to B$
 be the dual projective bundle to $P$. Then the exceptional divisor is isomorphic to the coincidence
 variety in $P\times_B P'$:
\begin{equation}\label{eq:EisoZ}
E\iso \bP(N_{P|X})\iso \bP(\Omega_{P|B}) \iso Z\coloneqq \{(p,p') \mid p \in p'\} \subseteq P\times_B
P'. 
\end{equation}

One can observe that
\begin{lemma}\label{lem:MukaiflopOEi}  
   The normal bundle $N_{E|\Xhat}\iso\dO(E)|_E\in \Pic(E)$ corresponds to the line bundle
$\dO(-1,-1)\coloneqq \pr_1^*\dO(-1)\otimes \pr_2^*\dO(-1)\in \Pic(Z)$ via the isomorphism \eqref{eq:EisoZ}.
\end{lemma}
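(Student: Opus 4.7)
My plan is to derive the statement from the standard description of the normal bundle of an exceptional divisor of a blow-up, combined with a direct computation using the relative Euler sequence to identify the tautological line bundle on $\bP(\Omega_{P|B})$ with $\dO(-1,-1)|_Z$.

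First, I would use the universal property of the blow-up $\phi\colon \Xhat \to X$ of $X$ along $P$: since $P$ is smooth in the smooth ambient $X$, the exceptional divisor is $E\iso \bP(N_{P|X})$, and the normal bundle $N_{E|\Xhat}\iso \dO_{\Xhat}(E)|_E$ is precisely the tautological line bundle $\dO_{\bP(N_{P|X})}(-1)$. Composing with the symplectic isomorphism $N_{P|X}\iso \Omega_{P|B}$, this identifies $N_{E|\Xhat}$ with the tautological line bundle $\dO_{\bP(\Omega_{P|B})}(-1)$. The remaining task is to show that under the isomorphism \eqref{eq:EisoZ}, this tautological line bundle matches $\dO(-1,-1)|_Z$.

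To do this, write $\pi\colon P=\bP(F)\to B$ for the rank $r{+}1$ bundle $F$ with $P'=\bP(F\dual)$. The relative Euler sequence reads
\begin{equation*}
  0\too \Omega_{P|B}\too \pi^*F\dual\otimes \dO_P(-1)\too \dO_P\too 0.
\end{equation*}
The inclusion on the left induces a closed embedding
$\bP(\Omega_{P|B})\inj \bP(\pi^*F\dual\otimes \dO_P(-1))$, and because twisting a vector bundle by a line bundle does not change the associated projective bundle, the target is canonically isomorphic to $\bP(\pi^*F\dual)=P\times_B P'$. A fiberwise check (a line in $\Omega_{P|B,p}=L_p\otimes (F_b/L_p)\dual$ over $p=[L_p]$ corresponds to a hyperplane of $F_b$ containing $L_p$, i.e.~to a point $p'\in P'$ with $p\in p'$) identifies the image with the incidence variety $Z$, recovering \eqref{eq:EisoZ}.

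Finally I would compute the tautological line bundle under this identification. Using the standard formula $\dO_{\bP(E\otimes L)}(-1)\iso \dO_{\bP(E)}(-1)\otimes \pi_E^*L$, we find
\begin{equation*}
  \dO_{\bP(\pi^*F\dual\otimes \dO_P(-1))}(-1)\iso \dO_{\bP(\pi^*F\dual)}(-1)\otimes \pr_1^*\dO_P(-1)\iso \pr_2^*\dO_{P'}(-1)\otimes \pr_1^*\dO_P(-1)=\dO(-1,-1).
\end{equation*}
Since the tautological line bundle of a sub-projective-bundle is the restriction of the tautological line bundle of the ambient projective bundle, restricting to $\bP(\Omega_{P|B})\iso Z$ yields $\dO_{\bP(\Omega_{P|B})}(-1)\iso \dO(-1,-1)|_Z$, completing the identification of $N_{E|\Xhat}$. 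The main obstacle is purely bookkeeping: keeping the Grothendieck/classical convention $\bP(F)=\PProj(\Sym F\dual)$ consistent throughout, since several of the identifications differ by a twist or a dualization that has to be tracked carefully.
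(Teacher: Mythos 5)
Your argument is correct. Note that the paper itself contains no proof of this lemma: Section 2.5 reproduces the Mukai-flop construction from Mukai's paper explicitly ``without giving the proofs of the statements'', so there is no in-paper argument to compare with, and your write-up supplies the standard one implicit in that reference. The three ingredients you use are all valid with the paper's convention $\bP(F)=\underline{\Proj}(\Sym(F\dual))$: the identification $\dO_{\Xhat}(E)|_E\iso \dO_{\bP(N_{P|X})}(-1)$ for the blow-up of a smooth center, the relative Euler sequence $0\to \Omega_{P|B}\to \pi^*F\dual\otimes\dO_P(-1)\to\dO_P\to 0$ inducing a closed embedding of $\bP(\Omega_{P|B})$ into $\bP(\pi^*F\dual\otimes\dO_P(-1))\iso P\times_B P'$ with image the incidence variety, and the twist formula for the tautological bundle under $\bP(\cE\otimes M)\iso\bP(\cE)$ together with the fact that the tautological bundle of a sub-projective-bundle is the restriction of the ambient one. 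Two minor remarks: your proof tacitly assumes $P\iso\bP(F)$ for an honest vector bundle $F$ on $B$ (otherwise $\dO_P(-1)$ need not exist globally); this is already implicit in the statement's use of $\dO(-1,-1)$ and is vacuous in the paper's application, where $B$ is a point and $P\iso\bP^2$. Also, the paper does not pin down the isomorphism \eqref{eq:EisoZ}, but yours is the standard one coming from the Euler sequence, and since the answer $\dO(-1,-1)$ is symmetric in the two factors, the conclusion does not depend on this choice.
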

Consequently, the restriction of $N_{E|\Xhat}$  to the fibres of the second projection
$\pr_2\colon Z\to P'$ is $\dO(-1)$.
Therefore, one can apply 
\cite[Corollary 6.11]{Artin:blow-down} to see that there exists an algebraic space $X'$ which is the
blow-down $\phi'\colon \Xhat \to X'$ of $\Xhat$ along $E\iso Z\to P'$.

This induces a diagram of the following form:
\begin{equation*}
\begin{tikzcd}[row sep=small, column sep=tiny]
  && & & E \ar[hook]{dd} \ar{dddllll}[swap]{\eta} \ar{dddrrrr}{\eta'} & & && \\
  &&&&&&&& \\
  && & & \Xh \ar{dl}[swap]{\phi} \ar{dr}{\phi'} & & && \\
 P \ar[hook]{rrr}[swap]{} && &X \ar[dashed]{rr}{f} & 
    &X'\ar[hookleftarrow]{rrr}[swap]{} & &&P'\, ,
\end{tikzcd}
\end{equation*}
where $\Xhat$ is at the same time the blow-up of $X$ in $P$ and the blow-up of $X'$ in $P'$, and
similarly $E$ is at the same time the exceptional divisor for $\phi$ and $\phi'$.
The two maps $\eta$ and $\eta'$ correspond to the two projections on $Z$ via the isomorphism \eqref{eq:EisoZ}.

\begin{definition}
  If $X'$ is a projective variety, then the map $f\colon X\dashrightarrow X'$ is called a {\it general
    Mukai flop}.
  
  In the special case, where $P\iso\bP^n$, and $n= \half \dim (X)$, the map $f$ is called {\it
    elementary Mukai flop} or simply {\it Mukai flop}.
\end{definition}

\begin{remark}\label{rem:omegaofMukaihat}
  The canonical bundle of  $\Xh$  is
  \begin{equation*}
  \omega_{\Xh}=(r-1)E \in \Pic(\Xh).
  \end{equation*}
  See e.g.~{\cite[Exercise II.8.5.(b)]{Hartshorne}}.
\end{remark}

\begin{theorem}[{\cite[Proposition 2.1]{Wierzba}}, see also \cite{Hu-Yau:birational}]\label{thm:Wierzba}
  Let $X$ and $X'$ be birational  irreducible symplectic (projective) 4-folds whose nef cones
  $\Nef(X)\subseteq H^2(X,\bR)$ and
  $f^*(\Nef(X'))\subseteq  H^2(X,\bR)$ are separated by a single wall. 
Denote the indeterminacy locus of $f\colon X\dashrightarrow X'$ by $P$.
Then
\begin{enumerate}
\item the set $P$ is a disjoint union of finitely many copies $P_i$ of $\bP^2$, and
\item the birational map $f \colon X\dashrightarrow X'$ is the Mukai flop in the union of the $P_i$.
\item \label{it:ray}
  Define the extremal ray $R_f\subseteq \NEbar(X)\subseteq N_1(X)$ associated to $f$ in the
  following way: The ray $R_f$ is the unique ray which is orthogonal to the wall separating $\Nef(X)$ and $f^*(\Nef(X'))$. Then 
  \begin{equation*}
    P=\bigcup_{[C]\in R_f} C.
  \end{equation*}
\end{enumerate}
\end{theorem}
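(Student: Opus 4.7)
The plan is to combine Kawamata's base-point-free theorem with the classification of symplectic contractions on $4$-folds. Because $\Nef(X)$ and $f^*\Nef(X')$ are separated by a single wall, they share a common codimension-one face $F$, and any class $[D]$ in the relative interior of $F$ is big and nef. Since $X$ is irreducible symplectic (in particular has trivial canonical bundle), Kawamata's theorem gives that some multiple $mD$ is globally generated and defines a projective birational contraction $\pi\colon X\to Y$. The curves contracted by $\pi$ are precisely those $C\subseteq X$ with $(D.C)=0$, and these span the extremal ray $R_f\subseteq\NEbar(X)$ dual to the separating wall. The analogous contraction $\pi'\colon X'\to Y$ arises from $f^*D$, and $f=\pi'^{-1}\circ\pi$ is an isomorphism away from the exceptional locus $E(\pi)$. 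This identifies the indeterminacy locus $P$ with $E(\pi)=\bigcup_{[C]\in R_f}C$, proving (\ref{it:ray}).

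The structural statement (a) requires a classification of the exceptional locus. By Lemma \ref{lem:biratisoonH2}, $f$ is an isomorphism in codimension one, so $\pi$ is a \emph{small} contraction, i.e.\ every component $P_i$ of $P$ has codimension $\geq 2$ and thus $\dim P_i\leq 2$. I would then use the symplectic form $\sigma\in H^0(X,\Omega_X^2)$: restricting $\sigma$ to a general fibre $F_i\subseteq P_i$ of $\pi$ vanishes by type, since $F_i$ is uniruled and rational curves carry no global $2$-forms. Combined with the nondegeneracy of $\sigma$, this forces the tangent space of $F_i$ to lie in its own $\sigma$-orthogonal complement, so $P_i$ is generically Lagrangian and $\dim P_i=2$. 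The Mori-theoretic classification of extremal contractions whose exceptional locus is a Lagrangian surface in a smooth projective $4$-fold---using that extremality forces the contracted curves to form a single irreducible family with one numerical class---then pins down each $P_i$ as $\bP^2$, contracted by $\pi$ to a point, with the contracted curves being the lines of $\bP^2$.

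Once $P=\bigsqcup P_i$ with $P_i\cong \bP^2$ is established, (b) follows from the explicit Mukai construction recalled in Section \ref{ssec:Mukaiflops}: the blow-up $\phi\colon\Xh\to X$ along $P$ has exceptional divisor $E$ which over each $P_i$ equals $\bP(N_{P_i|X})\cong \bP(\Omega_{\bP^2})$, the incidence variety inside $\bP^2\times \Pdual$. By Lemma \ref{lem:MukaiflopOEi}, $\dO(E)|_E$ restricts to $\dO(-1)$ on the fibres of the projection onto the second factor, so $\Xh$ admits a second contraction $\phi'\colon\Xh\to X''$ along this ruling. The resulting irreducible symplectic $4$-fold $X''$ is birational to $X$ with nef cone on the opposite side of the separating wall, and by the single-wall hypothesis must coincide with $X'$, so $f$ is the Mukai flop in $P$. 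The principal difficulty is the middle paragraph: upgrading each component of the exceptional locus from a generically Lagrangian surface to an honest copy of $\bP^2$ is the essential content of the theorem and relies on the fine classification of contractions on symplectic $4$-folds.
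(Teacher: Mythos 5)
There is a genuine gap, and it sits exactly where you flag it yourself: the middle paragraph. The assertion that ``the Mori-theoretic classification of extremal contractions whose exceptional locus is a Lagrangian surface in a smooth projective $4$-fold \dots pins down each $P_i$ as $\bP^2$'' is not a known elementary fact you can wave at --- it \emph{is} the cited theorem of Wierzba (Proposition 2.1, see also Hu--Yau). Your intermediate claim that ``extremality forces the contracted curves to form a single irreducible family with one numerical class'' is unjustified and is itself part of the hard analysis; nothing in your sketch rules out, say, a non-normal or non-rational Lagrangian surface component, or several distinct families of contracted curves with proportional classes, and getting from ``generically Lagrangian surface swept by rational curves'' to ``$\bP^2$ contracted to a point, with normal bundle $\Omega_{\bP^2}$'' is precisely the content one must either prove (a substantial argument in Wierzba's and Wierzba--Wi\'sniewski's work) or cite. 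The paper does the latter: its entire proof consists of verifying the hypotheses of Wierzba's Proposition 2.1, namely that $P$ has codimension at least two (Lemma \ref{lem:biratisoonH2}), that $R_f$ is an extremal ray because it is orthogonal to a wall of $\Nef(X)$, and that there is an effective divisor $D$ (a pullback of a very ample divisor from $X'$) with $(D,R_f)<0$ and $(X,\eps D)$ klt for $0<\eps\ll 1$; parts (a), (b), (c) then follow directly from the citation. Your first paragraph (Kawamata base-point-freeness applied to a class on the wall, identification of the contracted curves with $R_f$, and $P=\bigcup_{[C]\in R_f}C$) and your last paragraph (the Mukai construction once $P\cong\bigsqcup\bP^2$ is known) are sound in outline, but they only reproduce the easy bookends around the unproven core.

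Two smaller points if you want to keep your first paragraph as a supplement. First, ``any class in the relative interior of $F$ is big'' is not quite right: a nef class $D$ on a symplectic $4$-fold is big iff $q(D)>0$ (Fujiki), and the wall $W^\perp$ can meet the quadric $\{q=0\}$ inside its relative interior; you should take a \emph{general rational} class on the wall, where $q(D)>0$, and rationality is also needed to speak of a multiple $mD$ in the base-point-free theorem. Second, identifying the contraction of $X'$ induced by $f^*D$ with a morphism to the \emph{same} $Y$, and later identifying the flopped space $X''$ with $X'$, both need a word (sections transfer through the codimension-two indeterminacy by Lemma \ref{lem:biratisoonH2}, and a birational map of irreducible symplectic varieties matching ample classes is an isomorphism); these are standard but glossed over.
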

\begin{proof}
  If we show that the associated ray $R_f$ is $(X,\eps D)$-negative extremal ray of $\NEbar(X)$ for some
  effective divisor $D$, and $\eps$ such that $(X,\eps D)$ is a klt pair, this follows directly from the statement
  of \cite[Proposition 2.1]{Wierzba} (note that $P$ has automatically codimension at least two by Lemma
  \ref{lem:biratisoonH2}). 

  First, recall that for all effective divisors $(X,\eps D)$ is klt for  $0<\eps\ll 1$.
  Furthermore, since $R_f$ is orthogonal to a wall of  $\Nef(X)$, it is automatically an
  extremal ray. 

  It therefore suffices to find an effective divisor $D\in \Pic(X)$ such that $D$ is negative on $R_f$.
  Since $R_f$ is orthogonal to the wall separating $\Nef(X)$ and $f^*(\Nef(X'))$ there is a rational element $D\in f^*(\Amp(X'))$
  with $(D,R_f)<0$. Then a multiple of $D$ corresponds to a very ample line bundle, which concludes the proof.
\end{proof}

\subsection{The \texorpdfstring{K3$^{[n]}$-type}{K3n-type}} \label{ssec:K3ntype}
We collect several standard facts about irreducible symplectic varieties of
K3$^{[n]}$-type, which we need later.

Fix the notation $\LambdaKE\iso U^{\oplus 3}\oplus E_8(-1)^{\oplus 2}$ for the K3 lattice, where $U$ is
the standard hyperbolic lattice with matrix 
$\left(\begin{smallmatrix}
0&1 \\ 1&0
\end{smallmatrix} \right)$,  and $E_8(-1)$
is the unimodular root-lattice $E_8$ changed by sign.

\begin{proposition}\label{prop:standard-decomposition-K3n}
  Fix a K3 surface $S$. Then it is known that:
  \begin{enumerate}
  \item For every $n\geq 2$, there is an orthogonal decomposition of lattices (preserving the natural
    Hodge structures on both sides):\label{it:H2K3n}
    \begin{equation*}
      H^2(\Hilb^n(S),\bZ)\iso H^2(S,\bZ)\oplus \bZ\cdot \delta,
    \end{equation*}
    where $\delta$ is an integral $(1,1)$-class with $q(\delta)=-2(n-1)$. The class $2\delta$ is represented by
    the Hilbert--Chow divisor.
  \item There is an orthogonal decomposition: \label{it:PicK3n}
    \begin{equation*}
      \Pic(\Hilb^n(S))\iso \Pic(S) \oplus \bZ \cdot \delta.
    \end{equation*}
  \item In particular for each irreducible symplectic variety $X$ of K3$^{[n]}$-type, there exists an
    isomorphism \label{it:H2K3ntyp}
    $H^2(X,\bZ)\iso \LambdaKE \oplus\, \bZ \delta$.
  \end{enumerate}
\end{proposition}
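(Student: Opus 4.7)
The plan is to follow Beauville's classical approach (from the 1983 paper on Kähler manifolds with vanishing first Chern class), which remains the standard route. The three parts are tightly linked: part (a) does the real work, parts (b) and (c) are essentially corollaries via Hodge theory and deformation invariance.

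For (a), I would use the Hilbert--Chow morphism $\mu\colon \Hilb^n(S)\to S^{(n)}\coloneqq S^n/\fS_n$, which is a crepant resolution of singularities whose exceptional divisor $E$ parametrizes non-reduced length-$n$ subschemes. The key technical input is that $E$ is $2$-divisible in $\Pic(\Hilb^n(S))$, so that there exists an integral class $\delta\in H^2(\Hilb^n(S),\bZ)$ with $2\delta=[E]$. The map from $H^2(S,\bZ)$ into $H^2(\Hilb^n(S),\bZ)$ is constructed via the universal family $\Xi_n\subseteq \Hilb^n(S)\times S$: sending $\alpha\in H^2(S,\bZ)$ to $(\pr_1)_*(\pr_2^*\alpha\cap [\Xi_n])$ (up to a factor), or equivalently by restricting symmetric classes from $S^{(n)}$ and pulling back via $\mu$. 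To prove it is an isomorphism onto its image and that the sum with $\bZ\cdot\delta$ exhausts $H^2(\Hilb^n(S),\bZ)$, I would combine (i) a transfer argument showing $H^2(S^{(n)},\bQ)\iso H^2(S,\bQ)^{\fS_n}\iso H^2(S,\bQ)$, (ii) the fact that $\mu$ is an isomorphism in codimension one outside $E$, so $H^2(\Hilb^n(S),\bZ)$ differs from $\mu^*H^2(S^{(n)},\bZ)$ only by the class of the exceptional divisor, and (iii) integrality of $\delta$ itself. Orthogonality of the decomposition with respect to $q$ and the value $q(\delta)=-2(n-1)$ would be checked by the Fujiki-type formula, evaluating $\int_{\Hilb^n(S)} \alpha^{2n}$ for $\alpha=\beta+t\delta$ against known values and isolating the $t^2$-coefficient; alternatively one computes directly using test curves like the generic fibres of $E\to S^{(n)}_{\mathrm{sing}}$.

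For (b), I would observe that the natural map $H^2(S,\bZ)\to H^2(\Hilb^n(S),\bZ)$ constructed above is a morphism of pure weight-two Hodge structures, as can be seen from its description in terms of the algebraic correspondence $\Xi_n$. In particular $H^{2,0}(\Hilb^n(S))\iso H^{2,0}(S)$ and $\delta$ is a $(1,1)$-class (it is represented by an effective divisor). Intersecting the decomposition from (a) with $H^{1,1}(\_,\bZ)=\NS(\_)=\Pic(\_)$ then gives the orthogonal splitting $\Pic(\Hilb^n(S))\iso \Pic(S)\oplus \bZ\cdot \delta$.

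For (c), I would invoke the fact that the second integral cohomology together with the Beauville--Bogomolov--Fujiki form is a diffeomorphism (hence deformation) invariant of an irreducible symplectic variety. Since every K3$^{[n]}$-type variety is by definition deformation equivalent to some $\Hilb^n(S)$, and since $H^2(S,\bZ)\iso \LambdaKE\iso U^{\oplus 3}\oplus E_8(-1)^{\oplus 2}$, part (a) transports to the abstract isometry $H^2(X,\bZ)\iso \LambdaKE\oplus \bZ\delta$. The main obstacle throughout is really just the integrality and $2$-divisibility of the exceptional divisor needed to define $\delta\in H^2(\Hilb^n(S),\bZ)$; everything else follows from standard transfer, correspondence and Hodge-theoretic arguments once this is in place.
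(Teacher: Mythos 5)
Your sketch follows exactly the route the paper relies on: the paper does not reprove these facts but cites Beauville's 1983 paper (Proposition 6 and the remark following it) for parts (a) and (b), and obtains (c) from (a) together with $H^2(S,\bZ)\iso\LambdaKE$ and the deformation invariance of $(H^2(X,\bZ),q)$ --- which is precisely the argument you reconstruct (Hilbert--Chow resolution, $2$-divisibility of the exceptional divisor, the correspondence via the universal family, and the computation $q(\delta)=-2(n-1)$). So your proposal is correct and essentially the same approach as the paper's (i.e.\ Beauville's original proof).
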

Part \ref{it:H2K3n} and \ref{it:PicK3n} follow from \cite[Proposition 6 + Remark]{Beauville1983}. Part
\ref{it:H2K3ntyp} follows from \ref{it:H2K3n}, 
since for a K3 surface $S$ there is an isomorphism $H^2(S,\bZ)\iso \LambdaKE$ (see \cite[Proposition 3.5]{HuybrechtsK3}).

\begin{definition}
  Fix the notation $\LambdaKEn\coloneqq \LambdaKE \oplus \,\bZ\cdot \delta$ with $q(\delta)=-2(n-1)$, for
  the lattice associated to the second cohomology of an irreducible symplectic variety of K3$^{[n]}$-type.
\end{definition}

\begin{remark}\label{rem:discK3n}
  Since the K3 lattice is unimodular, the discriminant of the lattice
  $\LambdaKEn$ is $2(n-1)$.
\end{remark}

\begin{remark}\label{rem:computediv}
  Proposition \ref{prop:standard-decomposition-K3n} shows that every element in $\alpha \in
  H^2(\Hilb^n(S),\bZ)$ can be expressed by $\alpha=a\lambda+b\delta$ for a primitive element $\lambda\in
  H^2(S,\bZ)$ and some $a,b \in \bZ$.  Since $\LambdaKE$ is a unimodular lattice, this implies that the
  divisibility of $\alpha$ (compare Definition \ref{def:divHK}) is
  \begin{equation*}
    \div(\alpha)= \gcd(a,2b(n-1)).
  \end{equation*}
\end{remark}

\begin{proposition}[{Riemann--Roch for K3$^{[n]}$-type \cite[p.\,188]{Gross-Huybrechts-Joyce}}]\label{prop:EGL}
    Let $X$ be of K3$^{[n]}$-type, and $L \in \Pic(X)$ be a line bundle. Then
  \begin{equation*}
    \chi(X,L)=\bigg(\twolines{\frac{1}{2}q(L)+n+1}{n}\bigg).
  \end{equation*}
\end{proposition}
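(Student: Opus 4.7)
The plan is to show first that $\chi(X,L)$ is a universal polynomial of degree $n$ in $q(L)$ depending only on the deformation type, and then to pin down its coefficients on an explicit K3$^{[n]}$ example. I would start with Hirzebruch--Riemann--Roch
\[
\chi(X,L) \;=\; \int_X \exp(c_1(L))\,\td(X).
\]
Because $X$ is holomorphic symplectic, the odd Chern classes of $T_X$ vanish, so $\td(X)$ has components only in degrees divisible by four, giving
\[
\chi(X,L) \;=\; \sum_{k=0}^{n} \frac{1}{(2k)!}\int_X c_1(L)^{2k}\,\td_{4n-4k}(X).
\]
Fujiki's generalized relation asserts that for any algebraic class $\gamma\in H^{4n-4k}(X,\bQ)$ one has $\int_X \alpha^{2k}\gamma = C_\gamma\,q(\alpha)^k$ with a constant $C_\gamma$ independent of $\alpha$. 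Applied to $\gamma=\td_{4n-4k}(X)$, this shows that $\chi(X,L)$ is a polynomial of degree $n$ in $q(L)$, whose coefficients depend only on the deformation class of $X$, and hence only on $n$ within K3$^{[n]}$-type.

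Next, I would evaluate this universal polynomial on the reference model $X=S^{[n]}$. Given $M\in\Pic(S)$, the induced line bundle $M_{(n)}\in\Pic(S^{[n]})$ (descend $M^{\boxtimes n}$ along $S^n\to S^{(n)}$ and pull back via the Hilbert--Chow morphism) corresponds to $M\in H^2(S,\bZ)$ under Proposition \ref{prop:standard-decomposition-K3n}, so $q(M_{(n)})=(M,M)_S$. Combining G\"ottsche's classical formula
\[
\chi(S^{[n]},M_{(n)}) \;=\; \binom{\chi(S,M)+n-1}{n}
\]
with Riemann--Roch on the K3 surface, $\chi(S,M) = (M,M)_S/2 + 2$, yields
\[
\chi(S^{[n]},M_{(n)}) \;=\; \binom{\tfrac{1}{2}q(M_{(n)})+n+1}{n}.
\]
As $(M,M)_S$ ranges over infinitely many even integers when $(S,M)$ varies, two polynomials of degree $n$ in $q$ which agree on these values must coincide, and the formula extends to every K3$^{[n]}$-type $X$ and every $L\in\Pic(X)$.

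The main obstacle is the auxiliary identity for $\chi(S^{[n]},M_{(n)})$: the Fujiki-based reduction is essentially formal, but this combinatorial formula on the Hilbert scheme is the genuine geometric input. A natural route is to verify $H^0(S^{[n]},M_{(n)}) \cong \Sym^n H^0(S,M)$ for sufficiently positive $M$ (via $S_n$-invariants on $S^n$ and the resolution $S^{[n]}\to S^{(n)}$), establish vanishing of higher cohomology in that range by a Kodaira-type argument, and then spread to all line bundles using the polynomiality in $q$ already established.
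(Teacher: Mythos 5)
The paper itself gives no proof of this proposition: it is quoted from the cited reference as a known result (the Ellingsrud--G\"ottsche--Lehn computation), so there is no internal argument to compare with. Your proposal is essentially the standard proof from the literature and is sound: Hirzebruch--Riemann--Roch plus vanishing of the odd Chern classes reduces $\chi(X,L)$ to the pairings $\int_X c_1(L)^{2k}\td_{4n-4k}(X)$, the generalized Fujiki relation turns this into a universal polynomial of degree $n$ in $q(L)$ with deformation-invariant coefficients, and evaluation on $S^{[n]}$ with line bundles induced from $S$ pins the polynomial down at infinitely many values of $q$.

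Two points deserve care. First, your statement of the generalized Fujiki relation is too strong: it does not hold for ``any algebraic class'' $\gamma$ on a fixed $X$, but for classes that remain of type $(2n-2k,2n-2k)$ under all deformations of $X$ (for instance polynomials in the Chern classes of $X$). For a class special to the given $X$ it fails; e.g.\ for $n=2$ and $\gamma$ the class of a Lagrangian $\bP^2\subset X$, the form $\alpha\mapsto\int_X\alpha^2\gamma$ is the square of a linear form and not a multiple of $q(\alpha)$. Since you only apply the relation to the Todd components this does not damage the proof, but the hypothesis should be stated correctly; the deformation invariance of these Fujiki constants is also exactly what justifies ``depend only on the deformation class.'' Second, in your closing sketch of the input formula on $S^{[n]}$: for ample $M$ one has $H^0(S^{[n]},M_{(n)})\iso\Sym^n H^0(S,M)$ via the symmetric product, but $M_{(n)}$ is only big and nef (it is pulled back along the Hilbert--Chow morphism), so the vanishing of higher cohomology should be obtained from Kawamata--Viehweg (using triviality of the canonical bundle) rather than Kodaira vanishing. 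With that adjustment, $\chi(S^{[n]},M_{(n)})=\binom{\chi(S,M)+n-1}{n}$ holds at infinitely many values of $q(M_{(n)})=(M,M)_S$, which is enough for the interpolation step, and the argument is complete.
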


The following important existence result for Lagrangian fibrations is known to hold for irreducible symplectic
varieties of K3$^{[n]}$-type:
\begin{theorem}[{\cite[Corollary 1.1]{Matsushita13}}]\label{thm:Matsushita}
  Let $X$ be an irreducible symplectic variety of K3$^{[n]}$-type and $0\neq L \in \Pic(X)$ be a primitive nef line bundle with
  $q(L)=0$. Then $\dim h^0(X,L)=n+1$ and $|L|$ induces a Lagrangian fibration  $\phi_L\colon X \to \bP^n$. In
  particular $L$ is base point free.
\end{theorem}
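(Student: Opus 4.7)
The plan is to reduce, via a deformation of polarized K3$^{[n]}$-type pairs, to an explicit standard model in which the Lagrangian fibration is constructed by hand, and then to transfer the conclusion back to $(X,L)$.

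For the standard model, I would take a projective K3 surface $S_0$ admitting an elliptic fibration $\pi_0\colon S_0\to\bP^1$ with primitive fibre class $f\in H^2(S_0,\bZ)$. Setting $X_0\coloneqq \Hilb^n(S_0)$, the class $f$ corresponds under Proposition \ref{prop:standard-decomposition-K3n} to a line bundle $L_0\in \Pic(X_0)$, and the composition of the Hilbert--Chow morphism $X_0\to \Sym^n(S_0)$ with the $n$-th symmetric power of $\pi_0$ produces a surjective morphism $\phi_0\colon X_0\to \Sym^n(\bP^1)=\bP^n$ with $\phi_0^*\dO_{\bP^n}(1)=L_0$. The generic fibre of $\phi_0$ is a product of $n$ elliptic curves, hence of dimension $n$ and Lagrangian. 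A direct computation using flat base change gives $R\phi_{0,*}L_0=\dO_{\bP^n}(1)$, so $h^0(X_0,L_0)=n+1$, $h^i(X_0,L_0)=0$ for $i>0$, and $L_0$ is base point free, primitive, nef, with $q(L_0)=0$.

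For the deformation step, I would combine the surjectivity of the period map with Markman's description of the monodromy action on $\LambdaKEn$: primitive nef isotropic classes of a fixed divisibility form a single orbit under the orientation-preserving monodromy group. Via the moduli space of pairs, this yields a smooth family $\cX\to T$ of K3$^{[n]}$-type varieties over a connected base $T$, together with a relatively nef line bundle $\cL$ whose fibrewise restriction is primitive of square zero, containing both $(X,L)$ and $(X_0,L_0)$. By Proposition \ref{prop:EGL}, $\chi(\cX_t,\cL_t)=n+1$ is constant. Upper semicontinuity on the model fibre combined with the vanishing of $h^i(X_0,L_0)$ for $i>0$ forces $h^i(\cX_t,\cL_t)=0$ for $i>0$ on a Zariski-open subset of $T$ and hence $h^0(\cX_t,\cL_t)=n+1$ there. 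To propagate the conclusion to every point of $T$, and in particular to $(X,L)$, I would invoke Matsushita's structural result that the property \textit{``$|\cL_t|$ defines a Lagrangian fibration onto $\bP^n$''} is stable under small deformations which preserve the class $\cL_t$ of type $(1,1)$, nef, and square-zero. Together with the explicit existence on $(X_0,L_0)$, this yields the Lagrangian fibration $\phi_L\colon X\to \bP^n$ and base point freeness of $L$.

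The main obstacle is precisely this last deformation-invariance step. Since $L$ is nef but not big, Kawamata--Viehweg-type vanishing is unavailable, and so the cohomological calculation alone cannot imply base point freeness. One genuinely needs the deformation theory of Lagrangian fibrations to promote the existence of sections into the existence of the fibration $\phi_L$ on every member of the family; this is the technical heart of the argument and is what sets the K3$^{[n]}$-type case apart from more elementary vanishing-based arguments.
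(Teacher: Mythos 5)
The paper itself does not prove this statement at all---it is quoted verbatim from Matsushita \cite[Corollary 1.1]{Matsushita13}---so the only meaningful comparison is with the argument in the literature, which does follow your outline: reduce by a monodromy/deformation argument to an explicit model carrying a Lagrangian fibration. As a self-contained proof, however, your proposal has two genuine gaps. First, the step you yourself flag as the technical heart, the deformation-invariance of ``$|L_t|$ induces a Lagrangian fibration'' along families in which the class stays of type $(1,1)$, nef and isotropic, is precisely the main theorem of Matsushita's paper; the statement you are asked to prove is his corollary of that theorem. Invoking it as a black box makes the argument circular in this context: nothing in your sketch supplies the openness/closedness of the locus where $\cL_t$ is semiample, and, as you note, no vanishing theorem is available because $L$ is nef but not big, so the Euler-characteristic and semicontinuity computations by themselves produce sections but not base point freeness.

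Second, the reduction itself is incomplete for general $n$. The divisibility $\div(L)$ is a monodromy and deformation invariant, and your single standard model $(\Hilb^n(S_0),L_0)$ only realizes primitive isotropic classes of divisibility one. For $n=2$ one checks that every primitive isotropic class automatically has divisibility one (if $\alpha=a\lambda+b\delta$ with $\gcd(a,b)=1$ were isotropic with $a$ even and $b$ odd, then $a^2q(\lambda)\equiv 0$ while $2b^2\equiv 2 \pmod 8$), so the use made of the theorem in this paper is unaffected; but for larger $n$ primitive isotropic classes of divisibility $t>1$ exist (e.g.\ $n=5$, $\alpha=2\lambda+\delta$ with $q(\lambda)=2$, $\div(\alpha)=2$), and these cannot be joined to $(X_0,L_0)$ by any family carrying a flat $(1,1)$ class. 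Even granting Markman's orbit classification, you need additional standard models for those orbits---the Beauville--Mukai systems on moduli spaces of torsion sheaves on K3 surfaces---which is exactly how the cited proof proceeds. A smaller inaccuracy: $R\phi_{0*}L_0\neq \dO_{\bP^n}(1)$; rather $R^i\phi_{0*}\dO_{X_0}\iso \Omega^i_{\bP^n}$, hence $R^i\phi_{0*}L_0\iso \Omega^i_{\bP^n}(1)$, and the vanishing $h^i(X_0,L_0)=0$ for $i>0$ follows from Bott vanishing on $\bP^n$, not from vanishing of higher direct images; the conclusion $h^0(X_0,L_0)=n+1$ does stand.
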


\subsection{Geometry of \texorpdfstring{Hilb$^2$(K3)}{Hilb2(K3)}} \label{ssec:geomofHilb2}

The construction of the Hilbert scheme is particularly easy to describe in the special case of Hilb$^2(S)$:
For a K3 surface $S$ consider the blowup $\SShat$ of $\SxS$ along the diagonal $\Delta_S$. 
The action $\tau$ on $\SxS$, which interchanges the two factors, induces an action $\widehat{\tau}$ on
the blow-up. The Hilbert scheme $\Hilb^2(S)$ is exactly the quotient of $\SShat$ by the action of $\widehat{\tau}$.
It thus comes with natural maps:
\begin{equation*}
  \begin{tikzcd}
    \SxS   &  \SShat \ar{l}[swap]{\zeta_S}\ar{r}{\eps_S}& \Hilb^2(S).
  \end{tikzcd}
\end{equation*}
In this case the embedding $H^2(S,\bZ)\to H^2(\Hilb^2(S),\bZ)$ from Proposition \ref{prop:standard-decomposition-K3n} is given via 
$\alpha_S \mapsto {\eps_S}_*\zeta_S^*\pr_1^*(\alpha_S)\eqqcolon \alpha$.
In particular 
\begin{equation}
\eps_S^*(\alpha)= \zeta_S^*(pr_1^*(\alpha_S) + \pr_2^*(\alpha_S)).\label{eq:epsalpha}
\end{equation}

Usually one defines
$\alpha$ via this property (i.e.~one defines $\alpha$ as the element to which the
$\widehat{\tau}$-invariant element $\zeta_S^*(pr_1^*(\alpha_S) + \pr_2^*(\alpha_S))$ descends in the
quotient $\Hilb^2(S)$). 

If we denote the exceptional divisor of the blow-up $\SShat$ by $E_S$,  one can check that
\begin{equation}
\eps_S^*(\delta)= E_S.\label{eq:epsdelta}
\end{equation}

\begin{lemma}\label{lem:assLBisnef}
  Let $S$ be a K3 surface and $H_S\in \Pic(S)$ a nef line bundle. 
  Then the associated line bundle $H$ is also nef.
\end{lemma}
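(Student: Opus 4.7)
The plan is to reduce the nefness of $H$ on $\Hilb^2(S)$ to the transparent nefness of $\pr_1^*H_S+\pr_2^*H_S$ on $\SxS$, using the diagram
\[
  \SxS \xleftarrow{\zeta_S} \SShat \xrightarrow{\eps_S} \Hilb^2(S)
\]
recalled in Subsection \ref{ssec:geomofHilb2}.

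The central reduction uses the following standard principle: for a finite surjective morphism $f\colon Y\to X$ of projective varieties and a line bundle $L\in \Pic(X)$, $L$ is nef on $X$ if and only if $f^*L$ is nef on $Y$. The non-trivial direction follows from the projection formula: for every irreducible curve $C\subseteq X$ one can choose an irreducible curve $\tilde C\subseteq Y$ mapping surjectively onto it, and then $(f^*L).\tilde C=(\deg f|_{\tilde C})\cdot L.C$, so $L.C\geq 0$ once $f^*L$ is nef. I would apply this to the finite surjective quotient map $\eps_S\colon \SShat \to \Hilb^2(S)$, reducing the claim to showing that $\eps_S^*H$ is nef on $\SShat$.

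By formula \eqref{eq:epsalpha} one has $\eps_S^*H=\zeta_S^*(\pr_1^*H_S+\pr_2^*H_S)$. Both $\pr_1^*H_S$ and $\pr_2^*H_S$ are pullbacks of the nef line bundle $H_S$ along (surjective) morphisms, hence nef on $\SxS$; their sum is therefore nef. Pulling back along the blow-up $\zeta_S\colon \SShat\to \SxS$ again preserves nefness, so $\eps_S^*H$ is nef, and by the reduction above $H$ is nef on $\Hilb^2(S)$.

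There is essentially no main obstacle here: the argument is a purely formal chase through the diagram. The one point to invoke explicitly is the ``nef descends under finite surjective morphisms'' principle; everything else reduces to the elementary facts that pullbacks and sums of nef line bundles are nef.
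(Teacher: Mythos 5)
Your argument is correct. It is essentially the same reduction as the paper's, just run in the opposite direction: the paper takes an arbitrary curve $C\subseteq \Hilb^2(S)$ and computes $\deg(C.H)=\deg({\pr_1}_*{\zeta_S}_*\eps_S^*C.\,H_S)\geq 0$ directly from the definition $H={\eps_S}_*\zeta_S^*\pr_1^*(H_S)$ via the projection formula, so nothing beyond that one-line computation is needed. You instead move the line bundle rather than the curve, using \eqref{eq:epsalpha} to see that $\eps_S^*H=\zeta_S^*(\pr_1^*H_S+\pr_2^*H_S)$ is nef and then invoking descent of nefness along the finite surjective quotient $\eps_S$ (which itself is proved by lifting curves and the same projection formula). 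Both proofs are sound and rest on the same diagram; yours is marginally longer because it routes through the auxiliary descent principle, while the paper's is marginally slicker but uses the pushforward description of $H$ rather than the pullback formula.
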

\begin{proof}
  Let $C\subseteq \Hilb^2(S)$ be an arbitrary curve. Then
  \begin{align*}
    \deg(C.H)=\deg(C.{\eps_S}_* \zeta_S^* \pr_1^*(H_S))
    &= \deg({\pr_1}_* {\zeta_S}_*\eps_S^*C. H_S) \geq 0
  \end{align*}
shows that $H$ is nef.
\end{proof}

\begin{lemma}\label{lem:assLBisbpf}
  Consider a K3 surface $S$, a line bundle $H_S\in \Pic(S)$ and the associated line
  bundle $H \in \Pic( \Hilb^2(S))$. If $H_S$ is base point free then $H$ is also base point free.
\end{lemma}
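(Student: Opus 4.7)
My plan is to lift the problem to the natural double cover $\eps_S\colon \SShat\to\Hilb^2(S)$ and exploit the fact that sections of $H$ correspond bijectively to $\hat{\tau}$-invariant sections of $\eps_S^*H = \zeta_S^*(\pr_1^*H_S\otimes \pr_2^*H_S)$ (cf.~\eqref{eq:epsalpha}). Since $\zeta_S$ is a blow-up between smooth varieties, pullback further identifies these with $\tau$-invariant sections of $\pr_1^*H_S\otimes\pr_2^*H_S$ on $\SxS$. The key observation is that for any $s\in H^0(S,H_S)$, the section $\pr_1^*s\otimes\pr_2^*s$ is tautologically $\tau$-invariant, and hence descends to a global section $\sigma\in H^0(\Hilb^2(S),H)$. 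It will therefore suffice to produce, for every $\xi\in\Hilb^2(S)$, some $s\in H^0(S,H_S)$ whose associated $\sigma$ is non-vanishing at $\xi$.

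I would then distinguish two cases according to the length-two subscheme $Z\subseteq S$ associated to $\xi$. If $Z=\{p,q\}$ is reduced with $p\neq q$, then $\eps_S^{-1}(\xi)$ consists of the two points of $\SShat$ mapping via $\zeta_S$ to $(p,q)$ and $(q,p)$; since $H_S$ is base point free, the hyperplanes $V_p,V_q\subseteq H^0(S,H_S)$ of sections vanishing at $p$ respectively $q$ are proper, so over $\bC$ there exists $s\in H^0(S,H_S)\smallsetminus(V_p\cup V_q)$, and $\pr_1^*s\otimes\pr_2^*s$ is then non-zero at both $(p,q)$ and $(q,p)$. If instead $Z$ is a fat point supported at some $p\in S$, then $\eps_S^{-1}(\xi)$ consists of a single point on the exceptional divisor $E_S$ lying over the diagonal point $(p,p)$, and any $s\in H^0(S,H_S)$ with $s(p)\neq 0$ (which exists since $H_S$ is base point free) yields a $\sigma$ that is non-zero at $\xi$. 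In both cases $\xi$ avoids the base locus of $H$.

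The only real subtlety I anticipate is justifying that $\hat{\tau}$-invariant sections genuinely descend to global sections of $H$, rather than merely to sections away from the fixed locus of $\hat{\tau}$. This amounts to checking that the $\hat{\tau}$-linearization of $\eps_S^*H$ acts trivially on the fibres over $E_S$, which in turn follows from the observation that the swap involution on $(\pr_1^*H_S\otimes\pr_2^*H_S)|_{\Delta_S}$ is the canonical identification of $H_S|_p\otimes H_S|_p$ with itself. Once this is in place, the rest of the argument is essentially bookkeeping.
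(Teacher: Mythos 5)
Your argument is correct and is essentially the paper's own proof: both descend the $\widehat{\tau}$-invariant section $\zeta_S^*(\pr_1^*s\otimes\pr_2^*s)$ to a section of $H$ and choose $s\in H^0(S,H_S)$ non-vanishing at both points of the support of the given length-two subscheme. Your case distinction (reduced versus non-reduced) and the remarks on the hyperplane-avoidance and on the linearization over the exceptional divisor only spell out details the paper treats uniformly or implicitly.
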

\begin{proof}
  Start with an arbitrary point $z\in \Hilb^2(S)$. The support of the subscheme $z\subseteq S$ consists of two points
  $x,y \in S$ (set $x=y=\Supp(z)$ in the case where $z$ is non-reduced). Since $H_S$ is base point free, there
  exists a section 
  $s\in H^0(S, H_S)$ with $s(x)\neq 0$ and $s(y)\neq 0$. 
  Consider the section 
  \begin{equation*}
    \zeta^*(pr_1^*(s) \otimes \pr_2^*(s)) \in H^0\Big(\SShat, \zeta_S^*\big(pr_1^*(H_S) + \pr_2^*(H_S)\big)\Big),
  \end{equation*}
which is $\widehat{\tau}$-invariant. It therefore descends to a section $s^{[2]} \in H^0(\Hilb^2(S),H)$,
which satisfies $\eps_S^*(s^{[2]}) = \zeta^*(pr_1^*(s) \otimes \pr_2^*(s))$.
Then $s^{[2]}$ 
takes the  value $s(x)\cdot s(y)\neq 0$ on $z$ and therefore $z$ is not a base point of $H$, which proves the lemma.
\end{proof}

\subsection{Moduli of polarized irreducible symplectic varieties of \texorpdfstring{K3$^{[2]}$-type}{K3n-type}} \label{ssec:moduli-Apostolov}

In this article, we study the base points of certain line bundles on special K3$^{[2]}$-type
varieties.
The following well-known proposition will be crucial for deducing statements for generic elements in the moduli
space of polarized irreducible symplectic varieties of K3$^{[2]}$-type from this statement.
\begin{proposition}[{Corollary to \cite[Proposition 3.2]{Apostolov14}}]\label{prop:candeform}
   Let $X$ and $\Xtild$ be (four-dimensional) irreducible symplectic varieties of K$3^{[2]}$-type.
   Fix primitive classes $D \in \Pic(X)$ and $\Dtild \in\Pic(\Xtild)$. Suppose that
   \begin{compactenum}
      \item $q(D)=q(\Dtild)> 0$, 
      \item $\div(D)=\div(\Dtild)$, and that 
      \item  $D\in \cC_X$ and $\Dtild\in \cC_X$ lie in the positive cones.
   \end{compactenum}
   Then the pairs $(X,D)$ and $(\Xtild', \Dtild)$ are deformation equivalent as pairs (i.e.\,there exists a
   family $\cX'\to T'$ of irreducible symplectic varieties over a possibly singular connected base $T'$,
   with $\cD\in \Pic(\cX')$,
   such that the above pairs are isomorphic to special fibres of
   this family).
\end{proposition}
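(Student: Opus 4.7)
The plan is to reduce the claim to a lattice-theoretic orbit statement for the monodromy of $\LambdaKEtwo$ (which is the content of Apostolov's Proposition 3.2) and then invoke connectedness of the marked moduli space of K3$^{[2]}$-type varieties.

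First I would pick markings $\phi\colon H^2(X,\bZ)\xrightarrow{\sim}\LambdaKEtwo$ and $\tilde{\phi}\colon H^2(\Xtild,\bZ)\xrightarrow{\sim}\LambdaKEtwo$, and set $\ell\coloneqq \phi(D)$, $\tilde{\ell}\coloneqq \tilde{\phi}(\Dtild)$. By hypothesis these are primitive vectors in $\LambdaKEtwo$ with $q(\ell)=q(\tilde{\ell})>0$ and $\div(\ell)=\div(\tilde{\ell})$. Now the key input, which is precisely Apostolov's \cite[Proposition 3.2]{Apostolov14}, says that the monodromy group $\Monbir\subseteq \rO(\LambdaKEtwo)$ of K3$^{[2]}$-type varieties acts transitively on primitive vectors of prescribed square and divisibility. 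Hence there exists $g\in\Monbir$ with $g(\ell)=\tilde{\ell}$. Replacing $\phi$ by $g\circ\phi$ yields a new marking (still induced by a parallel transport operator, since $g$ is monodromy), so without loss of generality $\phi(D)=\tilde{\phi}(\Dtild)=\ell$.

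Next I would pass to the marked moduli space $\M$ of K3$^{[2]}$-type varieties and the period map $\cP\colon \M\to \Q\subseteq \bP(\LambdaKEtwo\otimes\bC)$. Fix a connected component $\M^0$ of $\M$ that contains $(X,\phi)$ (choosing the one compatible with the chosen orientation of the positive cone). The Global Torelli theorem together with Huybrechts' surjectivity of the period map and the Verbitsky--Markman result on connected components shows that $\M^0$ is a non-empty Hausdorff space whose fibres over $\Q$ consist of birational irreducible symplectic varieties; moreover the sign ambiguity is fixed by requiring that the chosen marking sends $D$ to a class in a prescribed component of $\{\alpha \mid q(\alpha)>0\}\subseteq \LambdaKEtwo\otimes\bR$. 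This is exactly where the hypothesis $D,\Dtild\in\cC_X,\cC_{\Xtild}$ is used: after possibly applying $-\id$ (which is a monodromy operator), both marked pairs $(X,\phi)$ and $(\Xtild,\tilde{\phi})$ lie in the same connected component $\M^0$.

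Finally, since $\M^0$ is connected, there is a path $\gamma$ in $\M^0$ joining $(X,\phi)$ and $(\Xtild,\tilde{\phi})$; covering $\gamma$ by charts coming from local Kuranishi families and concatenating produces a (possibly reducible) family $\cX'\to T'$ over a connected base, carrying a flat section $\cD\in \Pic(\cX')$ corresponding under the global marking to $\ell$, and having $(X,D)$ and $(\Xtild,\Dtild)$ as special fibres. This is the required deformation of pairs. I expect the main obstacle to be the orbit statement invoked from Apostolov's paper — once that is in place, the remainder is a standard Torelli-plus-connectedness argument; the only subtlety worth being careful about is the positive-cone orientation, which is exactly what the third hypothesis is designed to handle.
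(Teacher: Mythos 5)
The decisive step of your argument fails as stated. In your final paragraph you take a path $\gamma$ in a connected component of the full marked moduli space and claim that concatenating Kuranishi charts along it yields a family $\cX'\to T'$ carrying a line bundle $\cD\in\Pic(\cX')$ whose class is the fixed vector $\ell$. But if $\cD$ is a line bundle on the total space, its restriction to \emph{every} fibre is a line bundle, so $\ell$ must remain of Hodge type $(1,1)$ along the whole path. A path in the full marked moduli space does nothing to guarantee this: the periods move freely in the period domain, and the locus where $\ell$ stays algebraic is the preimage of the hyperplane section $\rQ_{\ell^\perp}$. So you must produce a path inside that preimage (in fact inside the part where $\ell$ lies in the positive cone of each fibre), and proving that this locus is connected is exactly the nontrivial content behind the connectedness of the polarized moduli space: $\rQ_{\ell^\perp}$ itself has two connected components (the sublattice $\ell^\perp$ has signature $(2,b_2-3)$), the marked moduli space is non-Hausdorff, and one has to argue via monodromy and the chamber structure that the relevant points can be joined. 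In other words, your sketch re-derives the cited result but omits its key step, so the transitivity of $\Monbir$ on primitive vectors of given square and divisibility does not by itself close the argument. A smaller but genuine error: $-\id$ is \emph{not} a monodromy operator (it reverses the orientation of the positive cone), and applying it would in any case replace $\ell$ by $-\ell$, so your fix of the sign/orientation issue does not work as written.

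For comparison, the paper takes a route that avoids these issues by quoting Apostolov's result in the form ``the polarized moduli space $\cM_{d,m}$ (fixed $q$ and $\div$) is connected'' rather than as a bare orbit statement: starting from $(X,D)$ one deforms within the locus where $D$ stays algebraic to a fibre with $\rho(\cX_t)=1$; the projectivity criterion then shows $\cX_t$ is projective and $\cD_t$ is ample (this is precisely where the hypothesis $D\in\cC_X$ enters — it rules out $-\cD_t$ being the ample class), and likewise for $(\Xtild,\Dtild)$. Both pairs are thus deformation equivalent to points of $\cM_{d,m}$, and connectedness of $\cM_{d,m}$ finishes the proof. If you want to keep your Torelli-theoretic route, you must either quote the connectedness of the polarized (or $\ell$-marked) moduli space directly — at which point the monodromy-transitivity input is subsumed — or carry out the orbit-to-connectedness argument (choice of component of $\rQ_{\ell^\perp}$, Kähler/positive-cone chambers, non-Hausdorffness) in detail.
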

\begin{proof}
  Starting from the pair $(X,D)$, it is possible to find a family $\cX\to T$ with $\cX_0\iso X$,
  which satisfies that there exists $\cD\in \Pic(\cX)$ such that $\cD|_{\cX_0}=D$, and such that
  $\rho(\cX_t)=1$ for general elements $t\in T$.
  By the projectivity criterion (\cite[Theorem 3.11]{Huybrechts:HK:basic-results} and \cite[Theorem 2]{Huybrechts:HK:basic-results:erratum}) $\cX_t$ is projective and one can deduce
  that $\cD_t$   is ample.

  In the same way deform the pair $(\Xtild, \Dtild)$ to a pair $(\widetilde{\cX}_t,\widetilde{\cD}_t)$ with
  $\widetilde{\cD}_t$ ample.

  Apply Apostolov's result \cite[Proposition 3.2 + Remark 3.3.(1)]{Apostolov14} that the moduli space of polarized irreducible symplectic varieties of
  K3$^{[2]}$-type  (with fixed
  $q$ and $\div$ for the polarization) is connected. This shows that
  the pairs
  $(\cX_t,\cD_t)$ and $(\widetilde{\cX}_t,\widetilde{\cD}_t)$ are deformation equivalent, and concludes the proof.
\end{proof}

\begin{corollary}
  The moduli space $\cM_{d,m}$ which parametrizes pairs $(X,A)$, where $X$ is an irreducible symplectic
  variety of K3$^{[2]}$-type, and $A\in \Pic(X)$ is primitive and ample with $q(A)=2d$ and $\div(A)=m$,
  consists of a unique irreducible component.
\end{corollary}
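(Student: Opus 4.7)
My plan is to derive the statement from Proposition \ref{prop:candeform} (which ultimately rests on Apostolov's connectedness theorem) together with smoothness of the moduli space $\cM_{d,m}$. Concretely, I would show that $\cM_{d,m}$ is both connected and smooth; for a smooth scheme connected components coincide with irreducible components, so this yields uniqueness of the irreducible component.

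For connectedness I would pick any two points $(X,A), (\Xtild, \Atild) \in \cM_{d,m}$ and verify the hypotheses of Proposition \ref{prop:candeform}. Ampleness of $A$ and $\Atild$ automatically yields $q(A)=q(\Atild)=2d>0$ and places both classes in their respective positive cones, while the equality $\div(A)=\div(\Atild)=m$ is part of the data defining $\cM_{d,m}$. Proposition \ref{prop:candeform} then gives deformation equivalence of the two pairs as pairs of polarized K3$^{[2]}$-type varieties. Since Apostolov's underlying theorem is formulated directly for the polarized moduli space with fixed $q$ and $\div$, this means precisely that $(X,A)$ and $(\Xtild,\Atild)$ lie in the same connected component of $\cM_{d,m}$.

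For smoothness I would invoke the standard fact that moduli spaces of polarized irreducible symplectic varieties are smooth: deformations of such varieties are unobstructed by Bogomolov--Tian--Todorov, and the condition that a fixed second-cohomology class remain of Hodge type $(1,1)$ cuts out a smooth divisor in the local deformation space, via local Torelli (which identifies the deformation space with an open subset of the period domain). Combining the two inputs gives the corollary. I do not anticipate a genuine obstacle here: the real content has already been packaged into Proposition \ref{prop:candeform}, so the corollary is essentially a reformulation of it, with smoothness needed only to upgrade ``connected'' to ``irreducible''.
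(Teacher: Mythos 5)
Your argument is correct and follows essentially the same route as the paper: connectedness comes from Apostolov's theorem (which is also the content behind Proposition \ref{prop:candeform}), and smoothness of $\cM_{d,m}$ comes from unobstructedness plus local Torelli — the paper phrases this as identifying a local chart with an open subset of the smooth quadric $\rQ_{h^\perp}$ via the period map, which is the same mechanism as your ``the $(1,1)$-condition cuts out a smooth hypersurface in the deformation space''. The final step, that a connected smooth space has a unique irreducible component, is exactly how the paper concludes as well.
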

\begin{proof}
  Apostolov \cite[Proposition 3.2 + Remark 3.3.(1)]{Apostolov14} shows  that this moduli space has only one
  connected component.
  To see that this consists of a single irreducible componen, we use the period map for marked irreducible
  symplectic varieties $\dP \colon \Def(X) \to Q_\Lambda$ as introduced in \cite[Section
    25.2]{Gross-Huybrechts-Joyce} (in the following, we use the notation introduced there).
  For a given point in $\cM_{d,m}$, one can find a small simply connected open neighbourhood $U$ (in the classical
  topology), 
  such that the universal family can be equipped with a marking.
  Furthermore - after potentially shrinking $U$ -  the period map identifies $U$ with
  an open subset of $\rQ_{h^\perp}\subseteq \rQ_{\Lambda}$, where $h\in \Lambda$ corresponds to the class
  of the polarization, and $h^\perp$ is its orthogonal complement in $\Lambda$ (this follows from
  \cite[Th\'eor\`eme 5]{Beauville1983}).
  Since $\rQ_{h^\perp}$ is a smooth quadric in $\bP(h^\perp\otimes \bC)$, this shows that $\cM_{d,m}$ is
  smooth at every point. In particular,  it can only have one irreducible component (since it is connected).
\end{proof}

\section{Base point freeness via the birational model} 
\label{sec:BPinexample}
In this section we study  $X\coloneqq \Hilb^2(S)$ for a K3 surface $S$ with $\Pic(S)\iso \bZ\cdot H_S$ for an ample line bundle $H_S$ with
$(H_S)^2=2$.
 The goal of this section is to prove base point freeness for almost all nef line bundles on $X$
 and its birational models.
This will be a crucial ingredient for the proof of generic base point freeness for K3$^{[2]}$-type (in
particular for the partial result in
Section \ref{sec:genbpf4}). 

While we will focus on the specific geometry of this particular example in Section \ref{sec:2dimBL}, the
techniques in this section are mostly of a very general and combinatorial nature. We consciously keep the
notation more general than necessary for our example, since in this way it can be adapted more easily to
other settings.

Start with the following observation:

\begin{lemma}\label{lem:Hbpfinexample}
  Let $S$ be an arbitrary K3 surface with $\Pic(S)\iso \bZ\cdot H_S$ for an ample line bundle
  $H_S$ (for now, $(H_S)^2>0$ can be arbitrary). 
  Then the associated line bundle $H\in \Pic(\Hilb^2(S))$  is  base point free. Furthermore, $H$ is big and nef.
\end{lemma}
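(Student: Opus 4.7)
The plan is to verify the three assertions (base point freeness, nefness, bigness) in turn, using the results about associated line bundles from Subsection \ref{ssec:geomofHilb2} together with the elementary behaviour of the Beauville--Bogomolov--Fujiki form under the embedding $H^2(S,\bZ)\hookrightarrow H^2(\Hilb^2(S),\bZ)$ of Proposition \ref{prop:standard-decomposition-K3n}.

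First I would handle base point freeness. Since $\Pic(S)\iso \bZ\cdot H_S$ with $H_S$ ample, Corollary \ref{cor:bpfifno0-class} implies that $H_S$ is base point free on $S$. Applying Lemma \ref{lem:assLBisbpf} to the associated line bundle $H\in \Pic(\Hilb^2(S))$ then gives that $H$ is base point free. In particular $H$ is nef; alternatively, $H_S$ ample implies $H_S$ nef, so Lemma \ref{lem:assLBisnef} already yields nefness of $H$.

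Next I would show bigness. By Proposition \ref{prop:standard-decomposition-K3n}\,\ref{it:H2K3n}, the embedding $H^2(S,\bZ)\hookrightarrow H^2(\Hilb^2(S),\bZ)$ is an isometry onto a direct summand orthogonal to $\bZ\cdot\delta$, so $q(H)=(H_S)^2>0$. Combining nefness with $q(H)>0$ and the Fujiki relation $H^4=c\cdot q(H)^2$ for some $c>0$ (equivalently, using the leading term of the Riemann--Roch polynomial in Proposition \ref{prop:EGL}), one obtains $H^4>0$, so $H$ is big and nef.

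There is no real obstacle here; the statement is essentially a direct combination of Corollary \ref{cor:bpfifno0-class} with Lemmas \ref{lem:assLBisnef} and \ref{lem:assLBisbpf}, together with the compatibility of $q$ with the natural embedding $H^2(S,\bZ)\hookrightarrow H^2(\Hilb^2(S),\bZ)$. The only mildly non-trivial point is deducing bigness from nefness plus $q(H)>0$, which is a standard consequence of the Fujiki relation on K3$^{[n]}$-type varieties.
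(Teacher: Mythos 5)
Your proposal is correct and follows essentially the same route as the paper: base point freeness via Corollary \ref{cor:bpfifno0-class} combined with Lemma \ref{lem:assLBisbpf}, nefness via Lemma \ref{lem:assLBisnef}, and bigness from $q(H)=(H_S)^2>0$ together with the Fujiki relation. No gaps.
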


\begin{proof}
  Under the given condition the line bundle
  $H_S$ is base point free by Corollary \ref{cor:bpfifno0-class}.
  Then the associated line bundle
  $H$ is also base point free by Lemma \ref{lem:assLBisbpf}.

   Lemma \ref{lem:assLBisnef} shows that $H$ is nef. Then use the Fujiki relation (\cite[Proposition
     23.14]{Gross-Huybrechts-Joyce}) and 
  $q(H)=(H_S)^2> 0$ to see that 
  $\int_{\Hilb^2(S)}H^{2n}= c^{-1}\cdot q(H)^n>0$, 
  which implies that $H$ is big and nef.
 \end{proof}

From now on, we will consider the case, where $(H_S)^2=2$.
\begin{lemma} \label{lem:cones}
  Let $S$ be a K3 surface with $\Pic(S)\iso \bZ\cdot H_S$ for an ample line bundle $H_S$ with
$(H_S)^2=2$. Consider the irreducible symplectic variety $X\coloneqq \Hilb^2(S)$ with the usual
decomposition $\Pic(X)\iso \bZ\cdot H \oplus \bZ \cdot \delta$, where $H$ is the line bundle in $\Pic(X)$ associated
to $H_S$ (compare Proposition \ref{prop:standard-decomposition-K3n}). Then
\begin{enumerate}
\item $\overline{\cC_X}\cap \Pic(X)_\bR= \langle H+\delta, H-\delta\rangle$ \label{it:cones-a}
\item  $\birKbar_X \cap \Pic(X)_\bR= \langle H, H-\delta \rangle$, \label{it:cones-b}
\item $\Nef(X)= \langle H, 3 H - 2 \delta \rangle \subseteq \Pic(X)_\bR$, and \label{it:cones-c}
\item \label{it:cones-d} there is a unique other birational model $X'$ of $X$ which is an irreducible
  symplectic variety. This satisfies
  \begin{equation*}
\Nef(X')= \langle
  3H'-2\delta', H'-\delta' \rangle \subseteq \Pic(X')_\bR,
\end{equation*}
 where $H',\delta'\in \Pic(X')$ are the line bundles which correspond to $H$ and $\delta$ via
  the birational transform.
\end{enumerate}
\end{lemma}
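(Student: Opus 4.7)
The plan is to use the explicit form of the Beauville--Bogomolov--Fujiki pairing on the Picard lattice, namely
\begin{equation*}
q(aH + b\delta) = 2a^2 - 2b^2, \qquad (H,\delta)_q = 0,
\end{equation*}
together with the Bayer--Macr\`i wall-and-chamber description of movable and nef cones on Hilbert schemes of K3 surfaces. Part (a) is immediate: $q$ vanishes on the two lines $a = \pm b$, so $\{q>0\}$ in $\Pic(X)_\bR$ has two connected components, and the one containing the big and nef class $H$ from Lemma \ref{lem:Hbpfinexample} is precisely $\langle H+\delta,H-\delta\rangle$.

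For (b), I would invoke Markman's theorem (in the form provided by Bayer--Macr\`i for Hilbert schemes of K3 surfaces) that $\birKbar_X\cap \cC_X$ is the chamber of $\cC_X$ cut out by the hyperplanes $D^\perp$ where $D$ is a primitive class of a prime exceptional divisor. On a K3$^{[2]}$-type variety such $D$ satisfy $q(D)=-2$; the equation $a^2-b^2=-1$ in $\Pic(X)$ has the unique primitive solutions $\pm\delta$, with $\div(\delta)=2$ and corresponding effective prime exceptional divisor equal to the Hilbert--Chow divisor $2\delta$. Thus the only wall of $\cC_X$ is $\delta^\perp = \bR H$; since $H$ is big and nef (and lies on this wall), the ample side is the chamber $\{b\leq 0\}$, yielding $\birKbar_X\cap \Pic(X)_\bR = \langle H, H-\delta\rangle$.

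For (c) and (d), the flopping walls inside $\birKbar_X$ for a K3$^{[2]}$-type variety are the hyperplanes $v^\perp$ with $v\in \Pic(X)$ a primitive MBM class satisfying $q(v)=-10$ and $\div(v)=2$ (by the MBM classification due to Mongardi and Bayer--Macr\`i). In our lattice, $a^2-b^2=-5$ combined with $\gcd(a,2b)=2$ forces $v = \pm 2H\pm 3\delta$, with orthogonal walls $\bR(3H\mp 2\delta)$; only $\bR(3H-2\delta)$ lies in $\birKbar_X = \langle H,H-\delta\rangle$. Hence this is the unique flopping wall inside $\birKbar_X$, splitting it into the chamber $\langle H, 3H-2\delta\rangle$ (which contains the nef class $H$ and is therefore $\Nef(X)$) and the chamber $\langle 3H-2\delta, H-\delta\rangle$, which is then $f^*\Nef(X')$ for the unique second irreducible symplectic birational model $X'$, produced by the Mukai flop along the $\bP^2$'s contracted by the corresponding extremal ray (Theorem \ref{thm:Wierzba}). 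Pulling the cone back to $\Pic(X')$ via the identification $H\leftrightarrow H'$, $\delta\leftrightarrow\delta'$ of Lemma \ref{lem:biratisoonH2} gives $\Nef(X') = \langle 3H'-2\delta', H'-\delta'\rangle$.

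The main obstacle is applying the MBM-class classification carefully: one needs to be confident that in K3$^{[2]}$-type the only numerical types of wall classes are $q=-2$ (walls of the movable cone) and $(q,\div)=(-10,2)$ (flopping walls inside the movable cone), so that no unforeseen walls intervene. Granted this, the enumeration of primitive integer solutions in the rank two lattice $\bZ H\oplus \bZ \delta$ is immediate and pins down exactly the cones asserted.
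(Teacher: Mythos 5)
Your proof is correct and takes essentially the same route as the paper: it is precisely the alternative argument the paper sketches, deducing the chamber structure from the numerical classification of wall divisors on K3$^{[2]}$-type (the only walls meeting $\Pic(X)_\bR$ come from $\pm\delta$, with $q=-2$, and $\pm(2H\pm3\delta)$, with $q=-10$ and divisibility $2$, per Mongardi), whereas the paper's primary proof of (b) and (c) is a direct citation of Bayer--Macr\`i \cite{BayerMacri13} with (d) obtained by the same arguments. One minor point: in (b), the remark that $H$ is big and nef and lies on the wall $\delta^\perp$ cannot by itself select the correct side of that wall; what does determine it is the fact (which you do state) that the effective prime exceptional class is the Hilbert--Chow divisor $2\delta$, so $\birKbar_X$ must pair non-negatively with $\delta$, which forces $b\le 0$ and hence $\birKbar_X\cap\Pic(X)_\bR=\langle H,H-\delta\rangle$.
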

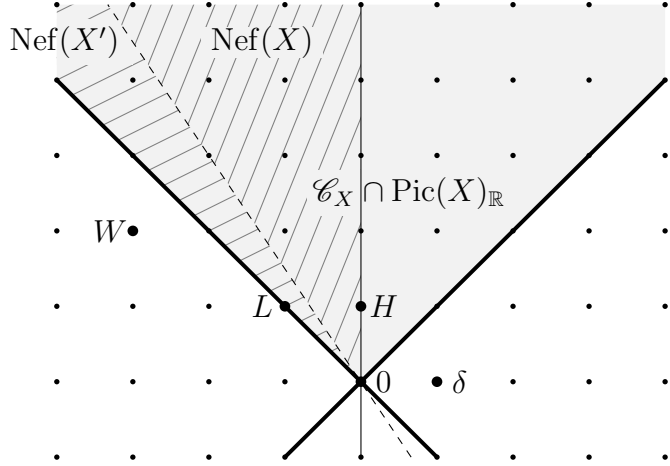
\begin{figure}[htbp] 
  \centering

\definecolor{light-gray}{gray}{0.95}
  \begin{tikzpicture}[scale=1]

       \path[fill=light-gray] (0,0) -- (-4,4) -- (-4,5) -- (4,5) -- (4,4) -- cycle;

       \begin{scope}
         \clip (0,0) -- (-3.33,5) -- (0,5) -- cycle;
      \foreach \x in {0,1,...,20} 
      {\draw [gray] (0-0.2*\x,0+0.2*\x) -- (2-0.2*\x,5+0.2*\x);}
       \path[fill=light-gray] (-2.1,4.3) rectangle (-0.5,4.7);
       \path[fill=light-gray] (-0.7,2.2) rectangle (0,2.8);
       \end{scope}

       \begin{scope}
         \clip (0,0) -- (-3.33,5) -- (-4,5) -- (-4,4) -- cycle;
      \foreach \x in {0,1,...,30} 
      {\draw [gray] (0-0.2*\x,0+0.2*\x) -- (4-0.2*\x,2+0.2*\x);}
       \path[fill=light-gray] (-4,4.2) rectangle (-3,4.8);
       \end{scope}

     \foreach \x in {-4,-3,...,4} 
       \foreach \y in {-1,...,5}{
         \fill(\x,\y) circle (1pt);}
       \fill(0,0) circle (2 pt);
       \node (o) at (0.3,0) {$0$};
       \fill(1,0) circle (2 pt);
       \node (d) at (1.3,0) {$\delta$};
       \fill(0,1) circle (2 pt);
       \node (h) at (0.3,1) {$H$};
       \fill(-1,1) circle (2 pt);
       \node (l) at (-1.3,1) {$L$};
       \fill(-3,2) circle (2 pt);
       \node (w) at (-3.3,2) {$W$};

\draw (0,-1) -- (0,5);
\draw [line width=1.5pt] (-1,-1) -- (4,4);
\draw [line width=1.5pt] (1,-1) -- (-4,4);

\draw [dashed] (0.66,-1) -- (-3.33,5);

       \node (Cx) at (0.63,2.5) {$\cC_X\cap \Pic(X)_\bR$};
       \node (nefx) at (-1.3,4.5) {$\Nef(X)$};
       \node (nefx') at (-3.9,4.5) {$\Nef(X')$};

  \end{tikzpicture}

  \caption{Depiction of cones in $\Pic(X)_\bR$}
  \label{fig:cones}
\end{figure}

\begin{proof}
  For \ref{it:cones-a}, it is sufficient to note that $H$ is nef (by Lemma \ref{lem:Hbpfinexample}), $q(H+\delta)=0$ and $q(H-\delta)=0$.

  Part \ref{it:cones-b} was already proved by Bayer and Macr\`i in \cite[Proposition
  13.1.(a)]{BayerMacri13}, and part 
  \ref{it:cones-c} in \cite[Lemma 13.3.(b)]{BayerMacri13}.
  Part \ref{it:cones-d} can be observed using the same arguments. 

For an alternative proof one can use Mongardi's description of the wall divisors for K3$^{[2]}$-type (see
\cite[Proposition 2.12]{Mongardi13}). This implies that the only wall divisors on $X$ are $\pm
\delta$ and $\pm (3H \pm 2\delta)$, from which one can deduce that the chamber decomposition is as claimed.
\end{proof}

For the rest of this section fix the following 
\begin{notation*}\label{notation}
  Let $X\coloneqq \Hilb^2(S)$ for a K3 surface $S$ with $\Pic(S)\iso \bZ\cdot
  H_S$ for an ample line bundle $H_S$ with $(H_S)^2=2$. 
  By $H\in \Pic(X)$ we refer to the line bundle associated to $H_S$.
  We consider the standard decomposition $\Pic(X)\iso \bZ\cdot H \oplus \bZ\cdot \delta$ with $q(\delta)=-2$.
  Define $L\coloneqq H-\delta$, and $W\coloneqq 2H-3\delta$.

  The unique other birational model of $X$ which is an irreducible symplectic variety is called $X'$. The line bundles $H', \delta', L'$, and $W'\in
  \Pic(X')$ are the ones corresponding to $H, \delta, L$, and $W$ via the birational transform.
\end{notation*}

\begin{remark}\label{rem:WisWall}
The line bundle $W$ is one of the wall divisors mentioned in the last proof. It has the key
property that its orthogonal complement in  $\Pic(X)_\bR$ is exactly the wall between 
$\Nef(X)$ and $\Nef(X')$, since $(3H-2\delta,W)_q
=0$.
\end{remark}

Note that the integral part $\birKbar_X\cap \Pic(X)$ is generated by $L$ and $H$. The line bundle $H\in \Pic(X)$ is base point free by
Lemma \ref{lem:Hbpfinexample}. Furthermore, observe that $q(L')=q(L)=0$. By Matsushita's result (see Theorem
\ref{thm:Matsushita}), this implies that 
$L'\in \Pic(X')$ is base point free.

By studying the link between $X$ and $X'$, we will be able to prove base point freeness for many 
line bundles 
in $\Nef(X)\cap\Pic(X)$ and $\Nef(X')\cap \Pic(X')$, using the base point freeness of $H$ and $L'$. 
Theorem \ref{thm:Wierzba} applies to $X$ and $X'$ as above and shows that they are connected by a
sequence of Mukai flops.
According to Theorem \ref{thm:Wierzba}, denote the components of the indeterminacy locus of
$f\colon X \dashrightarrow X'$ by $P_i\iso \bP^2$ with $i=1,\dots ,r$.

Since $f$ is induced by the Mukai flop in the union of the $P_i$ (compare Section \ref{ssec:Mukaiflops}), it fits into the following diagram:
\begin{equation*}
\begin{tikzcd}[row sep=small, column sep=tiny]
  && & &\bigsqcup_i E_i \ar[hook]{dd} \ar{dddllll}[swap]{\eta} \ar{dddrrrr}{\eta'} & & && \\
  &&&&&&&& \\
  && & & \Xh \ar{dl}[swap]{\phi} \ar{dr}{\phi'} & & && \\
 P = \bigsqcup_i P_i \ar[hook]{rrr}[swap]{} && &X \ar[dashed]{rr}{f} & 
    &X'\ar[hookleftarrow]{rrr}[swap]{} & &&\bigsqcup_i P'_i =P'\, ,
\end{tikzcd}
\end{equation*}

where $\Xhat$ is the blow-up of $X$ in the union of the $P_i$ and the $E_i$ are the corresponding
exceptional divisors. Note that by the construction of a Mukai flop, $\Xhat$ is at the same time the
blow-up of $X'$ in the union of $P_i'\iso \bP^2\dual$, 
where $P'\coloneqq \sqcup_i P_i'$ is the indeterminacy locus of $f^{-1}$, and again the $E_i$ are the
corresponding exceptional divisors.
With this notation $E_i$ is isomorphic to the coincidence variety $E_i\iso Z\coloneqq \{(p,p')\mid p\in
p'\} \subseteq P_i\times P_i' \iso \bP^2 \times \bP^2\dual$. 
We use the following standard notation $\dO(a,b)\coloneqq \dO(a)\boxtimes \dO(b)\coloneqq
\pr_1^*\dO(a)\otimes \pr_2^*\dO(b)\in \Pic(\bP^2 \times \bP^2\dual)$ for $a,b \in \bZ$.

In the following we prove some combinatorial properties for pullbacks of line bundle on $X$ or $X'$ to $\Xh$.

\begin{lemma}\label{lem:degphiA}
   With the notation fixed on page \pageref{notation}, there exist constants $m_i \in \bQ_+$ for $i=1,\dots, r$ such that
  for all $i$ and for an arbitrary line bundle $A\in \Pic(X)$ 
     \begin{equation*}
       \Pic(E_i)\to \Pic(Z), \quad \phi^*(A)|_{E_i} \mapsto \pr_1^*\dO( m_i \cdot (A,W)_q).
     \end{equation*}
These $m_i$ do not depend on $A$.
For each $i$ the constant $m_i$ is bounded by $m_i\geq \frac{1}{2}$.
\end{lemma}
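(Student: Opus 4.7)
The plan is to exploit the factorization of $\phi|_{E_i}$ through the contraction onto $P_i$. Under the isomorphism $E_i \iso Z$ of \eqref{eq:EisoZ}, the projection $\eta$ corresponds to $\pr_1$, and $\phi|_{E_i}$ is the composition $E_i \xrightarrow{\pr_1} P_i \hookrightarrow X$. Therefore
\begin{equation*}
  \phi^*(A)|_{E_i} \ =\ \pr_1^*\bigl(A|_{P_i}\bigr),
\end{equation*}
and since $P_i \iso \bP^2$ this equals $\pr_1^*\dO(k_i(A))$ with $k_i(A) = A\cdot \ell$ for any line $\ell\subseteq P_i$. In particular the image lies entirely in $\pr_1^*\Pic(\bP^2)\subseteq\Pic(Z)$, so no $\pr_2^*\dO(*)$-contribution arises and it only remains to compute $k_i(A)$.

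The second step is to identify the linear form $A\mapsto A\cdot\ell$ with a scalar multiple of $(\,\_\,,W)_q$. By Theorem~\ref{thm:Wierzba}\ref{it:ray} the class $[\ell]\in N_1(X)$ generates the extremal ray $R_f$, which (by definition of $R_f$) is orthogonal under the intersection pairing to the wall of $\Nef(X)$ separating it from $f^*\Nef(X')$. By Remark~\ref{rem:WisWall} this wall coincides with $W^\perp\subseteq\Pic(X)_\bR$. Since the BBF-form is non-degenerate on $\Pic(X)_\bR$, the linear form $(\,\_\,,W)_q$ also has kernel $W^\perp$, so the two linear forms $A\mapsto A\cdot\ell$ and $A\mapsto (A,W)_q$ must be proportional:
\begin{equation*}
  A\cdot\ell \ =\ m_i\cdot (A,W)_q\qquad\text{for all }A\in\Pic(X),
\end{equation*}
with a scalar $m_i\in\bR$ depending only on $P_i$, not on $A$. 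I will settle positivity by testing on $D\coloneqq f^*(A')$ for an ample $A'\in\Pic(X')$: then $D\cdot\ell<0$ because $R_f$ is $D$-negative (cf.\ the proof of Theorem~\ref{thm:Wierzba}), while simultaneously $(D,W)_q<0$ because $D$ lies on the $\Nef(X')$-side of the wall; hence both sides of the proportionality have the same sign and $m_i>0$.

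For the lower bound $m_i\geq\tfrac12$, I combine positivity with an integrality constraint. Since $k_i(A)=m_i(A,W)_q$ must be an integer for every $A\in\Pic(X)$, it is enough to show that the image of $(\,\_\,,W)_q\colon\Pic(X)\to\bZ$ equals $2\bZ$. By Remark~\ref{rem:computediv} one computes $\div(W)=\gcd(2,2\cdot3\cdot1)=2$, so the image is contained in $2\bZ$; and the values $(H,W)_q=4$, $(\delta,W)_q=6$ give $\gcd(4,6)=2$, so this containment is an equality. Combined with $m_i>0$ this forces $2m_i\in\bZ_{\geq 1}$, i.e.\ $m_i\geq\tfrac12$. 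The main technical point I expect to require care is the sign check in the second step — one has to make sure that the direction of $W$ is aligned with the correct side of the wall, rather than its opposite — but this is taken care of cleanly by the numerical test against the pulled-back ample class as indicated.
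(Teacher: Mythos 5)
Your proposal is correct, and its core coincides with the paper's argument: you identify $\phi^*(A)|_{E_i}$ as $\pr_1^*(A|_{P_i})$ (the paper does the same computation by restricting to the tautological line $\Chat_i\subseteq E_i$ and using the projection formula, you via the factorization $\phi|_{E_i}=\iota_{P_i}\circ \pr_1$), and you then match the linear form $A\mapsto \deg(A|_\ell)$ with $m_i\cdot(\,\_\,,W)_q$ because both vanish on the wall class $3H-2\delta$ (Theorem~\ref{thm:Wierzba}\ref{it:ray} and Remark~\ref{rem:WisWall}). Where you genuinely diverge is the bound $m_i\geq \frac{1}{2}$: the paper first upgrades the proportionality from $\Pic(X)\dual$ to $H^2(X,\bZ)\dual$ (both functionals kill the transcendental lattice, Remark~\ref{rem:type-argument}) and then uses $\div(W)=2$ in the full lattice, so that $(\,\_\,,W)_q=2v$ with $v$ primitive forces $2m_i\in\bZ$; you instead stay inside $\Pic(X)$ and note that $(H,W)_q=4$ and $(\delta,W)_q=6$ already make the image of $(\,\_\,,W)_q|_{\Pic(X)}$ equal to $2\bZ$, so integrality of $m_i\cdot(A,W)_q$ gives $2m_i\in\bZ$ directly. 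Your route is more elementary (no transcendental-lattice input), at the price of relying on the explicit rank-two Picard lattice of this example, whereas the paper's argument uses only $\div(W)$ and hence would transfer to settings where the gcd over $\Pic(X)$ is not computed. One small point in your positivity step: that $D=f^*(A')$ lies on the $\Nef(X')$-side of the wall only tells you $(D,W)_q$ has constant sign there; that this sign is negative is an orientation statement about $W$ and requires one explicit value, e.g.\ $(L,W)_q=-2<0$ (equivalently $(H,W)_q=4>0$ on the opposite side, which is how the paper tests positivity via $\deg(H|_{C_i})>0$). This is a one-line addition rather than a gap.
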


\begin{proof}
Begin by observing that $\Pic(\bP^2 \times \bP^2\dual)\iso \Pic(\bP^2) \times \Pic(\bP^2\dual) \iso \bZ
\times \bZ$ (see e.g.~\cite[Exercise III.12.6]{Hartshorne}). 
Therefore, also $\Pic(Z) \iso \bZ\times \bZ$ by Lefschetz hyperplane theorem, which applies since $Z \in
|\dO(1,1)|$ is a smooth ample divisor. Therefore, $\phi^*(A)|_{E_i}$ corresponds to  a line bundle of the form
$\dO(s_i,t_i)$ on $Z$ for some integers $s_i$ and $t_i$.

Consider a line $C_i\subseteq P_i$ and  the induced line 
$\Chat_i\coloneqq \{(p_i,C_i)\mid p_i\in C_i\}\subseteq E_i$. 
Since $\phi^*(A)$ is pulled back from the first factor, $\phi^*(A)|_{E_i}$ corresponds to $\dO(s_i,0)$ for some $s_i\in \bZ$.

On the other hand
\begin{equation*}
  \deg \phi^*(A)|_{\Chat_i} =\deg A|_{\phi_*\Chat_i} = \deg A|_{C_i},
\end{equation*}
and therefore $\phi^*(A)|_{E_i}$ corresponds to $\dO(\deg A|_{C_i},0)=\pr_1^*\dO(\deg A|_{C_i})\in \Pic(Z)$. 

By Theorem \ref{thm:Wierzba}.\ref{it:ray}, the line $C_i$ represents a curve class associated to $f\colon X\to X'$ and thus
is orthogonal to the wall separating $\Amp(X)$ and $\Amp(X')$, i.e.\, it satisfies $\deg(3H-2\delta|_{C_i})=0$. 
On the other hand, we observed in Remark \ref{rem:WisWall} that  $(3H-2\delta,W)_q =0$. 
Consequently, there exists $m_i\in \bQ$ such that 
\begin{equation*}
\deg(\_ |_{C_i})= m_i\cdot (\_,W)_q\in \Pic(X)\dual.
\end{equation*}

 Note that automatically $m_i>0$, since $(H,W)_q=4>0$
 and $\deg(H|_{C_i})>0$ because $H$ is big and nef and does not vanish on $C_i$. 

In particular $\phi^*(A)|_{E_i}$ corresponds to $\pr_1^*\dO(\deg A|_{C_i})=\pr_1^*\dO(m_i \cdot (A,W)_q)$, as claimed.

In order to bound $m_i$, first  observe that $\div(W)=2$ (compare Remark \ref{rem:computediv}). Further
note that  $\deg(\_ |_{C_i})$ and $(\_,W)_q$ both vanish on the 
transcendental lattice of $X$ (see Remark \ref{rem:type-argument}). Therefore, $\deg(\_ |_{C_i})= m_i\cdot (\_,W)_q\in H^2(X,\bZ)\dual$ (and
not only in $\Pic(X)\dual$). 
 Therefore, $m_i \in \frac{1}{2}\cdot \bZ$ and thus $m_i\geq \frac{1}{2}$. 
\end{proof}

\begin{corollary} 
  \label{cor:compareLB}
  Consider an arbitrary line bundle $A\in \Pic(X)$ and  the associated line bundle $A'\in \Pic(X')$. Let
  $m_i$ for $i=1,\dots,r$ be the same constants as in Lemma \ref{lem:degphiA}. Then the following
  relation holds: 
  \begin{equation*}
    \phi'^*(A')=\phi^*(A) + \sum_{i=1}^r m_i \cdot (A,W)_q\cdot E_i.
  \end{equation*}
\end{corollary}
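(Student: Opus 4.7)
The plan is to exploit that $\Xh$ is an isomorphism onto both $X$ and $X'$ away from the exceptional divisors, so that the difference $\phi'^*(A') - \phi^*(A)$ must be supported on $\bigsqcup_i E_i$. The coefficient of each $E_i$ will then be pinned down by intersecting with a suitably chosen curve.

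First I would argue that $\phi^*(A)$ and $\phi'^*(A')$ coincide outside $\bigsqcup_i E_i$. Indeed, by Lemma \ref{lem:biratisoonH2} the line bundles $A$ and $A'$ are identified on the common open subset $X \setminus P \iso X' \setminus P'$, and the morphisms $\phi$ and $\phi'$ both restrict to isomorphisms between $\Xh \setminus \bigsqcup_i E_i$ and this open subset. Since $\Xh$ is smooth and the $E_i$ are irreducible divisors, this forces
\[
D \coloneqq \phi'^*(A') - \phi^*(A) = \sum_{i=1}^r a_i E_i
\]
for some integers $a_i$.

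Next I would determine the $a_i$ by computing $\deg(D|_{\Chat_i})$ in two ways, using the same curves $\Chat_i \subseteq E_i$ that appear in the proof of Lemma \ref{lem:degphiA}. Under the identification $E_i \iso Z \subseteq P_i \times P'_i$, the curve $\Chat_i$ is a fiber of $\pr_2$, hence of $\eta'$, so $\phi'$ contracts it and $\deg(\phi'^*(A')|_{\Chat_i}) = 0$. Lemma \ref{lem:degphiA} supplies $\deg(\phi^*(A)|_{\Chat_i}) = m_i (A,W)_q$, so one side yields $\deg(D|_{\Chat_i}) = -m_i (A,W)_q$. On the other side, Lemma \ref{lem:MukaiflopOEi} identifies $E_i|_{\Chat_i}$ with $\dO(-1,-1)|_{\Chat_i}$, which has degree $-1$ on the $\pr_2$-fiber $\Chat_i$, and since the $E_j$ are pairwise disjoint, only $a_i E_i$ contributes: $\deg(D|_{\Chat_i}) = -a_i$. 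Comparing gives $a_i = m_i (A,W)_q$, which is the claimed identity.

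The argument is largely a bookkeeping exercise once the geometric identifications are in place. The only point requiring care is verifying that $\Chat_i$ is a fiber of $\eta'$ (rather than of $\eta$), which follows directly from the construction of the Mukai flop recalled in Section \ref{ssec:Mukaiflops}, where $\eta$ and $\eta'$ correspond to the two projections from $Z$.
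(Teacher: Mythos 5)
Your proposal is correct and follows essentially the same route as the paper: both write $\phi'^*(A')-\phi^*(A)=\sum_i a_i E_i$ using that the two pullbacks agree away from the exceptional divisors, and both pin down $a_i$ from Lemma \ref{lem:degphiA}, Lemma \ref{lem:MukaiflopOEi}, and the triviality of $\phi'^*(A')$ in the $\eta'$-direction. The only cosmetic difference is that you extract the coefficient by intersecting with the fibre curve $\Chat_i$, whereas the paper compares the full restrictions to $E_i$ in $\Pic(Z)\iso\bZ^2$; the computation is the same.
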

\begin{proof}
The variety $\Xhat$ is the blow-up of $X$ in the $P_i$ (for $i=1,\dots,r$). Therefore
\begin{equation*}
\Pic(\Xhat)\iso \Pic(X)\oplus \bigoplus_{i=1}^r E_i.
\end{equation*}

Since $\phi^*(A)$ and $\phi'^*(A')$ coincide outside the exceptional loci, they only differ by multiples
of the $E_i$, i.e.\, there are integers $\gamma_i$ such that 
\begin{equation*}
\phi'^*(A')= \phi^*(A) + \sum_{i=1}^r \gamma_i\cdot  E_i.
\end{equation*}
The pullback $\phi'^*(A')|_{E_i}$ corresponds to $\dO(0,t_i')$ for some $t_i'\in \bZ$, since it is trivial
on the fibres of $\phi'$. On the other hand $\phi^*(A)|_{E_i}$ corresponds to $\dO( m_i \cdot (A,W)_q ,0)$ by
Lemma \ref{lem:degphiA}. Furthermore it is known that $E_i|_{E_i}$ corresponds to $\dO(-1,-1)$ via the
isomorphism $E_i\iso Z\subseteq \bP^2 \times \bP^2\dual$ (compare Lemma \ref{lem:MukaiflopOEi}), and of course $E_j|_{E_i}$ is trivial for all $i\neq j$.  

Comparing the degrees on $E_i$ thus gives that $\gamma_i=m_i \cdot (A,W)_q$. This
completes the proof.
\end{proof}

\begin{corollary} \label{cor:degphiA'}
Let $A'\in \Pic(X')$ be an arbitrary line bundle. Then the same  $m_i$ as in Lemma \ref{lem:degphiA}
($i=1,\dots,r$) satisfy
  \begin{equation*}
    \Pic(E_i)\to \Pic(Z), \quad \phi'^*(A')|_{E_i} \mapsto \pr_2^*\dO(- m_i \cdot (A',W')_q).
  \end{equation*}
\end{corollary}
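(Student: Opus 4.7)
The plan is to derive Corollary~\ref{cor:degphiA'} as a direct consequence of Corollary~\ref{cor:compareLB}, combined with the explicit restriction formulas for the exceptional divisors that were already established. Since the statement concerns the restriction of $\phi'^*(A')$ to $E_i$, the natural first step is to rewrite $\phi'^*(A')$ in terms of $\phi^*(A)$ and the $E_j$'s (where $A\in \Pic(X)$ is the line bundle corresponding to $A'$ under the birational transform), then restrict everything to $E_i$ and compute.

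More concretely, I would first invoke Corollary~\ref{cor:compareLB} to write
\begin{equation*}
  \phi'^*(A') = \phi^*(A) + \sum_{j=1}^r m_j\cdot (A,W)_q\cdot E_j,
\end{equation*}
and then restrict both sides to $E_i\iso Z\subseteq \bP^2\times \bP^{2\vee}$. On the right-hand side, I can use Lemma~\ref{lem:degphiA} to identify $\phi^*(A)|_{E_i}$ with $\pr_1^*\dO(m_i\cdot (A,W)_q)$, Lemma~\ref{lem:MukaiflopOEi} to identify $E_i|_{E_i}$ with $\dO(-1,-1)=\pr_1^*\dO(-1)\otimes \pr_2^*\dO(-1)$, and the obvious fact that $E_j|_{E_i}$ is trivial for $j\neq i$. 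Combining these, the $\pr_1^*$-contributions cancel (the $+m_i(A,W)_q$ from $\phi^*(A)|_{E_i}$ cancels the $-m_i(A,W)_q$ from the $\pr_1$-part of $m_i(A,W)_q\cdot E_i|_{E_i}$), leaving only
\begin{equation*}
  \phi'^*(A')|_{E_i} \;\longmapsto\; \pr_2^*\dO\bigl(-m_i\cdot (A,W)_q\bigr).
\end{equation*}

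The only remaining point is to replace $(A,W)_q$ by $(A',W')_q$. This is immediate from Lemma~\ref{lem:biratisoonH2}: the isomorphism $H^2(X,\bZ)\iso H^2(X',\bZ)$ induced by the birational map identifies $A\leftrightarrow A'$ and $W\leftrightarrow W'$ and preserves the Beauville--Bogomolov--Fujiki form (it is an isomorphism of lattices), so $(A,W)_q=(A',W')_q$. No part of this argument is really an obstacle; it is essentially pure bookkeeping, and the only thing one must be careful about is the sign arising from the $\pr_1$-summand of $\dO(-1,-1)$, which is exactly what swaps the role of $\pr_1$ (as in Lemma~\ref{lem:degphiA}) with $\pr_2$ and produces the minus sign in the statement.
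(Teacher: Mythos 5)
Your proposal is correct and follows exactly the paper's own argument: restrict the identity of Corollary~\ref{cor:compareLB} to $E_i$, apply Lemma~\ref{lem:degphiA} and Lemma~\ref{lem:MukaiflopOEi} (plus triviality of $E_j|_{E_i}$ for $j\neq i$) so that the $\pr_1$-parts cancel, and use that the birational correspondence of Lemma~\ref{lem:biratisoonH2} identifies $(A,W)_q$ with $(A',W')_q$. Nothing is missing.
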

\begin{proof}
 Let $A \in \Pic(X)$ be the line bundle corresponding to $A'$ and fix $i\in \{1,\dots,r\}$. Then Corollary
 \ref{cor:compareLB} shows that  
\begin{align*}
  \phi'^*(A')|_{E_i}=\phi^*(A)|_{E_i}+ \sum_{j=1}^r m_j \cdot (A,W)_q\cdot E_j|_{E_i}.
\end{align*}
By Lemma \ref{lem:degphiA} the right hand side corresponds to
\begin{align*}
  \dO(& m_i\cdot (A,W)_q ,0) + m_i \cdot (A,W)_q\cdot \dO(-1,-1)\\
  &= \dO( 0,-m_i \cdot (A,W)_q)
   =\pr_2^*\dO(-m_i \cdot (A',W')_q)
\end{align*}
as claimed.
\end{proof}

\begin{proposition}\label{prop:okinspeccase}
Let $S$ be a K3 surface with $\Pic(S)\iso \bZ\cdot H_S$ for an ample line bundle $H_S$ with
$(H_S)^2=2$. Let $X$ be $\Hilb^2(S)$, and $X'$ be  the unique other irreducible symplectic variety which is
birational to $X$ (as in Lemma \ref{lem:cones}). Then all nef line bundles on $X'$ are base point free.
\end{proposition}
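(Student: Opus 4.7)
By Lemma \ref{lem:cones}\ref{it:cones-d}, $\Nef(X') = \bR_{\geq 0}\cdot L' + \bR_{\geq 0}\cdot (3H'-2\delta')$, with both generators $L' = H' - \delta'$ and $3H'-2\delta'$ primitive in the rank-two lattice $\Pic(X')$. Every integral nef class on $X'$ is therefore of the form $aL' + b(3H'-2\delta')$ for $a,b \in \bZ_{\geq 0}$, and since a tensor product of base point free line bundles is base point free (products of non-vanishing sections are non-vanishing), it suffices to prove the statement for each generator separately.

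The generator $L'$ is handled directly by Matsushita's Theorem \ref{thm:Matsushita}: $q(L') = q(H-\delta) = q(H) + q(\delta) = 2 + (-2) = 0$, and $L'$ is primitive and nef, so $L'$ is base point free and induces a Lagrangian fibration $\psi'\colon X' \to \bP^2$.

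For $3H' - 2\delta'$ I pass to the common resolution $\hat X$ of the Mukai flop diagram. Since $(3H-2\delta, W)_q = 0$, Corollary \ref{cor:compareLB} gives $\phi'^*(3H'-2\delta') = \phi^*(3H-2\delta)$. Writing $3H - 2\delta = H + 2L$ and using $(L,W)_q = -2$ together with $m_i = \tfrac{1}{2}$ (forced by $m_i \in \tfrac{1}{2}\bZ$ from Lemma \ref{lem:degphiA} and the explicit value $\deg(H|_{C_i}) = 2$), Corollary \ref{cor:compareLB} yields $\phi^*(L) = \phi'^*(L') + \sum_i E_i$, hence
\begin{equation*}
\phi'^*(3H'-2\delta') = \phi^*(H) + 2\phi'^*(L') + 2\sum\nolimits_i E_i \quad\text{on }\hat X.
\end{equation*}
The summand $M := \phi^*(H) + 2\phi'^*(L')$ is base point free on $\hat X$ as a tensor product of pullbacks of base point free line bundles ($H$ by Lemma \ref{lem:Hbpfinexample}, $L'$ by the previous step). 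Multiplying a non-vanishing section of $M$ at any $\hat z \in \hat X \setminus \bigcup_i E_i$ by the canonical section of the effective exceptional divisor $2\sum_i E_i$ produces a section of $\phi'^*(3H'-2\delta')$ non-vanishing at $\hat z$; pushing down via $\phi'$ yields base point freeness of $3H' - 2\delta'$ on $X' \setminus \bigcup_i P'_i$.

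It remains to exhibit, for each $i$, a section of $3H' - 2\delta'$ not vanishing on $P'_i$. By Corollary \ref{cor:degphiA'} applied with $(3H'-2\delta', W')_q = 0$, the restriction $\phi'^*(3H'-2\delta')|_{E_i}$ is trivial, so $(3H' - 2\delta')|_{P'_i} \cong \dO_{P'_i}$ and each section of $3H' - 2\delta'$ restricts to a constant on $P'_i$, vanishing either identically or nowhere. I expect this final step to be the main obstacle: it amounts to surjectivity of the restriction $H^0(X', 3H' - 2\delta') \twoheadrightarrow H^0(P'_i, \dO_{P'_i}) = \bC$. I would tackle it either by constructing an explicit non-vanishing section from a natural auxiliary morphism built out of the degree-two cover $\pi \colon S \to \bP^2$, or via a cohomological argument: Kawamata--Viehweg applied to the big and nef class $3H'-2\delta'$ gives $H^1(X', 3H'-2\delta') = 0$, reducing the surjectivity to the vanishing $H^1(X', I_{P'_i}(3H'-2\delta')) = 0$, which one would verify by a direct computation on $\hat X$ exploiting the decomposition above. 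Once this is established, base point freeness of $3H' - 2\delta'$ on all of $X'$ follows, and combined with the base point freeness of $L'$ this completes the proof.
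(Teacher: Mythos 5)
Your reduction to the two extremal rays is legitimate here, and your treatment of $L'$ via Theorem \ref{thm:Matsushita} matches what the paper does for that ray. But the proposal has a genuine gap exactly where you flag it: you never prove that some section of $3H'-2\delta'$ is non-vanishing along $P'_i$, i.e.\ that the restriction $H^0(X',3H'-2\delta')\to H^0(P'_i,\dO_{P'_i})$ is surjective. This is not a routine leftover; it is the heart of the proposition. Everything you do prove (base point freeness away from the flopped locus, triviality of the restriction to $P'_i$) is soft, whereas extending sections across the indeterminacy locus is where all the work lies. The paper settles precisely this point on $\Xhat$: it shows $H^0(\Xhat,\phi'^*(A'))\to\bigoplus_i H^0(E_i,\phi'^*(A')|_{E_i})$ is surjective because $H^1(\Xhat,\phi'^*(A')-\sum_i E_i)=0$, which by Kodaira vanishing (using $\omega_{\Xhat}=\sum_i E_i$, Remark \ref{rem:omegaofMukaihat}) reduces to checking that $\phi'^*(A')-2\sum_i E_i$ is big and nef; that verification — bigness from $a\,\phi^*(H)$ together with $4m_ia-2\geq 0$, and nefness via a case analysis of curves inside and outside the $E_i$, using only the bound $m_i\geq \tfrac12$ and Corollaries \ref{cor:compareLB} and \ref{cor:degphiA'} — is the bulk of the paper's proof and is exactly the ``direct computation on $\Xhat$'' that you defer. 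Note that your second sketched route, if carried out, would reproduce it: pushing forward along $\phi'$ identifies $H^1(X',\fI_{P'}\otimes(3H'-2\delta'))$ with $H^1(\Xhat,\phi'^*(3H'-2\delta')-\sum_i E_i)$, and invoking Kawamata--Viehweg merely for $H^1(X',3H'-2\delta')=0$ does not touch the ideal-sheaf vanishing you actually need. As it stands, the proof is incomplete.

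Two smaller points. First, you justify passing to the generators by their primitivity; that alone is not sufficient in a rank-two lattice. It does work here because $L'$ and $3H'-2\delta'$ form a $\bZ$-basis of $\Pic(X')$ (determinant $1$ with respect to $(H',\delta')$), so every integral nef class is a non-negative integral combination — state this. Second, you use $m_i=\tfrac12$ via ``$\deg(H|_{C_i})=2$'', but that degree is only established in Section \ref{sec:geometryEX} of the paper (Lemmas \ref{lem:C.H} and \ref{lem:miishalf}) through the explicit description of $P$ as the image of the graph of the covering involution; at the stage of this proposition, Lemma \ref{lem:degphiA} only gives $m_i\in\tfrac12\bZ_{>0}$. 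Fortunately your argument does not need the exact value: with general $m_i$ the decomposition reads $\phi'^*(3H'-2\delta')=\phi^*(H)+2\phi'^*(L')+\sum_i 4m_i E_i$ with $4m_i\in\bZ_{\geq 2}$, which is all you use off the exceptional locus — but then the unproved surjectivity step above remains the missing core.
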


\begin{proof}
  Fix a nef line bundle $A'\in \Pic(X')$ and denote the associated line bundle on $X$ by $A\in
  \Pic(X)$. Since $L'$ and $H'$ span all integral elements in the birational Kähler cone
  $\birKbar_{X'}\cap \Pic(X')_\bR=\langle H', L'\rangle$, there exist non-negative
  integers $a,b \in \bZ_{\geq 0}$ for which $A'=aH'+bL'$.  Since $A'$ is nef by assumption, $A'\in
  \Nef(X')=\langle H'+2L',L'\rangle$ (compare Lemma \ref{lem:cones}.\ref{it:cones-d}), and therefore
  \begin{equation}\label{eq:ab}
a \leq \frac{1}{2}b.
\end{equation}

  In the case $a=0$, observe that $A'$ is a multiple of $L'$ and therefore base point free by Theorem
  \ref{thm:Matsushita}. Thus we may assume that $a>0$.

  Since $\phi'$ is a dominant morphism of projective varieties with connected fibres the pull-back
  $\phi'^*\colon H^0(X',A')
  \overset{\iso}{\too} H^0(\Xhat, \phi'^*(A'))$ is an isomorphism.
  Therefore, it suffices to prove that $\phi'^*(A')\in \Pic(\Xhat)$ is base point free, in order to show base point
  freeness of $A'\in \Pic(X')$.
  Working on $\Xhat$ instead, has the advantage that we know $\phi'^*(L')$ and $\phi^*(H)$ are both base
  point free (compare Theorem \ref{thm:Matsushita} and Lemma \ref{lem:Hbpfinexample}).

  Use Corollary \ref{cor:compareLB} to see
  \begin{align}
    \phi'^*(A')=  b\cdot\phi'^*(L') + a\cdot \phi'^*(H')
    = b\cdot \phi'^*(L') + a \cdot \phi^*(H)+ a\cdot \sum_{i=1}^r 4 m_i \cdot E_i \, . \label{eq:pullbackA}
  \end{align}
The fact that $\phi'^*(L')$ and $\phi^*(H)$ are base point free immediately implies that the base
locus of $\phi'^*(A')$ is contained in the union of the $E_i$.

 Note that 
\begin{equation}\label{eq:(A,W)q}
(A', W')_q=(aH'+ bL',2H'-3\delta')_q=4a-2b \leq 0,
\end{equation}
where the last inequality follows from \eqref{eq:ab}.
Consequently by Corollary \ref{cor:degphiA'} the restriction of $\phi'^*(A')|_{E_i}$ corresponds to
\begin{equation*}
   \dO(0, -m_i\cdot (A',W')_q)=\dO(0,\underbrace{-m_i\cdot (4a-2b)}_{\geq 0}),
\end{equation*}
and thus the restriction $\phi'^*(A')|_{E_i}$ is base point free.

On the other hand, consider the restriction sequence
\begin{equation*}
  0\to \dO\big( \phi'^*(A')-\sum_{i=1}^r E_i\big) \to \phi'^*(A') \to \bigoplus_{i=1}^r \phi'^*(A')|_{E_i}\to 0 \quad
  \in \Coh(\Xh). 
\end{equation*}
This induces an exact sequence
\begin{equation*}
  H^0(\Xhat, \phi'^*(A')) \to \bigoplus_{i=1}^r H^0\big(E_i,\phi'^*(A')|_{E_i}\big) \to H^1\Big(\Xhat, \phi'^*(A')-\sum_{i=1}^r E_i\Big).
\end{equation*}
Therefore, it is sufficient to prove that $H^1(\Xhat, \phi'^*(A')-\sum_i E_i)=0$, since then the above
restriction is surjective. This implies that
$\phi'^*(A')$ has 
no base points in the union of the $E_i$, which was the only thing left to see.

Note that Remark \ref{rem:omegaofMukaihat} implies that $\omega_{\Xhat}=\sum_i E_i$.
By Kodaira vanishing the cohomology group $H^1(\Xhat, \phi'^*(A')-\sum_i E_i)$ vanishes if
$\phi'^*(A')-\sum_i E_i -
\omega_{\Xhat} = \phi'^*(A')-2 \sum_i E_i$ is big and nef. 

For the rest of the proof we show that $\phi'^*(A')-2 \sum_i E_i$ is big and nef.

First note that by \eqref{eq:pullbackA}, we know that 
\begin{equation*}
\phi'^*(A')-2\sum_{i=1}^r E_i = b\cdot \phi'^*(L') + a \cdot \phi^*(H)+  \sum_{i=1}^r (4 m_i a - 2 )\cdot E_i.
\end{equation*}
To see that this is big, observe that  $a\cdot\phi^*(H)$ is big since $a> 0$ and $H$ is big (by Lemma \ref{lem:Hbpfinexample}), and note that $4m_ia-2\geq 0$, since $m_i\geq \frac{1}{2}$.

Thus we only need to check that $\phi'^*(A')-2 \sum_i E_i$ is nef. 
Let $C\subseteq \Xhat$ be a curve. 
If $C\subseteq E_{i_0}$ lies in the exceptional divisor for some $i_0$, then 
\begin{equation*}
  \deg((\phi'^*(A')-2 \sum_i E_i)|_C)> 0,
\end{equation*}
 since by Corollary \ref{cor:degphiA'}   the
restriction $\big(\phi'^*(A')-2 \sum_i E_i\big)|_{E_{i_0}}$ corresponds to 
\begin{equation*}
 \dO(2, -m_{i_0}\cdot (A',W')_q+2) 
\overset{\eqref{eq:(A,W)q}}{=}\dO(2,\underbrace{-m_{i_0}\cdot (4a-2b)}_{\geq 0}+2).
\end{equation*}

Suppose on the other hand that $C\subseteq \Xhat$ is not contained in one of the exceptional divisors.
Then the image of $C$ under $\phi$ is a non-trivial curve and thus use Corollary \ref{cor:compareLB} and
projection formula to see:
\begin{align*}
0< \deg(H.\phi_*C)
&=\deg(\phi^* H. C) 
= \deg\Big(\big(\phi'^*(H')- \sum_{i=1}^r 4 m_i \cdot E_i\big). C\Big)\\
&= \deg(H'.\phi'_*C) - \sum_{i=1}^r 4 m_i \deg(E_i.C).
\end{align*}

In particular this implies that 
\begin{align*}
\deg\Big( \big(&\phi'^*(A')-2 \sum_{i=1}^r E_i\big).C\Big)
= a \underbrace{\deg(\phi'^*H'.C)}_{=\deg(H'.\phi'_*C)} + b \deg(\phi'^*L'.C) - 2 \sum_{i=1}^r \deg(E_i.C)\\
&> \sum_{i=1}^r (4 a m_i -2) \deg(E_i.C)+ b \deg(\phi'^*L'.C).
\end{align*}

 Note that $4am_i-2\geq 0$, since $a>0$ and $m_i\geq \frac{1}{2}$.
Since furthermore $\deg(E_i.C)\geq 0$ for all $i$, because $C$ is not
 contained in any of the $E_i$, the left term is at least zero.
 Additionally, the right term is at least zero, since $L'$ is a nef line bundle on $X'$, and thus $\deg(\phi'^*L'.C) = \deg(L'.\phi'_*C) \geq 0$.
 This shows that $\phi'^*(A')-2 \sum_i E_i$ is nef, and concludes the proof.
\end{proof}

We also prove the following proposition:

\begin{proposition}\label{prop:mostlyokonX}
  Let $X$ be as in Proposition \ref{prop:okinspeccase}. Then all nef line bundles $A =aH+bL\in \Pic(X)$, which
  satisfy $aH+bL \neq H+L$ are base point free.
\end{proposition}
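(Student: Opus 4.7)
The plan is to adapt the proof of Proposition \ref{prop:okinspeccase} to $X$ itself, exchanging the roles of $X$ and $X'$, and to isolate the step where the combinatorial bound uses the assumption $A \neq H + L$. I write $A = aH + bL$; by Lemma \ref{lem:cones}, nefness on $X$ forces $a, b \in \bZ_{\geq 0}$ with $2a \geq b$, and the excluded line bundle $H+L$ corresponds to $(a,b) = (1,1)$. If $b = 0$, then $A = aH$ is a non-negative multiple of the base-point-free $H$ (Lemma \ref{lem:Hbpfinexample}), so base-point-freeness is immediate; hence I may assume $b \geq 1$, and consequently $a \geq 1$.

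Since $\phi\colon \Xhat \to X$ is a birational morphism of smooth projective varieties with connected fibres, $\phi^*$ identifies $H^0(X, A)$ with $H^0(\Xhat, \phi^*A)$, so it will suffice to show base-point-freeness of $\phi^*A$ on $\Xhat$. Using $(L,W)_q = -2$ in Corollary \ref{cor:compareLB}, I obtain $\phi^*L = \phi'^*L' + 2\sum_i m_i E_i$, yielding the decomposition
\begin{equation*}
\phi^*A \;=\; a\phi^*H + b\phi'^*L' + 2b\sum_{i=1}^r m_i E_i.
\end{equation*}
Both $\phi^*H$ and $\phi'^*L'$ are base-point-free on $\Xhat$ (by Lemma \ref{lem:Hbpfinexample}, respectively Theorem \ref{thm:Matsushita} applied to $L' \in \Pic(X')$), and the residual summand is effective, so $\mathrm{Bs}(\phi^*A) \subseteq \bigcup_i E_i$. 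To rule out remaining base points, I will use the restriction sequence
\begin{equation*}
0 \to \phi^*A - \sum_i E_i \to \phi^*A \to \bigoplus_i \phi^*A|_{E_i} \to 0,
\end{equation*}
and check that (i) each $\phi^*A|_{E_i}$ is base-point-free, and (ii) $H^1(\Xhat, \phi^*A - \sum_i E_i) = 0$. Part (i) is immediate from Lemma \ref{lem:degphiA}: $\phi^*A|_{E_i} \cong \pr_1^*\cO_{\bP^2}((4a-2b)m_i)$ is base-point-free on $Z \cong E_i$ since $(4a-2b)m_i \geq 0$ by nefness. For (ii), Kodaira vanishing (with $\omega_{\Xhat} = \sum E_i$ from Remark \ref{rem:omegaofMukaihat}) reduces the task to showing that $\phi^*A - 2\sum E_i$ is big and nef.

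Bigness is clear from the summand $a\phi^*H$ with $a \geq 1$. For nefness I will work with
\begin{equation*}
\phi^*A - 2\sum_i E_i \;=\; a\phi^*H + b\phi'^*L' + \sum_i (2bm_i - 2)E_i,
\end{equation*}
handle curves inside some $E_{i_0}$ directly (the restriction $\cO((4a-2b)m_{i_0}+2,\,2)$ on $Z$ is nef), and, on curves $C \not\subset \bigcup E_i$, exploit that $\deg(H.\phi_*C)$, $\deg(L'.\phi'_*C)$ and $\deg(E_i.C)$ are all non-negative. When $b \geq 2$, the coefficient $2bm_i - 2 \geq b - 2 \geq 0$ (since $m_i \geq 1/2$ by Lemma \ref{lem:degphiA}), so all three contributions are non-negative and nefness follows. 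The hard case is $b = 1$: here $2m_i - 2 = -1$ whenever $m_i = 1/2$, and the naive bound fails precisely for $(a,b) = (1,1)$. The hypothesis $A \neq H+L$ supplies the extra inequality $a \geq 2$, providing an additional copy of $\phi^*H$ that I expect to absorb the negative $E_i$-contributions via the identity $\deg(H'.\phi'_*C) = \deg(H.\phi_*C) + 4\sum m_i \deg(E_i.C)$ together with $\deg(H.\phi_*C)\geq 0$. The technical heart of the proof will lie in this final numerical comparison; its breakdown for $(a,b) = (1,1)$ is in accordance with the non-trivial base locus of $H + L$ established in Section \ref{sec:2dimBL}.
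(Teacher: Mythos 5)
Your argument agrees with the paper's for $b=0$ and for $b\geq 2$: in those cases the decomposition via Corollary \ref{cor:compareLB}, the restriction to the $E_i$, and the Kodaira-vanishing reduction to bigness and nefness of $\phi^*A-2\sum_i E_i$ is exactly what the paper does, and your numerics there are correct.

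The genuine gap is the case $b=1$, $a\geq 2$, which is precisely the case your sketch leaves to a ``final numerical comparison'' that does not go through as described. With $m_i=\tfrac12$ (which actually occurs, Lemma \ref{lem:miishalf}), the class in question is $\phi^*(aH+L)-2\sum_i E_i=a\phi^*H+\phi'^*L'-\sum_i E_i$, and the only positivity facts at your disposal are $\deg(H.\phi_*C)\geq 0$, $\deg(L'.\phi'_*C)\geq 0$ and $\deg(E_i.C)\geq 0$ for $C\not\subset\bigcup_i E_i$; these give no upper bound on $\deg(E_i.C)$, so they cannot yield $a\deg(H.\phi_*C)+\deg(L'.\phi'_*C)\geq\sum_i\deg(E_i.C)$. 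The identity $\deg(H'.\phi'_*C)=\deg(H.\phi_*C)+4\sum_i m_i\deg(E_i.C)$ does not help, because $H'$ is \emph{not} nef on $X'$ (its nef cone is $\langle H'+2L',L'\rangle$), so $\deg(H'.\phi'_*C)\geq 0$ is not available. In fact one checks that $\phi^*(aH+L)-2E$ is not a non-negative combination of $\phi^*\Nef(X)$, $\phi'^*\Nef(X')$ and $E$ for any $a$: writing $\phi^*(N_1)+\phi'^*(cH'+dL')=\phi^*(N_1+cH+dL)+(2c-d)E$ with $d\geq 2c\geq 0$, the $E$-coefficient $2c-d=-2$ forces $d\geq 2$, while matching the $L$-coefficient of $aH+L$ forces $d\leq 1$. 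So no formal argument of this shape can prove the needed nefness (and bigness is also no longer ``clear from $a\phi^*H$'', since the residual divisor $\sum_i(2m_ib-2)E_i$ is anti-effective when $b=1$, $m_i=\tfrac12$); this is exactly the obstruction the paper flags in Remark \ref{rem:halfisproblem}. The paper handles $b=1$, $a\geq 2$ by a different mechanism altogether: the tautological bundle $(kH_S)^{[2]}$ has $\det((kH_S)^{[2]})=(k-1)H+L$ and is globally generated as soon as $kH_S$ is very ample, which holds for $k\geq 3$ by Saint-Donat; hence $aH+L$ is base point free for $a\geq 2$. To complete your proof you would either need this (or some other) geometric input, or a direct control of $\deg(E.C)$ against $\deg((aH+L).\phi_*C)$ for curves meeting $P$, which the formal intersection-theoretic relations alone do not provide.
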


\begin{proof}
  First consider the case $b\neq 1$. Then the proof relies on similar arguments as the proof of Proposition
  \ref{prop:okinspeccase}. 

Note that an arbitrary nef line bundle can be expressed in the form $A=aH+bL$. Applying Lemma
\ref{lem:cones}, the condition that
$A$ is nef yields
\begin{equation}\label{eq:2ageqb}
  a \geq \frac{1}{2} b.
\end{equation}
We may assume that $b\neq 0$, since for multiples of $H$, we already know base point freeness by
Lemma \ref{lem:Hbpfinexample}. Since we further assumed $b\neq 1$, we are in the case $b\geq 2$.

Again, we may check base point freeness of $\phi^*(A)$ instead.

Using Corollary \ref{cor:compareLB} gives
  \begin{align}
    \phi^*(A)=a\cdot \phi^*(H) + b\cdot\phi^*(L)
    =  a \cdot \phi^*(H)+ b\cdot \phi'^*(L') +b\cdot \sum_{i=1}^r 2 m_i \cdot E_i \, . \label{eq:phiA}
  \end{align}
In particular the base locus of $\phi^*(A)$ is contained in the $E_i$.

By Lemma \ref{lem:degphiA} the restriction $\phi^*(A)|_{E_i}$ corresponds to
\begin{equation}\label{eq:restrA}
  \dO\big(m_i\cdot (A,W)_q,0\big)= \dO(\underbrace{4a-2b}_{\geq 0},0),
\end{equation}
which is globally generated since by \eqref{eq:2ageqb} both integers are at least zero.

Thus we only need to show that the restriction
\begin{equation*}
H^0(\Xhat, \phi^*(A)) \to \bigoplus_{i=1}^r H^0\big(E_i,\phi^*(A)|_{E_i}\big)
\end{equation*}

is surjective. As in the proof of
Proposition \ref{prop:okinspeccase} one can use Kodaira vanishing to see that it suffices to prove 
that $\phi^*(A)-2\sum_iE_i$ is big and nef. 

Since by assumption $b>1$, 
bigness of $\phi^*(A)-2\sum_iE_i$ follows from \eqref{eq:phiA}, since under these conditions
$2m_ib-2 \geq 0$. 

For nefness, consider a curve $C\subseteq \Xhat$, and distinguish again between two cases: 
If $C\subseteq E_{i_0}$ for some $i_0$, then deduce from \eqref{eq:restrA}, that $\phi^*(A)-2\sum_iE_i$
restricts with positive degree to $C$.

Otherwise, if $C$ is not contained in the exceptional locus of $\phi$, use nefness of $L'\in \Pic(X')$ to see
\begin{align*}
  0 \leq \deg(L'.\phi'_*C)
  &=\deg(\phi'^*L'.C) 
  = \deg(\phi^*L.C) + \sum_{i=1}^r  m_i \cdot (L,W)_q\cdot\deg(E_i.C)\\
  &=\deg (\phi^*L.C) - \sum_{i=1}^r 2m_i \cdot \deg(E_i.C).
\end{align*}

This implies that 
\begin{align*}
  \deg\Big((\phi^*A-2\sum_{i=1}^r E_i).C\Big)
  &= a \deg(\phi^*H.C) + b \deg(\phi^*L.C) - 2 \sum_{i=1}^r \deg (E_i.C)\\
  &\geq a \deg(H.\phi_*C) + \sum_{i=1}^r (2m_i\cdot b-2)\deg(E_i.C).
\end{align*}
Since $b>1$ both terms are at least zero, and thus $\phi^*A-2\sum_iE_i$ is nef. This concludes the proof in
the case $b\neq 1$.

For the case where $b=1$ (and $a\geq 2$, since $a=1$ is excluded in the statement of the theorem), we use the
tautological bundle $(kH_S)^{[2]}$, which is defined in the following way:
Consider $X=Hilb^2(S) \overset{p}{\leftarrow} Hilb^2(S)\times S\overset{q}{\to}S$, and let
$\Xi \subseteq Hilb^2(S)\times S$ be the universal family.
Then $(kH_S)^{[2]}\coloneqq p_*(\dO_{\Xi}\otimes q^*(H_S^{\otimes k}))$.
It is known that 
$\det ((kH_S)^{[2]})=kH-\delta = (k-1)H+L$ (see \cite{Lehn99}: Lemma 3.7 and the $c_1$-part of Theorem 4.6).
Furthermore, it is a basic observation, that $(kH_S)^{[2]}$ is generated by its global sections if $kH_S$ is
very ample. In this case $\det((kH_S)^{[2]})=(k-1)H+L$ is base point free.
Since $kH_S$ is very ample for all $k\geq 3$ (\cite[Theorem 8.3]{Saint-Donat74}), this implies that $aH+L$ is base point free for $a\geq 2$.
\end{proof}

\begin{remark}\label{rem:halfisproblem}
  Note that instead of assuming $b\neq 1$, we could theoretically also restrict ourselves to the case
  $m_i\neq \half$ in order to ensure the necessary positivity. However, $m_i= \half$ actually occurs in
  our example
  (compare Lemma \ref{lem:miishalf}).
\end{remark}

\section{First theorem on generic base point freeness}\label{sec:genbpf4}
In this section we prove  base point freeness for almost all polarizations on K3$^{[2]}$-type generically in
the moduli (see
Theorem \ref{thm:genbpf4}). This combines deformation results with the study of the particular example in
the last section.

\begin{definition}
  Let $\cM_{d,m}$ be the moduli space of $(d,m)$-polarized irreducible symplectic varieties of
  K3$^{[2]}$-type, i.e.~the space parametrizing pairs $(X,A)$, where $X$ is an irreducible symplectic
  variety of K3$^{[2]}$-type, and $A\in \Pic(X)$ is primitive and ample with $q(A)=2d$ and $\div(A)=m$.
\end{definition}
  The goal of this section is to prove the following theorem:
\begin{theorem}\label{thm:genbpf4}
Suppose $(d,m)\neq (3,2)$, then for generic pair $(X,A)\in \cM_{d,m}$ the line bundle $A\in \Pic(X)$ is
base point free (if $\cM_{d,m}\neq \emptyset$).
\end{theorem}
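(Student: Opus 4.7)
The plan is to combine Apostolov's deformation theorem (Proposition \ref{prop:candeform}) with the analysis of the special example $X=\Hilb^2(S)$, $(H_S)^2=2$, carried out in Section \ref{sec:BPinexample}. Since $\cM_{d,m}$ is irreducible by the corollary to Proposition \ref{prop:candeform}, and base point freeness is an open condition on any flat proper family of line bundles (the relative evaluation map $\pi^*\pi_*\cA\to\cA$ is a morphism of coherent sheaves and surjectivity is open), it is enough to produce, for each admissible $(d,m)\neq (3,2)$, a single K3$^{[2]}$-type variety $X_0$ carrying a primitive, big and nef, base point free line bundle $A_0\in\Pic(X_0)$ with $q(A_0)=2d$ and $\div(A_0)=m$. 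Indeed, Proposition \ref{prop:candeform} then yields a deformation $\cX\to T$ of $(X_0,A_0)$ whose generic fibre carries an ample polarization; openness transports base point freeness from the central fibre to a nearby ample one, and irreducibility of $\cM_{d,m}$ together with the same openness argument applied within $\cM_{d,m}$ itself yields generic base point freeness.

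It remains to supply the examples. A lattice computation in $\LambdaKEtwo=\LambdaKE\oplus\bZ\delta$ shows that $\cM_{d,m}\neq\emptyset$ forces either $m=1$ with $d\geq 1$, or $m=2$ with $d\equiv 3\pmod 4$ and $d\geq 3$. In the first case, pick any projective K3 surface $S$ with $\Pic(S)=\bZ\cdot H_S$ and $(H_S)^2=2d$: by Corollary \ref{cor:bpfifno0-class} the class $H_S$ is base point free, and hence by Lemmas \ref{lem:assLBisnef} and \ref{lem:assLBisbpf} the associated $H\in\Pic(\Hilb^2(S))$ is nef and base point free, primitive with $q(H)=2d$ and $\div(H)=1$. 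In the second case, for $d=4j-1$ with $j\geq 2$, take the special $X=\Hilb^2(S)$ with $(H_S)^2=2$ of Section \ref{sec:BPinexample} together with its unique other irreducible symplectic birational model $X'$, and set $A'\coloneqq 2jH'-(2j-1)\delta'\in\Pic(X')$. Then $A'$ is primitive (since $\gcd(2j,2j-1)=1$), satisfies $q(A')=2(4j^2-(2j-1)^2)=2d$, and by Remark \ref{rem:computediv} has $\div(A')=2$. The decomposition $A'=(3H'-2\delta')+(2j-3)(H'-\delta')$ together with Lemma \ref{lem:cones}(\ref{it:cones-d}) places $A'$ inside $\Nef(X')$, and Proposition \ref{prop:okinspeccase} then gives base point freeness.

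The excluded case $(d,m)=(3,2)$ corresponds precisely to $j=1$ in the second family, yielding the line bundle $2H'-\delta'$, or equivalently via the birational identification the class $H+L=2H-\delta\in\Pic(X)$ itself; by Proposition \ref{prop:mostlyokonX} this is exactly the only big and nef class on the special $X$ whose base point freeness is not addressed by Section \ref{sec:BPinexample} and is therefore deferred to the subsequent sections. The main delicate point in the argument above will be the lattice-theoretic classification of admissible $(d,m)$ and the verification that the two families of examples exhaust all possibilities allowed by the divisibility constraints; this in turn relies on tracking which primitive classes in $\LambdaKEtwo$ can carry a given divisibility, and on checking that the proposed $A'$ really lies in $\Nef(X')$ for every $j\geq 2$.
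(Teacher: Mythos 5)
Your proposal is correct and takes essentially the same route as the paper: reduce, via openness of base point freeness and connectedness/irreducibility of $\cM_{d,m}$ (Proposition \ref{prop:candeform}), to exhibiting a single primitive, big and nef, base point free class with the prescribed invariants, then handle $m=1$ by the associated line bundle on $\Hilb^2$ of a K3 surface of degree $2d$ and $m=2$ by the class $2kH'-(2k-1)\delta'=H'+(2k-1)L'\in\Nef(X')$ on the birational model of the special example, concluding with Proposition \ref{prop:okinspeccase}. The lattice verification you defer (that $\cM_{d,2}\neq\emptyset$ forces $d\equiv 3\pmod 4$, i.e.\ $d=4k-1$) is exactly the short computation in the paper's Lemma \ref{lem:exnice2}, so no genuine gap remains.
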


\begin{remark}
  As an immediate consequence, for a generic $(X,A)\in \cM_{d,m}$ with $(d,m)\neq (3,2)$ the line bundle
  $kA$ is base point free for arbitrary $k>0$.
\end{remark}

\begin{remark}
  Theorem \ref{thm:genbpf4} can be formulated in a different way:

  Let $X$ be an irreducible symplectic variety of K3$^{[2]}$-type and $A\in \cC_X\cap\Pic (X)$. Then a
  generic deformation of the pair $(X,A)$ will satisfy that the line bundle is base point free, unless 
  $q(A)=6k^2$ and $\div(A)=2k$ for some $k\in \bN$. Note that the last condition is equivalent to $A=k\cdot A'$ for
  primitive $A'$ with
  $q(A')=6$ and $\div(A')=2$.

  This reformulation follows from Proposition \ref{prop:candeform}.
\end{remark}

For the proof of this theorem we need two lemmas which provide ``nice'' pairs with given numerical properties.

The discriminant of the lattice $\Lambda\coloneqq\LambdaKEn$  is
two by Remark \ref{rem:discK3n}. Therefore, there are two possible values for the divisibility of a primitive line bundle $A$: Either
$\div(A)=1$ or $\div(A)=2$ (compare Remark \ref{rem:div-discr}). These two cases will be dealt with separately.

\subsection{Case 1: \texorpdfstring{$\div(A)=1$}{div(A)=1}}
\begin{lemma}\label{lem:exnice1}
  Fix $d\in \bZ>0$ and $m=1$.
  Then there exists a K3 surface $S$ with
  $\Pic(S)\iso \bZ\cdot A_S$ and $A_S\in \Pic(S)$ is ample, such that the associated line bundle
  $A\in\Pic(\Hilb^2(S))$ satisfies
  \begin{compactenum}
  \item $q(A)=2d$ and 
  \item $\div(A)=m=1$.
  \end{compactenum}
\end{lemma}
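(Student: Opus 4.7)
The plan is to reduce the statement to the standard existence result for K3 surfaces of Picard rank one with a prescribed self-intersection, and then to verify the two numerical conditions by direct computation in the decomposition of Proposition~\ref{prop:standard-decomposition-K3n}.

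First I would recall that by the surjectivity of the period map for K3 surfaces together with a standard density/existence argument for jumping to Picard rank one (or, equivalently, the fact that there exist projective K3 surfaces with $\Pic(S)\iso \bZ\cdot A_S$ and $(A_S)^2=2d$ for every $d>0$), one can produce a K3 surface $S$ with $\Pic(S)\iso \bZ\cdot A_S$ and $(A_S)^2=2d$. After possibly replacing $A_S$ by $-A_S$, Riemann--Roch on the K3 surface $S$ ensures that $A_S$ is effective, and then Nakai--Moishezon applied in rank one (any irreducible curve $C$ on $S$ is a positive multiple of $A_S$, so $(A_S,C)>0$) gives that $A_S$ is ample. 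This settles the existence of the pair $(S,A_S)$.

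Next I would verify the numerical conditions for the associated line bundle $A\in \Pic(\Hilb^2(S))$. Under the orthogonal decomposition
\begin{equation*}
\Pic(\Hilb^2(S))\iso \Pic(S)\oplus \bZ\cdot \delta
\end{equation*}
of Proposition~\ref{prop:standard-decomposition-K3n}, the line bundle $A$ corresponds to $(A_S,0)$. Since the embedding $H^2(S,\bZ)\hookrightarrow H^2(\Hilb^2(S),\bZ)$ is an isometry onto its image, we obtain $q(A)=(A_S)^2=2d$, which gives condition~(a).

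Finally, for the divisibility I would apply Remark~\ref{rem:computediv}: writing $A = 1\cdot A_S + 0\cdot \delta$ with $A_S$ primitive in $H^2(S,\bZ)$ (which is automatic since $\Pic(S)=\bZ\cdot A_S$ and $H^2(S,\bZ)$ is unimodular, but in any case $A_S$ can be chosen primitive), one has
\begin{equation*}
\div(A)=\gcd(1,\,2\cdot 0\cdot (2-1))=1,
\end{equation*}
which is condition~(b). There is no real obstacle here; the only mildly non-trivial input is the construction of a Picard rank one K3 surface with a chosen ample square, which is a classical consequence of the Torelli theorem and the surjectivity of the period map.
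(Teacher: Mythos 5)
Your proposal is correct and follows essentially the same route as the paper: take a Picard rank one K3 surface with $(A_S)^2=2d$, note that $A_S$ or $-A_S$ is ample, pass to the associated line bundle on $\Hilb^2(S)$, and read off $q(A)=2d$ and $\div(A)=1$ from the standard decomposition and Remark~\ref{rem:computediv}. You merely spell out the standard inputs (period map surjectivity, Riemann--Roch plus Nakai--Moishezon for ampleness in rank one) that the paper leaves implicit.
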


\begin{proof}
  Pick a K$3$ surface $S$ such that $\Pic(S)\iso \bZ\cdot A_S$ for a line bundle $A_S$ with
  $(A_S)^2=2d>0$. Then either $A_S$ or $-A_S$ is ample, so we may assume that $A_S$ is ample. Let $A \in
  \Pic(\Hilb^2(S))$ be the associated line bundle to $A_S$. 
 Then by construction $A$ satisfies $q(A)= (A_S)^2 = 2d$, and $\div(A)=1=m$ by Remark \ref{rem:computediv}.
\end{proof}

\subsection{Case two: \texorpdfstring{$\div(A)=2$}{div(A)=2}}
\begin{lemma}\label{lem:exnice2}
  Pick $d\in \bZ$ such that for $m=2$ the moduli space $\cM_{d,m}$ is non-empty.
  Consider a K3 surface $S$ with $\Pic(S)=\bZ\cdot H_S$ for an ample line bundle with $(H_S)^2=2$. 
  Set $X\coloneqq \Hilb^2(S)$. Then there exists $A\in \Pic(X)$ such that 
  \begin{compactenum}
    \item $q(A)=2d$ and 
    \item $\div(A)=m=2$.
  \end{compactenum}
  In fact, with the notation of page \pageref{notation}, $A$ can be chosen as 
  $H+(2k-1)L$ for some $k\in \bZ_{>0}$.
\end{lemma}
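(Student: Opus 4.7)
The plan is to prove the lemma in two steps: a direct verification that $A = H+(2k-1)L$ has the claimed invariants for every $k \in \bZ_{>0}$, and a lattice-theoretic observation forcing the hypothesis $\cM_{d,2} \neq \emptyset$ to provide such a $k$.

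For the direct computation, I would rewrite $A = H+(2k-1)L = 2kH - (2k-1)\delta$ using $L = H - \delta$. The orthogonal decomposition of Proposition \ref{prop:standard-decomposition-K3n} together with $q(H) = (H_S)^2 = 2$ and $q(\delta) = -2$ immediately yields $q(A) = 2(2k)^2 - 2(2k-1)^2 = 8k-2 = 2(4k-1)$. For the divisibility I invoke Remark \ref{rem:computediv} with $n = 2$: since $H$ is associated to the primitive class $H_S \in H^2(S,\bZ)$, one finds
\[
\div(A) = \gcd(2k,\, 2(2k-1)) = 2 \gcd(k, 2k-1) = 2,
\]
where the last equality uses that consecutive integers are coprime.

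For the second step I need to show that $\cM_{d,2} \neq \emptyset$ forces $d = 4k-1$ for some $k \in \bZ_{>0}$. Positivity $d > 0$ is immediate, since an ample class lies in the positive cone and therefore has positive square. The congruence $d \equiv 3 \pmod{4}$ will follow from a lattice argument on $\LambdaKEtwo \iso \LambdaKE \oplus \bZ \cdot \delta$: any primitive $B \in \LambdaKEtwo$ with $\div(B) = 2$ decomposes as $B = n x_0 + b \delta$ for some primitive $x_0 \in \LambdaKE$ and integers $n,b$ with $\gcd(n,b) = 1$. Unimodularity of $\LambdaKE$ implies that every primitive element there has divisibility one, so $\div(B) = \gcd(n, 2b)$; combined with $\gcd(n,b) = 1$, the equation $\gcd(n, 2b) = 2$ forces $n$ to be even and $b$ to be odd. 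Since $\LambdaKE$ is an even lattice, $q(x_0) \in 2\bZ$, hence
\[
q(B) = n^2 q(x_0) - 2 b^2 \equiv -2 b^2 \equiv -2 \pmod{8},
\]
so $d = q(B)/2 \equiv 3 \pmod{4}$. One may then take $k = (d+1)/4 \geq 1$, and the class $A = H + (2k-1)L$ of the first step realizes the required invariants on $X$.

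The main (and only slightly delicate) obstacle is the divisibility computation inside the K3$^{[2]}$-lattice; everything else is a direct calculation from the standard decomposition of Proposition \ref{prop:standard-decomposition-K3n}. No ampleness of $A$ on the specific $X$ is claimed, which matches the statement of the lemma and is all that is needed for the deformation argument of Proposition \ref{prop:candeform} used later.
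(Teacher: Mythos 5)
Your proposal is correct and follows essentially the same route as the paper: decompose a primitive class of divisibility two in $\LambdaKEtwo$ as an even multiple of a primitive $\LambdaKE$-class plus an odd multiple of $\delta$, deduce $2d=2(4k-1)$ (the paper extracts $k$ explicitly from the decomposition where you phrase it as $q\equiv -2 \pmod 8$, which is the same computation), and then verify directly via Proposition \ref{prop:standard-decomposition-K3n} and Remark \ref{rem:computediv} that $A=H+(2k-1)L=2kH-(2k-1)\delta$ has $q(A)=2(4k-1)$ and $\div(A)=2$. Your explicit use of primitivity ($\gcd(n,b)=1$) to force $b$ odd is a point the paper leaves implicit, but the argument is the same.
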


\begin{proof}
  By assumption there exists a pair $(\Xtild, \Atild)\in \cM_{d,m}$. Consider the usual 
   lattice isomorphisms $H^2(\Xtild,\bZ) \iso \LambdaKEn \iso \LambdaKE\oplus \bZ\cdot \delta$ with
   $q(\delta)=-2$. 
   The line bundle $\Atild\in \Pic(\Xtild)\subseteq H^2(\Xtild,\bZ)$ can formally be decomposed into
  $\Atild=a\lambda +  b\delta$, where 
  $\lambda\in \LambdaKE$ is a primitive element and $a,b\in \bZ_{\geq 0}$. 
  Since by assumption $m=2$ and $(\Xtild, \Atild)\in \cM_{d,m}$, the line bundle $\Atild$ is primitive
  and satisfies $\div(\Atild)=2$.
 This implies that $a=2m$ is
  even (compare Remark \ref{rem:computediv}) and $b$ is odd, i.e.\,$b=2l+1$. Note that since
  $\LambdaKE$ is even, $q(\lambda)=2d_0$ for some integer $d_0$.
  This implies that 
  \begin{equation*}
    q(\Atild)=q(a\lambda+b\delta)= a^2 \cdot 2d_0 -b^2 \cdot 2 = 2\cdot (4m^2 d_0 -(2l+1)^2) 
    = 2\cdot (4\cdot (m^2d_0 - l^2 - l) -1).
  \end{equation*}

  Set $k\coloneqq m^2d_0 - l^2 - l$. Note that $q(\Atild)>0$ implies $k>0$.

  Consider the K3 surface $S$ as given in the statement of the lemma, and fix the notation from page \pageref{notation}.
  Then the primitive line bundle $A\coloneqq H + (2k-1) L=2k\cdot H - (2k-1) \delta$ satisfies:
  \begin{equation*}
    q(A)=q(2k\cdot H - (2k-1) \delta) 
    = (2k)^2 \cdot 2 + (2k -1)^2 \cdot (-2)
    = 2 \cdot (4k-1).
  \end{equation*}

  Therefore, $q(A)=q(\Atild)=2d$. On the other hand Remark \ref{rem:computediv} shows that
  $\div(A)=2$, as claimed.
\end{proof}

\subsection{Proof of Theorem \ref{thm:genbpf4}} \label{sec:proofofthm}
We finally combine the results we collected so far:
\begin{proof}[Proof of Theorem \ref{thm:genbpf4}]
  Fix $(d,m)$ such that $\cM_{d,m} \neq \emptyset$. First of all note, that 
it suffices to find an
  irreducible symplectic variety $X$ of K3$^{[n]}$-type with a primitive and base point free line bundle $A\in
  \cC_X\cap \Pic(X)$ which is not necessarily ample, such that $q(A)=2d$ and $\div(A)=m$: 
  Indeed, a 
  very general deformation $(Y,A_Y)$ of $(X,A)$ satisfies $\rho(Y)=1$. Since $q(A_Y)=q(A)>0$, the projectivity
  criterion (\cite[Theorem 3.11]{Huybrechts:HK:basic-results} and \cite[Theorem 2]{Huybrechts:HK:basic-results:erratum}) implies that $A_Y$ is ample (in fact $A_Y\in \cC_Y$ since $A\in
  \cC_X$ and the family preserves the line bundle; therefore $-A_Y$ is not ample).  Then for a generic deformation
  $(\Xtild,\Atild)$ of $(X,A)$ and $(Y,A_Y)$, the line bundle $\Atild$ will be base point free and ample
  (since both properties are open in families). Since furthermore $\Atild$ is primitive, $q(\Atild)=q(A)=2d$ and
  $\div(\Atild)=\div(A)=m$, 
  the pair $(\Xtild,\Atild)$ defines an element in $\cM_{d,m}$. 
  By Proposition  \ref{prop:candeform} the moduli space $\cM_{d,m}$ is connected. Therefore, the existence of
  $(\Xtild,\Atild)$ implies that for a generic element
  $(X_t,A_t)\in \cM_{d,m}$ the line bundle $A_t$ is base point free (again by  openness of base point
  freeness in families). 
  
  Recall that since $\cM_{d,m}$ parametrizes pairs with primitive line bundles, and the discriminant of
  $\LambdaKEtwo$ is two, the assumption that $\cM_{d,m}\neq \emptyset$ implies that the divisibility
  $m$ can only be one or two (compare Remark \ref{rem:div-discr}).
  
  Suppose $m=1$. Then Lemma \ref{lem:exnice1} gives a K3 surface $S$ with $\Pic(S)\iso \bZ\cdot
  A_S$ for an ample line bundle $A_S\in \Pic(S)$, and shows that the associated line bundle $A\in
  \Hilb^2(S)$ is primitive and
  satisfies that $q(A)=2d$ and $\div(A)=1$. Lemma \ref{lem:Hbpfinexample} shows that the line bundle $A$ is
  base point free. This completes the proof in the case $m=1$.

  Suppose $m=2$ and still assume $\cM_{d,m}\neq \emptyset$. Then Lemma \ref{lem:exnice2} shows that with the
  notation of page \pageref{notation} the primitive line bundle $A\coloneqq H+(2k-1)L$ satisfies
  $2\cdot(4k-1)=q(A)=2d>0$, and $\div(A)=2$. Therefore, the assumption $(d,m)\neq(3,2)$ implies that $k\geq 2$. 
  However for $k\geq 2$, the associated line bundle $A'=H'+(2k-1)L'\in \Pic(X')$ is nef by Lemma
  \ref{lem:cones}.\ref{it:cones-d}. Then $A'$ is base point free by Proposition \ref{prop:okinspeccase},
  which concludes the proof.
\end{proof}

\begin{remark}
  In this proof, we needed to assume that $(d,m)\neq (3,2)$, because we do not know any primitive base point free
  line bundle with these numerical properties so far. 
  
  Note that with the notation of page \pageref{notation}, $H+L \in \Pic(X)$ is a nef line
  bundle with these constants, but Proposition \ref{prop:mostlyokonX} does not imply base point freeness
  of $H+L$. 
  However, Proposition \ref{prop:mostlyokonX} shows that $2(H+L)$ is base point free. Therefore, 
  even in the case $(d,m)=(3,2)$ one can deduce that for a generic pair $(X,A)\in
  \cM_{3,2}$ the line bundle $2A$ is base point free. 

  This will also follow from Theorem \ref{thm:genbpf!}.
\end{remark}

\section{The geometry of a particular example}
\label{sec:geometryEX}
This section is devoted to a detailed study of the geometry of the central example in Section
\ref{sec:BPinexample}. We will use these results in Section \ref{sec:2dimBL} to determine the base locus of
the line bundle $H+L$ from page \pageref{notation}.

Keep the notation as on page \pageref{notation}. In particular $S$ is a K3 surface with $\Pic(S)=
\bZ\cdot H_S$, where $H_S$ is an ample line bundle with $(H_S)^2=2$. 
Such a K3 surface is endowed in a natural way with a map 
\begin{equation*}
  \pi\colon S\overset{2:1}{\longrightarrow} \bP^2, 
\end{equation*}
which is the covering associated to the complete linear system $|H_S|$. Remember that $H_S$ is base point
free by Corollary \ref{cor:bpfifno0-class}. It is well-known
that $\pi$ is a (2:1)-cover branched along a sextic curve $B\subseteq \bP^2$ (see e.g.~\cite[Remark II.2.4.1]{HuybrechtsK3}).

This induces an involution $\rho \colon S\to S$, which interchanges the two sheets of $\pi$.
Consider the graph 
\begin{equation*}
\Gamma_\rho \subseteq S\times S.
\end{equation*}
Denote the map, which interchanges the two factors of $S\times S$ by
\begin{equation*}
\tau\colon S\times S \to S\times S,   \quad (x,x') \mapsto (x',x).
\end{equation*}
Note that $\Gamma_\rho$ is $\tau$-invariant (since $\rho$ is an involution).

Let $\widehat{S\times S}$ be the blow-up of $S\times S$ along the diagonal $\Delta_S\subseteq S\times S$.
Then $\tau$ induces a map
  $\widehat{\tau}\colon \widehat{S\times S}\to \widehat{S\times S}$.
Recall that $X=\Hilb^2(S)$ is obtained from $\widehat{S\times S}$ by dividing
out $\widehat{\tau}$.

The strict transform $S\iso \widehat{\Gamma_\rho}\subseteq \widehat{S\times S}$ of $\Gamma_\rho$ is
again $\widehat{\tau}$-invariant, and the action of $\widehat{\tau}|_{\widehat{\Gamma_\rho}}$
corresponds to the usual action of $\rho$ on $S$.
Consequently the image $P$ of $\widehat{\Gamma_\rho}$ in the Hilbert scheme $\Hilb^2(S)$ is isomorphic
to $\bP^2$.

\begin{equation*}
  \begin{tikzcd}[row sep = small]
    S\ar{d}{\iso} &S \ar{d}{\iso}\ar{l}[swap]{id} \ar{r}{\pi}[swap]{(2:1)}& \bP^2 \ar{d}{\iso} &&\\
    \Gamma_\rho \ar[hook]{dd}[swap]{\iota_\Gamma}& \widehat{\Gamma_\rho} \ar[hook]{dd}{\iota_{\hat{\Gamma}}} \ar{l}[swap]{\iso} \ar{r}&P \ar[hook]{dd}{\iota_P}
       &\Xhat  \ar[end anchor=north east]{ldd}[swap]{\phi} \ar{rdd}{\phi'}& \\
    &&&&\\
    S\times S \ar{dd}[swap]{\pi\times \pi}& \widehat{S\times S} \ar[dashed]{dd}[swap]{\pipihat}\ar{l}[swap]{\zeta_S}\ar{r}{\varepsilon_S}& \Hilb^2(S)=X  \ar[dashed]{dd}{\piz}\ar[dashed]{rr}{f}&& X'\ar{dd}{\psi}\\
    &&&&\\
    \bP^2 \times \bP^2 
    & \widehat{\bP^2 \times \bP^2 } \ar{l}[swap]{\zeta_{\bP^2}}\ar{r}{\varepsilon_{\bP^2}}
    & \Hilb^2(\bP^2) \ar{r}{a}& \bP^2\dual \ar{r}{\iso} & \bP(H^0(X',L')\dual) 
  \end{tikzcd}
\end{equation*}

On the other hand let $\widehat{\bP^2 \times \bP^2 }$ be the blow-up of $\bP^2 \times \bP^2$ along the
diagonal $\Delta_{\bP^2}$. Let $\tau_{\bP^2}$ be the map interchanging the factors of $\bP^2 \times
\bP^2$, and $\widehat{\tau_{\bP^2}}$ be the induced map on $\widehat{\bP^2 \times \bP^2 }$. Then again
$\Hilb^2(\bP^2)$ is obtained from $\widehat{\bP^2 \times \bP^2 }$ by dividing out
$\widehat{\tau_{\bP^2}}$.

There is a natural map $a\colon \Hilb^2(\bP^2) \to \bP^2\dual$, that associates to a subscheme the line
which it spans. 

Recall that from Lemma \ref{lem:cones} we know that there exists a unique other birational model of
$f\colon X\dashrightarrow X'$
which is an irreducible symplectic variety. The associated line bundle $L'=H'-\delta'$ is known to induce
a Lagrangian fibration  $\psi\colon X'\to \bP(H^0(X',L')\dual)$ by Theorem \ref{thm:Matsushita}. 

We will now show that $f$ is exactly the Mukai flop in $P\iso \bP^2 \subseteq X$ and that there exists a
natural identification $\bP^2\dual \iso \bP(H^0(X',L')\dual)$ such that $a\circ \piz$
and $\psi \circ f$ coincide outside of $P$.

\begin{lemma}
  The indeterminacy locus of $\piz$ is exactly $P$.
\end{lemma}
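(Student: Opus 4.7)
The plan is to use the moduli description of $\piz$: a point $Z\in \Hilb^2(S)$ lies in the domain of $\piz$ exactly when $\pi_*\dO_Z$, as an $\dO_{\bP^2}$-module, is cyclic of length two, and then $\piz(Z)=\pi_*Z$. Checking this condition case by case will give both inclusions.

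First I will show $P\subseteq \text{Indet}(\piz)$. Since $P\iso \bP^2$ is irreducible and the indeterminacy locus is closed, it is enough to rule out a dense open subset. A general $Z\in P$ has the form $Z=\{x,\rho(x)\}$ with $x$ outside the ramification divisor $R=\pi^{-1}(B)$; then $\pi(x)=\pi(\rho(x))$, so $\pi_*\dO_Z\iso \bC\oplus\bC$ is supported at one point of $\bP^2$ and killed by the maximal ideal there, hence not cyclic. So $\piz$ is not defined on this dense subset of $P$.

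For the reverse inclusion, I will fix $Z\in X\smallsetminus P$ and check regularity by cases. If $Z=\{x,y\}$ is reduced, the assumption $Z\notin P$ gives $y\neq \rho(x)$, hence $\pi(x)\neq \pi(y)$ and $\pi_*\dO_Z$ is a reduced length-two subscheme of $\bP^2$. If $Z$ is non-reduced at $x$ with tangent direction $[v]\in \bP(T_xS)$, a local computation in coordinates shows that $\pi_*\dO_Z$ is cyclic of length two if and only if $d\pi_x(v)\neq 0$. The only failure mode is therefore $x\in R$ together with $v$ spanning $\ker d\pi_x$, and the remaining task is to argue that such a $Z$ must already lie in $P$. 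For this I will identify $P\cap \varepsilon_S(E_S)$ with $\varepsilon_S(\widehat{\Gamma_\rho}\cap E_S)$: at a fixed point $(x,x)\in \Gamma_\rho\cap \Delta_S$ (so $x\in R$) one has $T_{(x,x)}\Gamma_\rho=\{(u,d\rho_xu)\mid u\in T_xS\}$, and its projection to $T_{(x,x)}(S\times S)/T_{(x,x)}\Delta_S\iso T_xS$ is $(d\rho_x-\id)(T_xS)$, which is the $(-1)$-eigenspace of $d\rho_x$. Since $\pi\circ \rho=\pi$ implies $\ker d\pi_x=T_xS^{-}$, this eigenspace coincides with $\ker d\pi_x$. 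Hence the ``bad'' non-reduced subscheme $(x,[\ker d\pi_x])$ already belongs to $P$, and $\piz$ is regular on all of $X\smallsetminus P$.

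The most delicate step will be the final local calculation at the ramification divisor, where one must match the intersection $\widehat{\Gamma_\rho}\cap E_S$ with exactly those tangent directions killed by $d\pi$; once this identification is in place, the rest is direct bookkeeping with pushforwards along $\pi$.
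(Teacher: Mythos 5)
Your argument that $\piz$ is regular on $X\smallsetminus P$ is fine, and the eigenspace computation identifying $\widehat{\Gamma_\rho}\cap E_S$ with the directions $[\ker d\pi_x]$, $x\in R$, is correct (this is exactly what makes the non-reduced "bad" subschemes lie in $P$). The gap is in the other inclusion, $P\subseteq \mathrm{Indet}(\piz)$, and it is already built into your opening criterion: only one direction of "$Z$ lies in the domain of $\piz$ exactly when $\pi_*\dO_Z$ is cyclic of length two'' is automatic. If $\pi|_Z$ is a closed immersion, the image family gives a morphism to $\Hilb^2(\bP^2)$, so such $Z$ are in the domain; but the converse is not free. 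At a point $Z=\{x,\rho(x)\}$ where the naive assignment produces only a length-one scheme, the rational map could a priori still extend to a morphism near $Z$ taking some \emph{other} value there. So "not cyclic, hence $\piz$ is not defined at $Z$'' does not follow; you have shown that the formula $Z\mapsto \pi(Z)$ breaks down on $P$, not that no extension of the rational map exists, which is what "indeterminacy locus'' means.

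To close the gap, argue non-extendability at a general $Z=\{x,\rho(x)\}$ with $x\notin R$: since $x\neq\rho(x)$ and both points avoid $R$, a neighbourhood of $Z$ in $X$ is identified with a neighbourhood of $(x,\rho(x))$ in $S\times S$, on which $\pipi$ is a local isomorphism; hence near $Z$ the map $\piz$ is, up to this local isomorphism, the natural rational map $\PxP\dashrightarrow \Hilb^2(\bP^2)$, which is undefined at every point of the diagonal $\Delta_{\Pz}$ (its inverse direction is the birational morphism induced by Hilbert--Chow, whose fibres over the diagonal are positive-dimensional $\bP^1$'s of tangent directions; equivalently, limits of $\piz(Z_t)$ along different arcs approaching $Z$ give non-reduced schemes at $\pi(x)$ with different tangent directions, so no continuous extension exists). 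Closedness of the indeterminacy locus then yields all of $P$. Note this is essentially the route the paper takes, working upstairs: $\widehat{\Gamma_\rho}$ is mapped by $(\pipi)\circ\zeta_S$ into the centre $\Delta_{\Pz}$ of the blow-up $\PPhat$, so $\pipihat$ is undefined along $\widehat{\Gamma_\rho}$ and defined elsewhere, and passing to the $\widehat{\tau}$-quotients gives that the indeterminacy locus of $\piz$ is $\eps_S(\widehat{\Gamma_\rho})=P$.
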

\begin{proof}
  The map $\pipihat$ has indeterminacy locus $\widehat{\Gamma_\rho}$ due to the fact that
  $\widehat{\Gamma_\rho}$ is mapped to the diagonal $\Delta_{\Pz}$ via $(\pipi)\circ \zeta_S$, which is
  blown up in $\PPhat$. Since $\piz$ is the induced map after dividing out by the  actions
  $\widehat{\tau_{\bP^2}}$ and $\widehat{\tau_{S}}$, which correspond to interchanging the two factors of
  the products on both sides, the indeterminacy locus of $\piz$ is exactly
  $\eps_S(\widehat{\Gamma_\rho}) =P$.
\end{proof}

With the notation of page \pageref{notation} (in particular $H\in \Pic(X)$ is the associated line bundle
to $H_S\in\Pic(S)$), we get:
\begin{lemma}\label{lem:Lispullback}
  The pullback $(a\circ \piz)^*(\dO(1))$ is exactly $L=H-\delta \in \Pic(X)$.
\end{lemma}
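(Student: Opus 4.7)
The plan is to factor the pullback as $(a\circ \piz)^*\dO(1) = \piz^*(a^*\dO(1))$ and compute each factor separately.

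For the first factor, I claim $a^*\dO(1) = H_{\Pz} - \delta_{\Pz}$ in $\Pic(\Hilb^2(\Pz))$, where $H_{\Pz}$ is the class associated to $\dO_{\Pz}(1)$ and $2\delta_{\Pz}$ is the Hilbert--Chow divisor on $\Hilb^2(\Pz)$. Let $E_{\Pz}$ denote the exceptional divisor of $\zeta_{\Pz}\colon \PPhat \to \PxP$. On $\PPhat$ the composition $a\circ \eps_{\Pz}$ is the morphism associated to the three sections of $\dO(1,1)$ on $\PxP$ given by the cross product $(p,q)\mapsto p\times q$, which vanish transversally along $\Delta_{\Pz}$; hence $(a\circ \eps_{\Pz})^*\dO(1) = \zeta_{\Pz}^*\dO(1,1) - E_{\Pz}$. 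Combining this with the analogues of \eqref{eq:epsalpha} and \eqref{eq:epsdelta} for $\Hilb^2(\Pz)$, namely $\eps_{\Pz}^*H_{\Pz} = \zeta_{\Pz}^*\dO(1,1)$ and $\eps_{\Pz}^*\delta_{\Pz} = E_{\Pz}$, we deduce $\eps_{\Pz}^*(a^*\dO(1)) = \eps_{\Pz}^*(H_{\Pz} - \delta_{\Pz})$, and injectivity of $\eps_{\Pz}^*$ on Picard groups yields the claim.

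Next, I would verify that $\piz^*H_{\Pz} = H$ and $\piz^*\delta_{\Pz} = \delta$ in $\Pic(X)$. This makes sense despite $\piz$ being only rational, because the indeterminacy locus $P$ has codimension two in the smooth variety $X$, so $\Pic(X) \iso \Pic(X\setminus P)$. For the first identity, both $\eps_S^*(\piz^* H_{\Pz})$ and $\eps_S^*(H)$ equal $\zeta_S^*(\pr_1^*H_S + \pr_2^*H_S)$: use the commutative square $\piz\circ \eps_S = \eps_{\Pz}\circ \pipihat$ together with $(\pipi)^*\dO(1,1) = \pr_1^*H_S + \pr_2^*H_S$, and then injectivity of $\eps_S^*$ gives equality. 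For the second identity, note that at a generic point of the Hilbert--Chow divisor of $X$ -- i.e.\ a non-reduced subscheme supported at some $x\notin R$, where $R$ is the ramification divisor of $\pi$ -- the cover $\pi$ is étale, hence $\piz$ is a local isomorphism there and pulls back $2\delta_{\Pz}$ to $2\delta$ with multiplicity one. No other divisorial components can occur, since the remaining locus where $\piz$ sends a subscheme to one of shorter length is contained in the codimension-two set $P$.

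Combining the two identifications gives $(a\circ\piz)^*\dO(1) = \piz^*(H_{\Pz} - \delta_{\Pz}) = H - \delta = L$, as desired. The most delicate step is the identification $\piz^*\delta_{\Pz} = \delta$: one must handle the rationality of $\piz$ and perform a local analysis near the Hilbert--Chow divisor to rule out a ramification correction. The essential point is that $R$ has codimension one in $S$, so a generic non-reduced subscheme in $X$ has support away from $R$ and sits in the étale locus of $\pi$.
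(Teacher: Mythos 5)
Your argument is correct, and its key computation coincides with the paper's: both reduce to the identity $(a\circ \eps_{\Pz})^*\dO(1)=\zeta_{\Pz}^*\dOii-E_{\Pz}$ on $\PPhat$, coming from the fact that the three cross-product sections cut out the reduced diagonal. The difference is organizational. The paper never leaves the blow-ups: it pulls everything back to $\SShat$ via the injective map $\eps_S^*$ and compares $(a\circ\piz\circ\eps_S)^*\dO(1)$ directly with $\eps_S^*(L)=\zeta_S^*(H_S\boxtimes H_S)-E_S$, so the exceptional-divisor bookkeeping is absorbed into one identity and no knowledge of $\Pic(\Hilb^2(\Pz))$ is needed. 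You instead factor through $\Hilb^2(\Pz)$, proving $a^*\dO(1)=H_{\Pz}-\delta_{\Pz}$ and then $\piz^*H_{\Pz}=H$, $\piz^*\delta_{\Pz}=\delta$; this requires the standard (Fogarty) description of $\Pic(\Hilb^2(\Pz))\iso\bZ H_{\Pz}\oplus\bZ\delta_{\Pz}$ with $2\delta_{\Pz}$ the Hilbert--Chow divisor and $\eps_{\Pz}^*\delta_{\Pz}=E_{\Pz}$, facts the paper only records for K3 surfaces but which hold for any smooth surface. Your treatment of the $\delta$-part is genuinely different and geometrically appealing: $\piz$ is a local isomorphism at a generic non-reduced subscheme supported off the ramification curve, so the Hilbert--Chow divisor pulls back with multiplicity one, and torsion-freeness of $\Pic(X)$ then gives $\piz^*\delta_{\Pz}=\delta$ (the paper's analogue would be $\pipihat^*E_{\Pz}=E_S$ on $\SShat$). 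Two small points to tighten: the phrase ``sends a subscheme to one of shorter length'' should be replaced by the precise statement that a reduced $Z=\{x,y\}\notin P$ has $\pi(x)\neq\pi(y)$, so away from the Hilbert--Chow divisor only the codimension-two locus $\{x,\rho(x)\}\subseteq P$ could map into the Hilbert--Chow divisor of $\Hilb^2(\Pz)$; and you should state explicitly that the conclusion $\piz^*\delta_{\Pz}=\delta$ follows from $\piz^*(2\delta_{\Pz})=2\delta$ because $\Pic(X)$ has no torsion. Your route buys a reusable intermediate statement ($a^*\dO(1)=H_{\Pz}-\delta_{\Pz}$) at the cost of extra standard inputs, while the paper's single comparison on $\SShat$ is more self-contained.
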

\begin{proof}
Since $ \eps_S^*$ is injective, one can  compare $(a\circ \piz\circ
\eps_S)^*(\dO(1))$ and $\eps_S^*(L)$ instead. Note that $\eps_S^*(L)=\eps_S^*(H)-\eps_S^*(\delta) =
\zeta_S^*(H_S\boxtimes H_S)- E_S$, where $E_S$ is the exceptional divisor of the blow-up $\SShat$ (use
\eqref{eq:epsalpha} and \eqref{eq:epsdelta}).

One can check that $(a\circ\eps_{\bP^2})^*(\dO(1))=\zeta_{\bP^2}^*\dO(1,1)-E_{\bP^2}$, where $E_{\bP^2}$ is the exceptional
divisor in the blow-up $\PPhat$ (indeed, since $a \circ \eps_{\bP^2}$ is a flat map, it is sufficient to
check that the preimage of the line
$\{x_0\dual=0\}\subseteq \Pdual$ is exactly the strict transform of $\{x_1\otimes x_2- x_2\otimes
x_1=0\}\subseteq \PxP$, and this is a zero set of the line bundle $\zeta_{\bP^2}^*\dO(1,1)-E_{\bP^2}$).

This implies that $(a\circ\eps_{\bP^2}\circ \pipihat)^*(\dO(1))=\zeta_{S}^*(H_S\boxtimes H_S)-E_{S}$, since
$\zeta_{\bP^2}\circ\pipihat$ is well-defined outside $\widehat{\Gamma_\rho}$ and coincides with
$\zeta_S\circ (\pipi)$ on its domain of definition. Furthermore, note
that $\eps_{\bP^2} \circ \pipihat$ and $\piz\circ\eps_S$ are both well-defined outside
$\widehat{\Gamma_\rho}$ and coincide on $\SShat\setminus \widehat{\Gamma_\rho}$, which shows that 
\begin{equation*}
  (a\circ\piz\circ\eps_S)^*(\dO(1))=(a\circ\eps_{\bP^2}\circ \pipihat)^*(\dO(1))
  =\zeta_{S}^*(H\boxtimes H)-E_{S} = \eps_S^*(L).
\end{equation*}
This completes the proof.
\end{proof}

\begin{remark}\label{rem:natidentification}
  In particular this implies that there is a natural map 
  \begin{equation*}
    H^0(\bP^2\dual,\dO(1)) \iso H^0(\Hilb^2(\bP^2), a^*\dO(1)) \inj H^0(X,L) \iso H^0(X',L').
  \end{equation*}
Since it is known that $h^0(X',L')=3$ (compare Theorem \ref{thm:Matsushita}), this is in fact an
isomorphism. 
Therefore, it gives rise to a natural identification $\bP^2\dual \iso \bP(H^0(X',L')\dual)$.
\end{remark}

In the following we will show:
\begin{proposition}\label{prop:Cextremalray}
  A curve $D\subseteq X$ lies in $P$ if and only if $\deg(L|_D)< 0$.
  
  In particular this implies that $X'$ is the Mukai flop of $X$ in $P$.
\end{proposition}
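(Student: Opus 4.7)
The plan is to establish the equivalence in two directions and then immediately read off the Mukai flop statement from Wierzba's theorem. The ``only if'' direction will follow from a direct computation of $L|_P$, while the ``if'' direction combines the intersection--theoretic comparison in Corollary \ref{cor:compareLB} with the explicit rational morphism $a\circ \piz$.

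First I would pull $L=H-\delta$ back to $\SShat$ via $\eps_S$ and restrict to the strict transform $\widehat{\Gamma_\rho}\iso S$. Using \eqref{eq:epsalpha} and \eqref{eq:epsdelta}, $\eps_S^*(L)=\zeta_S^*(H_S\boxtimes H_S)-E_S$. Under the identification $\widehat{\Gamma_\rho}\iso S$ the first summand becomes $H_S+\rho^*H_S=2H_S$ (since $\pi\circ\rho=\pi$ gives $\rho^*H_S=H_S$), while a local computation in $\rho$-adapted coordinates $(u,v)$ near a fixed point of $\rho$ shows that $\widehat{\Gamma_\rho}$ meets $E_S$ transversely along the ramification curve $R\subseteq S$ of $\pi$. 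Riemann--Hurwitz for the double cover $\pi$ branched along a sextic then gives $R\sim 3H_S$, so $\eps_S^*(L)|_{\widehat{\Gamma_\rho}}=-H_S$. Since $\eps_S|_{\widehat{\Gamma_\rho}}=\pi$ and $\pi^*$ is injective on Picard, this forces $L|_P=\dO_{\bP^2}(-1)$. The ``only if'' direction is then immediate: any effective curve $D\subseteq P\iso \bP^2$ satisfies $\deg L|_D=-\deg_{\bP^2}(D)<0$.

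For the converse, I would invoke Theorem \ref{thm:Wierzba} to write $f$ as a Mukai flop in some disjoint union $\bigsqcup_{i=1}^{r}P_i$ of $\bP^2$'s and exploit Corollary \ref{cor:compareLB}. Computing $(L,W)_q=-2$ yields $\phi^*(L)=\phi'^*(L')+2\sum_i m_i E_i$; since $L'$ is nef on $X'$ by Theorem \ref{thm:Matsushita}, intersecting with the strict transform $\widetilde{D}$ of any curve $D\not\subseteq \bigsqcup_i P_i$ gives $\deg L|_D\geq 0$. Thus $\deg L|_D<0$ forces $D\subseteq P_{i_0}$ for some $i_0$. The ``only if'' direction applied to curves in $P$ shows $P\subseteq \bigsqcup_i P_i$, and irreducibility forces $P=P_{i_0}$ after reindexing. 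To rule out additional components $P_{i_1}$ disjoint from $P$, I would observe that such a $P_{i_1}$ lies in $X\smallsetminus P$, where Lemma \ref{lem:Lispullback} makes $L|_{P_{i_1}}=(a\circ\piz|_{P_{i_1}})^*\dO(1)$ globally generated; this would force $\deg L|_C\geq 0$ for every curve $C\subseteq P_{i_1}$, contradicting $\deg L|_C=m_{i_1}(L,W)_q=-2m_{i_1}<0$ obtained from Theorem \ref{thm:Wierzba}\ref{it:ray} and Lemma \ref{lem:degphiA}. Consequently $\bigsqcup_i P_i=P$, which completes the equivalence and simultaneously identifies $X'$ as the Mukai flop of $X$ along $P$.

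The main obstacle I anticipate is the local computation underlying $E_S|_{\widehat{\Gamma_\rho}}=R$: in $\rho$-adapted coordinates one must verify that after the blow-up along the diagonal the excess intersection of $\Gamma_\rho$ with $\Delta_S$ along the ramification locus resolves to a transverse crossing with multiplicity one, otherwise the restriction formula would produce the wrong multiple of $H_S$ and the final identity $L|_P=\dO_{\bP^2}(-1)$ would fail. By comparison, the step ruling out additional flop components is formal once Lemma \ref{lem:Lispullback} and Lemma \ref{lem:degphiA} are in place.
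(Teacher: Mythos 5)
Your proposal is correct, but it runs along a partly different track than the paper. For the direction ``$D\subseteq P\Rightarrow \deg(L|_D)<0$'' the paper does not compute $L|_P$ directly: it computes $\deg(H|_C)=2$ for a line $C\subseteq P$ by pushing forward through $\Gamma_\rho$ (Lemma \ref{lem:C.H}), feeds this into the proportionality $\deg(\_|_C)=m\,(\_,W)_q$ of Lemma \ref{lem:degphiA} to get $m=\half$ (Lemma \ref{lem:miishalf}), and only then concludes $\deg(L|_C)=-1$ (Corollary \ref{cor:DisLnegative}); you instead restrict $\eps_S^*(L)=\zeta_S^*(H_S\boxtimes H_S)-E_S$ to $\widehat{\Gamma_\rho}\iso S$ and use $E_S|_{\widehat{\Gamma_\rho}}=R\sim 3H_S$ (your transversality check along the ramification curve is indeed the point to verify, and it works: $\Gamma_\rho\cap\Delta_S$ is the reduced curve $R$, a Cartier divisor in $\Gamma_\rho$) to get $L|_P\iso\dO_{\bP^2}(-1)$ outright. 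Your computation is more self-contained and has the advantage of not tacitly treating $P$ as a flop component before that is known, whereas the paper's route has the side benefit of producing $m=\half$, which is reused later (Remark \ref{rem:halfisproblem}, Proposition \ref{prop:structureofBH+L}, the proof of Proposition \ref{prop:H+Lgenbpf}). For the converse and the flop statement the roles are reversed: the paper argues that any irreducible $D\not\subseteq P$ has $\deg(L|_D)\geq 0$ directly from $L=\piz{}^*a^*\dO(1)$ (since $P$ is the indeterminacy locus), and then gets the Mukai flop in one stroke by observing that $[C]$ spans an $(X,\nu L)$-negative extremal ray of a klt pair whose locus is $P$ and quoting Wierzba's Proposition 2.1; you instead apply Theorem \ref{thm:Wierzba} to $f$ (legitimate, since Lemma \ref{lem:cones} provides the single-wall hypothesis and the paper itself already invokes it in Section \ref{sec:BPinexample}), use Corollary \ref{cor:compareLB} and nefness of $L'$ on strict transforms for the converse, and then must do the extra bookkeeping of identifying $\bigsqcup_i P_i$ with $P$ and excluding further components via Lemma \ref{lem:Lispullback} and Lemma \ref{lem:degphiA} -- a step the paper's contraction-theoretic argument avoids. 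Both routes are sound; yours trades the MMP input for a slightly longer but more explicit intersection-theoretic elimination.
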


We split  the proof into several lemmas.
\begin{lemma}
  Let $D\subseteq X$ be an irreducible curve which is not contained in $P$. 
Then $\deg(L|_D)\geq 0$.
\end{lemma}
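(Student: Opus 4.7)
The plan is to exploit the identification $L=(a\circ\piz)^{*}\dO(1)$ from Lemma~\ref{lem:Lispullback} to produce enough global sections of $L$, and then apply the standard effectivity argument to $D$.

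First, I would set $\alpha\coloneqq a\circ\piz\colon X\dashrightarrow \Pdual$. Its indeterminacy locus is exactly $P\iso\bP^2$, which has codimension two in the smooth variety $X$. For every $t\in H^0(\Pdual,\dO(1))$ the pullback $\alpha^{*}(t)$ is a well-defined section of $L|_{X\setminus P}$, and by Hartogs' theorem it extends uniquely to a section $s_t\in H^0(X,L)$. This yields a three-dimensional subspace of $H^0(X,L)$ (which in fact equals all of $H^0(X,L)$ by Remark~\ref{rem:natidentification}, but only the inclusion is needed). The next step is to observe that the base locus of $L$ lies in $P$: for any $x\in X\setminus P$ the point $\alpha(x)\in\Pdual$ is defined, and since $\dO(1)$ is globally generated on $\Pdual$ one can pick $t$ with $t(\alpha(x))\neq 0$; the extended section $s_t$ then satisfies $s_t(x)\neq 0$, so $x$ is not a base point of $L$.

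To conclude, for an irreducible curve $D\subseteq X$ with $D\not\subseteq P$, the previous step produces a section $s\in H^0(X,L)$ whose restriction to $D$ does not vanish identically. Pulled back to the normalization of $D$, the divisor $\{s=0\}|_D$ is then effective of degree $\deg(L|_D)$, which forces $\deg(L|_D)\geq 0$. The only non-formal step in this plan is the extension of $\alpha^{*}(t)$ across $P$, for which the fact $\codim_X P=2$ is essential; everything else follows directly from Lemma~\ref{lem:Lispullback}.
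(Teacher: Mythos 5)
Your proof is correct, and it rests on the same key input as the paper, namely Lemma~\ref{lem:Lispullback}. The paper's own proof is a one-liner: since $D\not\subseteq P$ is not contained in the indeterminacy locus of $\piz$, the identity $L=(a\circ\piz)^*\dO(1)$ together with nefness of $a^*\dO(1)$ gives $\deg(L|_D)\geq 0$. You instead make the positivity explicit through sections: pulling back the sections of $\dO(1)$ on $\Pdual$ over $X\setminus P$ and extending them across the codimension-two locus $P$ (Hartogs, $X$ smooth), you show the base locus of $L$ is contained in $P$, and then conclude by effectivity of the zero divisor of a section not vanishing identically on $D$. This is a legitimate and slightly more careful variant, since it makes rigorous the point the paper leaves implicit -- how to treat curves that meet $P$ without being contained in it -- at the cost of a few extra lines; note also that it foreshadows the containment of the base locus of $L$ in $P$, which the paper only records later (Corollary~\ref{cor:mapscoincideoutsideP}), without any circularity since only Lemma~\ref{lem:Lispullback} is used.
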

\begin{proof}
  Since $D$ is not contained in the indeterminacy locus of $\pi^{[2]}$ (which is $P$), this follows from the
  fact that $L=\pi^{[2]*}\circ a^* \dO(1)$, since $a^*\dO(1)$ is nef.
\end{proof}

\begin{lemma}\label{lem:C.H}
  Let $C\subseteq P\iso \bP^2$ be a line. Then $\deg(H|_C)=2$.
\end{lemma}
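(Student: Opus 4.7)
My plan is to reduce the computation of $\deg(H|_C)$ to an intersection number on the K3 surface $S$, by lifting $C$ to a curve on $S$ via the double cover $\pi\colon S\to\bP^2$.

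First I would describe $C$ explicitly. The isomorphism $P\iso\bP^2$ used throughout sends $\{x,\rho(x)\}\in P$ to $\pi(x)\in\bP^2$, so a line $C\subseteq P$ corresponds to a line $\ell\subseteq\bP^2$. Let $\Sigma\coloneqq\pi^{-1}(\ell)\subseteq S$; this is a (generically smooth) curve in the linear system $|H_S|$, and $\pi|_\Sigma\colon\Sigma\to\ell$ is a double cover whose deck transformation is $\rho|_\Sigma$. It follows that the morphism $\nu\colon\Sigma\to X$ given by $x\mapsto\{x,\rho(x)\}$ has image $C$ and is generically $2{:}1$, since $x$ and $\rho(x)$ correspond to the same unordered pair. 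Consequently $2\deg(H|_C)=\deg(\nu^*H)$, and it suffices to compute the right-hand side.

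Next I would compute $\deg(\nu^*H)$. For generic $\ell$, $\nu$ factors as $\nu=\eps_S\circ\tilde\nu$ for a morphism $\tilde\nu\colon\Sigma\to\SShat$ landing in the strict transform $\widehat{\Gamma_\rho}$; the only point to check is that $\Sigma$ is not tangent to the ramification locus $R\subseteq S$ of $\rho$ at its intersection points with $R$, which holds for generic $\ell$ because $R$ maps isomorphically to the branch sextic $B\subseteq\bP^2$ and a generic line is not tangent to $B$. Equation \eqref{eq:epsalpha} then gives $\nu^*H=\tilde\nu^*\zeta_S^*(\pr_1^*H_S+\pr_2^*H_S)$, and under the identification $\widehat{\Gamma_\rho}\iso\Gamma_\rho\iso S$ (via first projection) this restricts to $H_S|_\Sigma+\rho^*(H_S|_\Sigma)$. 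Since $H_S=\pi^*\dO_{\bP^2}(1)$ is $\rho$-invariant, the latter equals $2H_S|_\Sigma$, of degree $2\cdot(H_S)^2=4$.

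Combining the two steps, $2\deg(H|_C)=\deg(\nu^*H)=4$, whence $\deg(H|_C)=2$. Since $\Pic(P)\iso\bZ$, this value is independent of the choice of line, so it is enough to carry out the argument for one generic $\ell$. The main subtlety is tracking the lift $\tilde\nu$ through the strict transform at the fixed points of $\rho|_\Sigma$; by restricting to a generic line one bypasses this issue entirely, and in any event a small blow-up of $\Sigma$ would not change the degree of a pulled back line bundle.
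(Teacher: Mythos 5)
Your proof is correct and essentially coincides with the paper's: both reduce $\deg(H|_C)$ to the intersection $\tfrac12\big((H_S.H_S)+(H_S.\rho^*H_S)\big)=2$ on $S$, using the $2{:}1$ correspondence between $S\iso\widehat{\Gamma_\rho}$ and $P$ together with the identity $\eps_S^*H=\zeta_S^*(H_S\boxtimes H_S)$; the paper carries this out with the projection formula on cycle classes where you use the explicit lift $\Sigma=\pi^{-1}(\ell)$ of the line. Your genericity hypothesis on $\ell$ is harmless but superfluous, since the morphism $\Sigma\to X$ is already obtained by restricting $S\iso\widehat{\Gamma_\rho}\hookrightarrow\SShat\xrightarrow{\ \eps_S\ }X$ to $\Sigma$, so no tangency condition at the ramification locus is needed.
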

\begin{proof}
  The degree of $H$ on $C$ is:
\begin{align*}
  \deg&(H|_C)=\deg({\iota_P}_*\dO(1).H)
  =\deg({\iota_P}_*\half\pi_*\pi^*\dO(1).H)
  =\half\deg({\varepsilon_S}_*{\iota_{\hat{\Gamma}}}_*\pi^*\dO(1).H)\\
  &=\half\deg({\iota_{\hat{\Gamma}}}_*\pi^*\dO(1).\,{\varepsilon_S}^*H)
  \overset{(*)}{=}\half\deg({\iota_{\hat{\Gamma}}}_*H_S.\,{\zeta_S}^*(H_S\boxtimes H_S))\\
  &=\half\deg({\zeta_S}_*{\iota_{\hat{\Gamma}}}_*H_S.\,H_S\boxtimes H_S)
  =\half\deg({\iota_{\Gamma}}_*H_S.\,H_S\boxtimes H_S)
\\
  &=\half\Big(\deg(H_S. H_S) + \deg(H_S. \rho^*(H_S))\Big)
  =\half\cdot 2 \deg(H_S.H_S)= 2,
\end{align*}
where $(*)$ follows from \eqref{eq:epsalpha}.
This is what we claimed.
\end{proof}

We can deduce the followig:
\begin{lemma}\label{lem:miishalf}
  The constant $m\coloneqq m_i$ associated to $P \subseteq X$ (as introduced in Lemma \ref{lem:degphiA}) is 
  $m=\half$. 
  In particular for a line $C\subseteq P\iso \bP^2$ and for all line bundles $A\in \Pic(X)$ one has $\deg(A|_C)= \half (A,W)_q$.
\end{lemma}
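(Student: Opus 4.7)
The plan is very direct: Lemma \ref{lem:degphiA} already produced a rational constant $m_i \in \frac{1}{2}\mathbb{Z}_{>0}$ with $\deg(\_|_{C_i}) = m_i \cdot (\_,W)_q$ on all of $H^2(X,\mathbb{Z})$, and it remains only to pin down the value by testing the identity on one suitable line bundle. I would choose $A = H$, since both sides are easy to evaluate.

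First I would compute the right-hand side: using the orthogonal decomposition $\Pic(X) \cong \bZ \cdot H \oplus \bZ \cdot \delta$ with $q(H) = (H_S)^2 = 2$, $q(\delta) = -2$ and $(H,\delta)_q = 0$, one gets
\begin{equation*}
(H,W)_q = (H, 2H - 3\delta)_q = 2\, q(H) - 3\,(H,\delta)_q = 4.
\end{equation*}
For the left-hand side, Lemma \ref{lem:C.H} gives $\deg(H|_C) = 2$ for a line $C \subseteq P \cong \mathbb{P}^2$. Combining, $2 = m \cdot 4$, so $m = \tfrac{1}{2}$.

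There is essentially no obstacle here, since the heavy lifting was done in Lemmas \ref{lem:degphiA} and \ref{lem:C.H}; the only thing to keep in mind is that the relation $\deg(\_|_C) = m (\_,W)_q$ was proved on all of $H^2(X,\bZ)$ (not just $\Pic(X)$), so testing on the single class $H$ determines $m$ unambiguously. Once $m = \tfrac{1}{2}$ is established, the second claim $\deg(A|_C) = \tfrac{1}{2}(A,W)_q$ for arbitrary $A \in \Pic(X)$ is then just the specialization of the formula from Lemma \ref{lem:degphiA} to our $P$, and requires no further argument.
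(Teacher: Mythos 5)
Your proposal is correct and follows essentially the same route as the paper: both evaluate the relation $\deg(\_|_C)=m\,(\_,W)_q$ from Lemma \ref{lem:degphiA} on the single class $H$, using $\deg(H|_C)=2$ from Lemma \ref{lem:C.H} and $(H,W)_q=4$, to conclude $m=\tfrac12$. The only cosmetic difference is your aside about the relation holding on all of $H^2(X,\bZ)$, which is not actually needed to pin down $m$ since $H\in\Pic(X)$ and $(H,W)_q\neq 0$.
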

\begin{proof}
Let $C\subseteq P\iso \bP^2$ be a line in $P$.
From the proof of Lemma \ref{lem:degphiA} one can see that $m\in \bQ_+$ is the factor such that 
$\deg(A|_C)= m\cdot (A,W)_q$ for all line bundles $A\in \Pic(X)$.
The degree of $H$ on $C$ is $\deg(H|_C)=2$ by Lemma \ref{lem:C.H}.
Therefore, the factor $m$ is
  exactly $\frac{1}{2}$, since $(H,W)_q=4$. 
\end{proof}
\begin{remark}
 In particular Proposition \ref{prop:mostlyokonX} does not
  show base point freeness for the line bundle $H+L$ in this situation (compare Remark
  \ref{rem:halfisproblem}).  In fact, we will show in the next section that in our
  situation $H+L$ has a non-trivial base locus.
\end{remark}

\begin{corollary}\label{cor:DisLnegative}
  The line $C\subseteq P\iso \bP^2$ satisfies $\deg(L|_C)=-1$, and therefore any curve $D\subseteq P$ has
  $\deg(L|_D)<0$. 
\end{corollary}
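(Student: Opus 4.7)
The plan is to reduce the statement to a direct computation via the formula from Lemma \ref{lem:miishalf}. Recall that for a line $C\subseteq P\iso \bP^2$ and any line bundle $A\in \Pic(X)$, the lemma gives
\begin{equation*}
\deg(A|_C)=\tfrac{1}{2}(A,W)_q.
\end{equation*}
So the first step is simply to compute $(L,W)_q$ using $L=H-\delta$ and $W=2H-3\delta$ together with the known intersection data $q(H)=(H_S)^2=2$, $q(\delta)=-2$, and $(H,\delta)_q=0$ (coming from the orthogonal decomposition in Proposition \ref{prop:standard-decomposition-K3n}). This yields
\begin{equation*}
(L,W)_q = 2\,q(H) + 3\,q(\delta) = 4-6 = -2,
\end{equation*}
and hence $\deg(L|_C)=-1$, which is the first claim.

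For the second claim, I would use the fact that the Picard and numerical group of $P\iso \bP^2$ is generated by the class of a line $C$, so any irreducible curve $D\subseteq P$ is numerically equivalent in $P$ to $d\cdot C$ for some positive integer $d=\deg_{\bP^2}(D)>0$. Since restriction of line bundles commutes with pushforward from $P$ into $X$, we get
\begin{equation*}
\deg(L|_D) = d\cdot \deg(L|_C) = -d < 0,
\end{equation*}
which gives the second claim.

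There is essentially no obstacle here: once Lemma \ref{lem:miishalf} is in place, the corollary is just a one-line pairing computation combined with the triviality that curves in $\bP^2$ are positive multiples of a line. The only thing to be slightly careful about is that $\deg(L|_D)$ is interpreted via the cycle-theoretic restriction, i.e.\ intersection with $\iota_{P\,*}[D]$, which is exactly what enters the proof of Lemma \ref{lem:C.H}, so the argument is consistent with the conventions already in use.
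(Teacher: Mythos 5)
Your proof is correct and follows the same route as the paper: apply Lemma \ref{lem:miishalf} to get $\deg(L|_C)=\tfrac{1}{2}(L,W)_q$, compute $(L,W)_q=(H-\delta,2H-3\delta)_q=2q(H)+3q(\delta)=-2$ using $(H,\delta)_q=0$, and conclude $\deg(L|_C)=-1$; the second claim then follows since every curve in $P\iso\bP^2$ is numerically a positive multiple of the line class. The paper's own proof is exactly this one-line computation, so there is nothing to add.
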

\begin{proof}
  This follows immediately from Lemma \ref{lem:miishalf}, since $\deg(L|_C)= \half (L,W)_q=-1$.
\end{proof}

\begin{proof}[Proof of Proposition \ref{prop:Cextremalray}]
  Note that Lemma \ref{lem:C.H} and Corollary \ref{cor:DisLnegative} imply that  an irreducible curve
  $D\subseteq X$ 
  lies in $P$ if and only if $\deg(L|_D)< 0$. 
  
  Therefore, $C$ defines an extremal ray in $\NEbar(X)$, which is negative with respect to the
  pair $(X,\nu L)$. For $0<\nu \ll 1$ the pair $(X,\nu L)$ is klt (this is true for every effective
  divisor on $X$). Furthermore, the union of all curves in this extremal ray is $P$.
  Then \cite[Proposition 2.1]{Wierzba} implies that $X'$ exists and that it is exactly the Mukai flop in
  $P$.
\end{proof}

\begin{corollary}\label{cor:mapscoincideoutsideP}
  The rational maps $a\circ \piz$ and $\psi \circ f$ coincide outside of
  $P$ (up to the natural identification $\bP^2\dual \iso \bP(H^0(X',L')\dual)$ from Remark
  \ref{rem:natidentification}).
\end{corollary}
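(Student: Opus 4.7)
The plan is to show that both rational maps $a \circ \pi^{[2]}$ and $\psi \circ f$ are defined on $X \setminus P$ and agree there, by verifying that each is the map induced by the complete linear system $|L|$, once the target projective spaces are matched via the identification of Remark \ref{rem:natidentification}.

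First I would analyse $\psi \circ f$. By Theorem \ref{thm:Matsushita}, the Lagrangian fibration $\psi : X' \to \bP(H^0(X',L')\dual)$ is by construction the morphism associated to the complete linear system $|L'|$, and $h^0(X',L')=3$. The birational map $f\colon X \dashrightarrow X'$ is an isomorphism outside the codimension-two loci $P$ and $P'$, and by Lemma \ref{lem:biratisoonH2} it induces an isomorphism $H^0(X',L') \iso H^0(X,L)$. Hence on $X \setminus P$ the composition $\psi \circ f$ agrees with the rational map $X \dashrightarrow \bP(H^0(X,L)\dual)$ defined by the complete linear system $|L|$ (transported along this iso).

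Next I would analyse $a \circ \pi^{[2]}$. The indeterminacy locus of $\pi^{[2]}$ is $P$, so $a \circ \pi^{[2]}$ is regular on $X \setminus P$. The morphism $a\colon \Hilb^2(\bP^2) \to \bP^2\dual$ is by definition the morphism associated to the sub-linear system of $|a^*\dO(1)|$ coming from pullback, i.e. to the image of $H^0(\bP^2\dual, \dO(1))$ in $H^0(\Hilb^2(\bP^2), a^*\dO(1))$. By Lemma \ref{lem:Lispullback}, $(a \circ \pi^{[2]})^*\dO(1) = L$, so $a \circ \pi^{[2]}$ is the rational map associated to the three-dimensional sub-linear system of $|L|$ given by the image of
\begin{equation*}
H^0(\bP^2\dual,\dO(1)) \inj H^0(\Hilb^2(\bP^2), a^*\dO(1)) \inj H^0(X,L)
\end{equation*}
appearing in Remark \ref{rem:natidentification}.

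Finally, Remark \ref{rem:natidentification} establishes that this composition is an \emph{isomorphism} $H^0(\bP^2\dual,\dO(1)) \iso H^0(X,L)$, so the sub-linear system in question is in fact all of $|L|$, and furthermore gives the canonical identification $\bP^2\dual \iso \bP(H^0(X',L')\dual)$. Under this identification, both $a \circ \pi^{[2]}$ and $\psi \circ f$ are the rational map $X \dashrightarrow \bP(H^0(X,L)\dual)$ determined by the complete linear system $|L|$, and thus they coincide on $X \setminus P$. The only subtlety, hardly an obstacle, is making sure the various identifications of $H^0$'s and of the target $\bP^2$'s are compatibly matched; this is exactly what Remark \ref{rem:natidentification} records, so no further content is needed.
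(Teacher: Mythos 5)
Your proposal is correct and follows essentially the same route as the paper: both maps are identified as the map induced by the complete linear system of $L$, using $(a\circ \piz)^*\dO(1)=L=f^*(L')$ (Lemma \ref{lem:Lispullback}), the base point freeness of $L'$, and the identification of sections recorded in Remark \ref{rem:natidentification}. Your version merely spells out the standard linear-system bookkeeping that the paper's two-line proof leaves implicit.
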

\begin{proof} 
Both rational maps are induced by the global sections of 
\begin{equation*}
  (a\circ \piz)^*(\dO(1))=L=f^*(L')=(\psi \circ f)^*(\dO(1)), 
\end{equation*}
  and the base locus of $L$ is contained in $P$, since $L'$ is base point free, and $f$ is the Mukai flop
  in $P$.
\end{proof}

\section{Two-dimensional base locus of a line bundle with divisibility two}
\label{sec:2dimBL}
Let again $S$ be a K3 surface with $\Pic(S)=\bZ\cdot H_S$ for an ample line bundle with $(H_S)^2=2$, and
keep the notation from page \pageref{notation}. 
In this section, we use the  explicit geometry of $X=\Hilb^2(S)$ in order to study the base
locus of the line bundle $H+L$. The main result is:
\begin{theorem}\label{thm:baselocusH+L}
  The ample line bundle $H+L\in \Pic(X)$ has a non-trivial base locus which is isomorphic to $\bP^2$.
\end{theorem}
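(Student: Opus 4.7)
The plan is to prove both inclusions $P \subseteq \mathrm{Bs}(H+L)$ and $\mathrm{Bs}(H+L) \subseteq P$ separately. The second (easier) inclusion follows from the corresponding line bundle on $X'$: by Lemma \ref{lem:biratisoonH2}, the isomorphism $X \smallsetminus P \iso X' \smallsetminus P'$ identifies $H+L$ with $H'+L' \in \Pic(X')$ together with their spaces of global sections. Since $H'+L' \in \Nef(X')$ by Lemma \ref{lem:cones}.\ref{it:cones-d}, Proposition \ref{prop:okinspeccase} shows it is base point free; transferring back yields $\mathrm{Bs}(H+L) \subseteq P$.

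For the reverse inclusion I would exploit the identification $H+L = 2H - \delta = \det((2H_S)^{[2]})$ recalled in the proof of Proposition \ref{prop:mostlyokonX}. Combining the natural evaluation $H^0(S, 2H_S) \to H^0(X, (2H_S)^{[2]})$ with the wedge map $\Lambda^2 H^0(X, (2H_S)^{[2]}) \to H^0(X, \det (2H_S)^{[2]})$ gives a linear map
\begin{equation*}
  \Phi \colon \Lambda^2 H^0(S, 2H_S) \too H^0(X, H+L),
\end{equation*}
characterized on a reduced subscheme $z=\{x,y\}$ by $\Phi(s_1 \wedge s_2)(z) = s_1(x)\otimes s_2(y) - s_2(x)\otimes s_1(y) \in (2H_S)|_x \otimes (2H_S)|_y$. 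I would show $\Phi$ is an isomorphism by a dimension count: $\dim \Lambda^2 H^0(S,2H_S) = \binom{6}{2} = 15$ using $h^0(S, 2H_S) = 6$ (Riemann--Roch on $S$ plus Kodaira vanishing), and $h^0(X, H+L) = \chi(X, H+L) = \binom{6}{2} = 15$ using Proposition \ref{prop:EGL} (with $q(H+L)=6$) together with Kodaira vanishing (note that $H+L = \frac{1}{2}H + \frac{1}{2}(3H-2\delta)$ lies in the interior of $\Nef(X)$ by Lemma \ref{lem:cones}.\ref{it:cones-c}, hence is ample). For injectivity, a relation $\sum \lambda_{ij}\, s_i\wedge s_j \in \ker \Phi$ translates, via the explicit formula above, into the identity $\sum \lambda_{ij}(s_i \otimes s_j - s_j \otimes s_i) = 0$ in $H^0(\SxS, 2H_S \boxtimes 2H_S) = H^0(S, 2H_S)^{\otimes 2}$ (by Künneth), and linear independence of antisymmetric tensors forces all $\lambda_{ij}=0$.

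It then suffices to verify that every section in the image of $\Phi$ vanishes on $P$. The crucial input is that the double cover $\pi \colon S \to \bP^2$ is branched along a sextic, so $\pi_* \dO_S = \dO_{\bP^2} \oplus \dO_{\bP^2}(-3)$ and hence $\pi_*(2H_S) = \dO(2) \oplus \dO(-1)$, giving $H^0(S, 2H_S) = H^0(\bP^2, \dO(2))$: every section of $2H_S$ is $\rho$-invariant. For $z = \{x, \rho(x)\} \in P$, under the canonical identification $(2H_S)|_{\rho(x)} \iso (2H_S)|_x$ induced by $\rho$ one has $s_i(\rho(x)) = s_i(x)$, so
\begin{equation*}
  \Phi(s_1\wedge s_2)(z) = s_1(x) \otimes s_2(x) - s_2(x) \otimes s_1(x) = 0
\end{equation*}
in the one-dimensional space $(2H_S)|_x \otimes (2H_S)|_x$. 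Combined with surjectivity of $\Phi$, this shows $P \subseteq \mathrm{Bs}(H+L)$, completing the proof together with the first paragraph. The main obstacle I anticipate is pinning down the identification between $\Phi(s_1 \wedge s_2)$ and the antisymmetric tensor $s_1\otimes s_2 - s_2 \otimes s_1$ on $\SxS$ precisely enough for the Künneth-based injectivity argument to go through cleanly.
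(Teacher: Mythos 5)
Your reduction of the easier inclusion $\mathrm{Bs}(H+L)\subseteq P$ rests on a false claim: $H'+L'=2H'-\delta'$ is \emph{not} nef on $X'$. By Lemma \ref{lem:cones}.\ref{it:cones-d} one has $\Nef(X')=\langle 3H'-2\delta',\,H'-\delta'\rangle$, and $2H'-\delta'=(3H'-2\delta')-(H'-\delta')$ lies outside this cone; more conceptually, $H+L$ is in the interior of $\Nef(X)$ (it is ample, as you note yourself), and since $\Nef(X)$ and $f^*\Nef(X')$ meet only along the wall $\bR_{\geq 0}\cdot(3H-2\delta)$, a class that is ample on $X$ cannot be nef on $X'$. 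Hence Proposition \ref{prop:okinspeccase} does not apply to $H'+L'$. The inclusion itself is easy to repair: $H$ is base point free (Lemma \ref{lem:Hbpfinexample}), and the base locus of $L=f^*(L')$ is contained in $P$ because $L'$ is base point free (Theorem \ref{thm:Matsushita}) and $f$ is an isomorphism off $P$ (Lemma \ref{lem:biratisoonH2}, Corollary \ref{cor:mapscoincideoutsideP}); since $\mathrm{Bs}(H+L)\subseteq \mathrm{Bs}(H)\cup\mathrm{Bs}(L)$, this gives $\mathrm{Bs}(H+L)\subseteq P$, which is all that is needed.

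Your argument for the essential inclusion $P\subseteq \mathrm{Bs}(H+L)$ is correct and genuinely different from the paper's. The paper proves surjectivity of the multiplication map $\mu\colon H^0(X,L)\otimes H^0(X,H)\to H^0(X,H+L)$ (Proposition \ref{prop:musurj}) by transferring the computation to $\bP^2\times\bP^2$ and determining the three-dimensional kernel explicitly, and then uses $\deg(L|_C)=-1$ on lines $C\subseteq P$ to conclude that every section of $H+L$ vanishes along $P$. You instead use $H+L=\det\big((2H_S)^{[2]}\big)$ and show that the natural map $\Lambda^2H^0(S,2H_S)\to H^0(X,H+L)$ is an isomorphism: both sides are $15$-dimensional, and injectivity follows from evaluation at reduced subschemes together with K\"unneth on $S\times S$ (this restriction-to-$S\times S$ step is legitimate and in fact parallels Lemma \ref{lem:comparekernels}); the vanishing on $P$ then comes from $H^0(S,2H_S)=\pi^*H^0(\bP^2,\dO(2))$, i.e.\ $\rho$-invariance of all sections, so the antisymmetrized value at $\{x,\rho(x)\}$ dies in the one-dimensional tensor square. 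This route is arguably cleaner, since the only computation is a dimension count, whereas the paper needs an explicit kernel computation; the price is that you must quote $\det\big((2H_S)^{[2]}\big)=2H-\delta$ (Lehn) and the base-change description of the fibres of the tautological bundle at reduced points. Two small points to make explicit: the evaluation description of $\Phi$ only covers reduced $z$, so the vanishing argument applies a priori only on the dense open set of reduced points $\{x,\rho(x)\}$ of the irreducible surface $P$, and you should conclude by continuity; and the injectivity argument needs that a section of $2H_S\boxtimes 2H_S$ vanishing off the diagonal vanishes identically, which holds since $S\times S$ is irreducible.
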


For the proof, we consider the map
\begin{equation*}
  \mu \colon H^0(X,L) \otimes H^0(X,H) \to H^0 (X,H+L),
\end{equation*}
and we show:
\begin{proposition}\label{prop:musurj}
  The map $\mu$ is surjective. 
\end{proposition}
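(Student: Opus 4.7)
The strategy is to identify all three cohomology groups with natural tensor constructions on $V \coloneqq H^0(S, H_S)$ (with $\dim V = 3$), reducing the assertion to a linear-algebraic surjectivity statement. Using $\eps_S^* H = \zeta_S^*(H_S \boxtimes H_S)$ and $\eps_S^*(H+L) = \zeta_S^*(2H_S \boxtimes 2H_S) \otimes \dO_{\SShat}(-E_S)$, combined with ${(\zeta_S)}_* \dO_{\SShat}(-E_S) = \cI_{\Delta_S}$ and K\"unneth, one obtains
\begin{align*}
H^0\bigl(\SShat, \eps_S^* H\bigr) &\iso V \otimes V, \\
H^0\bigl(\SShat, \eps_S^* L\bigr) &\iso \ker\bigl(V \otimes V \xrightarrow{\text{mult}} H^0(S, 2H_S)\bigr), \\
H^0\bigl(\SShat, \eps_S^*(H+L)\bigr) &\iso \ker\bigl(U \otimes U \xrightarrow{\text{mult}} H^0(S, 4H_S)\bigr),
\end{align*}
where $U \coloneqq H^0(S, 2H_S)$. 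Since $\pi \colon S \to \bP^2$ is the $(2{:}1)$-cover given by $|H_S|$ with $\pi_* \dO_S = \dO_{\bP^2} \oplus \dO_{\bP^2}(-3)$, pullback gives an isomorphism $\Sym^2 V \xrightarrow{\sim} H^0(\bP^2, \dO(2)) = U$; hence the first kernel equals $\Lambda^2 V$ and the third equals $\Lambda^2 U$. For the double cover $f \colon \SShat \to X$ we have $f_* \dO_{\SShat} = \dO_X \oplus \dO_X(-\delta)$, and since $\widehat{\tau}$ acts by $-1$ on the normal bundle of the ramification divisor $E_S$, it acts by $-1$ on $\dO_{\SShat}(-E_S)$. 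The $\widehat{\tau}$-invariant parts thus produce
\begin{equation*}
H^0(X, H) \iso \Sym^2 V, \qquad H^0(X, L) \iso \Lambda^2 V, \qquad H^0(X, H+L) \iso \Lambda^2 \Sym^2 V
\end{equation*}
of dimensions $6$, $3$, and $15$ (matching Theorem \ref{thm:Matsushita} for $L$ and Proposition \ref{prop:EGL} together with Kawamata--Viehweg vanishing for $H$ and $H+L$).

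Tracing through the identifications, $\mu$ becomes the natural $GL(V)$-equivariant multiplication
\begin{equation*}
\Lambda^2 V \otimes \Sym^2 V \longrightarrow \Lambda^2 \Sym^2 V, \qquad (s \wedge t) \otimes (uv) \longmapsto (su) \wedge (tv) + (sv) \wedge (tu),
\end{equation*}
and surjectivity is an elementary check. Fixing a basis $\{e_0, e_1, e_2\}$ of $V$ and writing $x_{ij} \coloneqq e_i e_j$ for the induced basis of $\Sym^2 V$, the special case $u = v = e_k$ gives $\mu\bigl((e_i \wedge e_j) \otimes e_k^2\bigr) = 2\, x_{ik} \wedge x_{jk}$, which as $i<j$ and $k$ vary produces the $9$ wedges $x_I \wedge x_J$ with $I \cap J \neq \emptyset$. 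The remaining $6$ wedges with disjoint multi-indices are obtained from cases with $u \neq v$: for example $\mu\bigl((e_0 \wedge e_1) \otimes (e_0 e_1)\bigr) = x_{00} \wedge x_{11}$ (the second term being $x_{01} \wedge x_{01} = 0$) and $\mu\bigl((e_0 \wedge e_1) \otimes (e_0 e_2)\bigr) = x_{00} \wedge x_{12} - x_{01} \wedge x_{02}$ yields the ``mixed'' $x_{00} \wedge x_{12}$ modulo a wedge already in the image. Thus all $15$ basis vectors of $\Lambda^2 \Sym^2 V$ lie in $\im \mu$. Alternatively, one may invoke Schur's lemma: $\Lambda^2 \Sym^2 V$ is irreducible under $SL(V)$ of dimension $15$, $\mu$ is $GL(V)$-equivariant and not identically zero (e.g.\ by the first computation above), hence surjective.

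The main obstacle I anticipate is the bookkeeping of $\widehat{\tau}$-equivariance in the first step, and in particular pinning down the sign by which $\widehat{\tau}$ acts on $\dO_{\SShat}(-E_S)$. This sign is precisely what interchanges ``$\tau$-antisymmetric on $S\times S$'' with ``$\widehat{\tau}$-invariant on $\SShat$'', and is therefore responsible for the clean appearance of $\Lambda^2 V$ (rather than $\Sym^2 V$) in $H^0(X, L)$ and of $\Lambda^2 \Sym^2 V$ in $H^0(X, H+L)$. Once this identification is settled, surjectivity of $\mu$ follows from the elementary linear-algebraic computation above.
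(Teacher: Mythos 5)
Your argument is correct in substance but organized differently from the paper's. The paper (Lemma \ref{lem:dimcount}, Lemma \ref{lem:comparekernels}, Proposition \ref{prop:kermu}) writes explicit bases of $H^0(X,H)$ and $H^0(X,L)$ as symmetric resp.\ antisymmetric tensors in the $x_i$, transfers the computation to the auxiliary multiplication map $\mu'\colon V\otimes W\to H^0(\bP^2\times\bP^2,\dO(2,2))$, shows $\dim\ker\mu'=3$, and concludes by the count $18-3=15=h^0(X,H+L)$. You instead make the equivariant identifications $H^0(X,H)\iso\Sym^2 V$, $H^0(X,L)\iso\Lambda^2 V$, $H^0(X,H+L)\iso\Lambda^2\Sym^2 V$ (with $V=H^0(S,H_S)$) and prove surjectivity of the induced map $\Lambda^2 V\otimes\Sym^2 V\to\Lambda^2\Sym^2 V$ directly, or by Schur's lemma; this gives a cleaner conceptual reason for surjectivity and is consistent with the paper, since your map has three-dimensional kernel, matching Proposition \ref{prop:kermu}. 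The sign issue you single out is genuine and you resolve it correctly: $\widehat{\tau}$ acts by $-1$ on the normal bundle of its fixed divisor $E_S$, so invariant sections of $\eps_S^*(L)$, resp.\ $\eps_S^*(H+L)$, correspond to swap-anti-invariant tensors; alternatively the counts $h^0(X,L)=3$ and $h^0(X,H+L)=15$ force this sign.

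One intermediate claim is false as stated: $H^0(\SShat,\eps_S^*(H+L))\iso\ker\bigl(U\otimes U\to H^0(S,4H_S)\bigr)$ is not equal to $\Lambda^2 U$. Since $U=\pi^*H^0(\bP^2,\dO(2))$, the image of the multiplication map is $\pi^*H^0(\bP^2,\dO(4))$, of dimension $15$, while $h^0(S,4H_S)=18$; hence this kernel has dimension $36-15=21$ and contains, besides $\Lambda^2 U$, the six-dimensional kernel of $\Sym^2 U\to H^0(\bP^2,\dO(4))$. (The analogous step for $L$ is fine, because $\Sym^2 V\to H^0(S,2H_S)$ is an isomorphism.) The slip does not propagate: the swap-anti-invariant part of this $21$-dimensional kernel is still exactly $\Lambda^2 U$, since all of $\Lambda^2 U$ vanishes along the diagonal, so $H^0(X,H+L)\iso\Lambda^2\Sym^2 V$ holds and the rest of your proof goes through unchanged. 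A final notational point: you write $f$ for the quotient map $\SShat\to X$, which the paper denotes $\eps_S$, reserving $f$ for the flop $X\dashrightarrow X'$.
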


This immediately implies Theorem \ref{thm:baselocusH+L}:
\begin{proof}[Proof of implication: Proposition \ref{prop:musurj} $\implies$ Theorem \ref{thm:baselocusH+L}]
  Consider a line $C\subseteq P$. Use Corollary \ref{cor:DisLnegative} to see that the degree
  $\deg(L|_C)=-1$ is negative.
Therefore, $H^0(C,L|_C)=0$, and thus every global section of $L$ has base points along $P$. 
Since $\mu$ is surjective, this implies that every global section of $H+L$ vanishes along $P$ (since it
has a factor coming from $H^0(X,L)$ which already vanishes along $P$).
\end{proof}
For the rest of this section we prove Proposition \ref{prop:musurj} in several steps.

\begin{lemma} \label{lem:dimcount}
The spaces of global sections which occur have the following dimensions:
  \begin{compactenum}
      \item $h^0(X,L) = 3$, \label{part:h0L}
      \item $h^0(X,H) =6$, and \label{part:h0H}
      \item $h^0(X,H+L)= 15$. \label{part:h0H+L}
  \end{compactenum}
\end{lemma}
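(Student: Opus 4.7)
The plan is to combine the Riemann--Roch formula for K3$^{[2]}$-type (Proposition \ref{prop:EGL}) with suitable vanishing results, treating $L$ separately because it is not big.

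First I would compute the Beauville--Bogomolov--Fujiki squares. Using the orthogonal decomposition $\Pic(X) = \bZ H \oplus \bZ\delta$ with $q(H) = (H_S)^2 = 2$, $q(\delta) = -2$, and $(H,\delta)_q = 0$, one obtains $q(L) = q(H-\delta) = 2 + (-2) = 0$, $q(H) = 2$, and $q(H+L) = q(2H-\delta) = 8 - 2 = 6$. Plugging into Proposition \ref{prop:EGL} (with $n=2$) yields $\chi(X,L) = \binom{3}{2} = 3$, $\chi(X,H) = \binom{4}{2} = 6$, and $\chi(X,H+L) = \binom{6}{2} = 15$, matching the claimed values.

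For parts (b) and (c) it then suffices to show that higher cohomology vanishes. Both $H$ and $H+L$ are big and nef on $X$: $H$ by Lemma \ref{lem:Hbpfinexample}, and $H+L = 2H-\delta$ because one checks the decomposition $2H - \delta = \tfrac{1}{2}H + \tfrac{1}{2}(3H-2\delta)$ (with positive coefficients in the rays spanning $\Nef(X)$ from Lemma \ref{lem:cones}.\ref{it:cones-c}) while $q(H+L) = 6 > 0$. Since $X$ has trivial canonical bundle, Kawamata--Viehweg vanishing gives $H^i(X,H) = H^i(X,H+L) = 0$ for all $i>0$, and hence $h^0 = \chi$ in both cases.

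Part (a) is the main obstacle, since $L$ has $q(L)=0$, so it is not big and Kawamata--Viehweg does not apply directly; moreover $L$ is not even nef on $X$ (it lies in $\birKbar_X \setminus \Nef(X)$ by Lemma \ref{lem:cones}). The way around this is to pass to the birational model $X'$: by Lemma \ref{lem:biratisoonH2} there is a canonical isomorphism $H^0(X,L) \iso H^0(X',L')$. On $X'$, the line bundle $L'$ is primitive, nef (it spans a ray of $\Nef(X')$ by Lemma \ref{lem:cones}.\ref{it:cones-d}), and satisfies $q(L') = 0$. Matsushita's Theorem \ref{thm:Matsushita} then yields $h^0(X',L') = n+1 = 3$, completing the computation.
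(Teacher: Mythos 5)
Your proposal is correct and follows essentially the same route as the paper: Riemann--Roch (Proposition \ref{prop:EGL}) plus vanishing for the big and nef bundles $H$ and $H+L$, and for $L$ the passage to $X'$ and Matsushita's Theorem \ref{thm:Matsushita} via the Lagrangian fibration. Your explicit check that $H+L=\tfrac{1}{2}H+\tfrac{1}{2}(3H-2\delta)$ lies in $\Nef(X)$ is a small detail the paper leaves implicit, but the argument is the same.
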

\begin{proof}
  Part \ref{part:h0L} holds, because $L'$ is the primitive line bundle which induces the Lagrangian
  fibration on $X'$. 
  Then $h^0(X,L)=h^0(X',L') = h^0(\bP^2, \dO(1)) =  3$.

  Since $H$ is big and nef, use Kodaira vanishing and the Riemann--Roch formula for K3$^{[2]}$-type (see
  Proposition \ref{prop:EGL})
  to observe part \ref{part:h0H}:
  \begin{equation*}
    h^0(X,H)=\chi(X,H)=\binom{\frac{1}{2}q(H)+3}{2}=\binom{4}{2}=6.
  \end{equation*}
  
  The same arguments apply for part \ref{part:h0H+L}, which shows
   $ h^0(X,H+L)=\binom{6}{2}=15$, 
  since $q(H+L)=6$.
\end{proof}

In the following we determine the dimension of $\ker(\mu)$ in order to prove Proposition \ref{prop:musurj}.

Fix coordinates for $\bP^2$ and the induced coordinates on $\bP^2\dual$. 

\begin{name}[Index convention]
  We will frequently deal with indices in the set $\{0,1,2\}$. In order to keep the notation as slender
    as possible, we will identify this set  with $\bZ/3\bZ$. Let us further write $I\coloneqq \bZ/3\bZ$.
\end{name}

Since $\pi$ is induced by $|H_S|$ the basis $\{x_i\}_{i\in I}\subseteq H^0(\Pz,\dOi)$ gives rise to a basis
$\{\pi^*(x_i)\}_{i\in I}\subseteq H^0(S, H_S)$.

Note that for every line bundle $A\in \Pic(X)$ the map
\begin{equation}\label{eq:injective}
\eps_S^*\colon H^0(X,A) \inj H^0(\SShat, \eps_S^*A)
\end{equation}
is injective.

The global sections $H^0(X,H)$ are given by symmetric polynomials in the global sections $H^0(S,H_S)$ in
the following sense: 
A basis $\{t_i^X,v_i^X\}_{i\in I}\subseteq H^0(X, H)$ is determined by the property 
\begin{align}
  &\eps_S^*(t_i^X)= \zeta_S^*(\pipi)^*(x_{i+1}\otimes x_{i+2} + x_{i+2}\otimes x_{i+1} ) \quad {\rm
    and }\notag \\
  &\eps_S^*(v_i^X)= \zeta_S^*(\pipi)^*(x_i\otimes x_i) . \label{eq:vi}
\end{align}
(One way to see that these span already the global sections of $H$ is Lemma \ref{lem:dimcount}.\ref{part:h0H}).

For $i\in I$ define the following sections in $H^0(\PxP, \dOii)$:
\begin{align*}
  &t_i= x_{i+1}\otimes x_{i+2} + x_{i+2}\otimes x_{i+1} \quad {\rm and }\\
  &v_i= x_i\otimes x_i ,
\end{align*}
and let $W\subseteq H^0(\PxP, \dOii)$ be the six-dimensional  subspace spanned by the $\{t_i\}_{i\in I}$
and $\{v_i\}_{i\in I}$.

We now want to describe a basis in $H^0(X,L)$. 
Consider the basis $\{x_i\dual\}_{i\in I}\subset H^0(\Pdual,\dOi)$, which determines a basis
$\{s_i^X\coloneqq f^*\psi^*(x_i\dual)\}_{i\in I} \subseteq H^0(X,L)$ via pullback. By the injectivity of
\eqref{eq:injective}, it is enough to describe the elements $\{\eps_S^*(s_i)\}\subseteq
H^0(\SShat,\eps_S^*(L))$.
Note that 
\begin{equation*}
  \eps_S^*(s_i^X) = \eps_S^*(f^*\psi^*(x_i\dual))
  =\pipihat^* (a \circ \eps_{\Pz})^* (x_i\dual),
\end{equation*}
since  $a \circ \eps_{\Pz}\circ \pipihat$ and $\psi\circ f\circ \eps_S$ are both well-defined outside the
two-dimensional set $\widehat{\Gamma_\rho}$ and coincide on their domain of definition by Corollary
\ref{cor:mapscoincideoutsideP}. 
Let $\{x_i\dual=0\}\subseteq \Pdual$ be the line cut out by $x_i\dual$.
One can verify that  the preimage 
$(a \circ \eps_{\Pz})^{-1}(\{x_i\dual =0\})$ is equal to the strict transform of the set 
\begin{equation*}
\{x_{i+1}\otimes x_{i+2} - x_{i+2}\otimes x_{i+1}=0\}\subseteq \PxP.
\end{equation*}

Fix the notation $s_i\coloneqq x_{i+1}\otimes x_{i+2} - x_{i+2}\otimes x_{i+1} \in H^0(\PxP, \dOii)$, and let
$V\subseteq H^0(\PxP, \dOii)$ be the subspace spanned by the $\{s_i\}_{i\in I}$.

Denote by $E_{\Pz}\subseteq \PPhat$ the exceptional divisor of the  blow-up, and pick a section 
$s_E\in H^0(\PPhat, \dO(E_{\Pz}))$.
With this notation the strict transform of the set $\{s_i=0\}$ is cut out by
\begin{equation*}
  \zeta_{\bP^2}^*s_i\otimes s_E^{-1} \in H^0(\PPhat, \zeta_{\Pz}^*\dOii
  \otimes \dO(- E)).
\end{equation*}
Up to renormalizing $s_E$, we therefore get that 
\begin{align}
  \eps_S^*(s_i^X) &= \pipihat^*(a\circ \eps_{\bP^2})^*(x_i\dual)
= \pipihat^*(\zeta_{\Pz}^*s_i\otimes s_E^{-1}) \notag \\ 
  &=\zeta_S^*(\pipi)^*s_i\otimes\pipihat^*( s_E^{-1}). \label{eq:si}
\end{align}

\begin{lemma}\label{lem:comparekernels}
  The kernel of the natural map
  \begin{equation*}
    \mu'\colon V \otimes W \to H^0(\PxP,\dO(2,2))
  \end{equation*}
  has the same dimension as the kernel of $\mu$.
\end{lemma}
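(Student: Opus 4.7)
The plan is to show that the bases described in the lead-up set up a natural linear isomorphism between $V\otimes W$ and $H^0(X,L)\otimes H^0(X,H)$, and that under this identification the two multiplication maps $\mu$ and $\mu'$ agree up to an injective operation, so that their kernels are literally identified.

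First I would record the explicit identification $\Phi\colon H^0(X,L)\otimes H^0(X,H)\overset{\iso}{\to} V\otimes W$ coming from the bases: $s_i^X\mapsto s_i$, $t_i^X\mapsto t_i$, $v_i^X\mapsto v_i$. This is a genuine isomorphism of vector spaces because both domain and codomain have the same dimension and the basis vectors are matched. Then I want to compare $\mu(\alpha)$ and $\mu'(\Phi(\alpha))$ for an arbitrary element $\alpha\in H^0(X,L)\otimes H^0(X,H)$.

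For this comparison I would pull everything back to $\SShat$ via the injective restriction $\varepsilon_S^*\colon H^0(X,H+L)\hookrightarrow H^0(\SShat,\varepsilon_S^*(H+L))$ of \eqref{eq:injective}. Using the formulas \eqref{eq:vi} and \eqref{eq:si}, together with the fact that $\varepsilon_S^*$ is a ring homomorphism on sections, one obtains for every $s_i^X\otimes w_i^X$ (with $w_i^X\in\{t_j^X,v_j^X\}$) the factorization
\begin{equation*}
\varepsilon_S^*\bigl(s_i^X\cdot w_i^X\bigr)\;=\;\pipihat^*(s_E^{-1})\cdot \zeta_S^*(\pipi)^*\bigl(s_i\cdot w_i\bigr).
\end{equation*}
Summing over the elements of $\alpha$ yields
\begin{equation*}
\varepsilon_S^*\mu(\alpha)\;=\;\pipihat^*(s_E^{-1})\cdot \zeta_S^*(\pipi)^*\mu'(\Phi(\alpha)).
\end{equation*}

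Now I would argue that each of the three operations on the right is injective on global sections. The pullback $\zeta_S^*$ is injective because $\zeta_S$ is a proper birational morphism; the pullback $(\pipi)^*$ is injective because $\pi\times\pi$ is a finite (surjective) morphism, so a section that vanishes after pullback vanishes on a dense open subset and thus identically; and multiplication by the non-zero rational section $\pipihat^*(s_E^{-1})$ of a line bundle on $\SShat$ is injective as multiplication by any non-zero element in a function field. Combining these with the injectivity of $\varepsilon_S^*$, one has the chain of equivalences $\mu(\alpha)=0 \iff \varepsilon_S^*\mu(\alpha)=0 \iff \mu'(\Phi(\alpha))=0$, so $\Phi$ restricts to an isomorphism $\ker(\mu)\overset{\iso}{\to}\ker(\mu')$. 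In particular the dimensions agree, which is the claim.

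There is no serious obstacle here: the main content is just unpacking the defining formulas for $s_i^X$, $t_i^X$, $v_i^X$ and verifying that the factor $\pipihat^*(s_E^{-1})$, which records the difference between multiplying in $H^0(X,H+L)$ and multiplying in $H^0(\PxP,\dO(2,2))$, plays no role for the vanishing question. The one small subtlety to double-check is that $\pipihat^*(s_E^{-1})$ is genuinely a non-zero (rational) section on $\SShat$, which follows because $\pipihat$ is a dominant rational map and $s_E^{-1}$ is a non-zero rational section of $\dO(-E_{\Pz})$.
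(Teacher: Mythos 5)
Your proof is correct and follows essentially the same route as the paper: both arguments pull everything back to $\SShat$ and use the relations \eqref{eq:vi} and \eqref{eq:si} together with the injectivity of $\eps_S^*$ to compare $\mu$ and $\mu'$ through the basis correspondence. The only cosmetic difference is that you identify the kernels directly via the nonzero rational factor $\pipihat^*(s_E^{-1})$, whereas the paper restricts to the open set $\SxS\setminus\Delta_S$, observes that the two compositions have the same image, and concludes by rank--nullity from the equality of the domain dimensions.
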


\begin{proof}

Set $U\coloneqq \SxS\setminus \Delta_S\iso \SShat \setminus E_S$, where $E_S$ is the exceptional divisor, and consider the composition 
\begin{align*}
   H^0(X,L) \otimes H^0(X,H) \overset{\mu}\too &H^0 (X,H+L)\\
   &\inj H^0(\SShat, \eps_S^*(H+L)) 
   \inj H^0(U, \eps_S^*(H+L)|_U)
\end{align*}
On the other hand there is a composition
\begin{align*}
  V\otimes W \overset{\mu'}{\too} H^0(\PxP,\dO(2,2)) 
  &\inj H^0(\SxS, H_S^{\otimes 2}\boxtimes H_S^{\otimes 2}) \\
  &\inj H^0(\SShat, \eps_S^*(H+L)) 
  \inj H^0(U, \eps_S^*(H+L)|_U).
\end{align*}

The relations \eqref{eq:vi} and \eqref{eq:si} imply that the images of both compositions coincide. 
Since $\dim V= \dim H^0(X,L)$ and $\dim W = \dim H^0(X,H)$ (because they have corresponding base elements), this
implies that the kernel of $\mu'$ has the same dimension as the kernel of $\mu$.
\end{proof}

\begin{proposition}\label{prop:kermu}
  The kernel of the map $\mu'$ is three-dimensional.
\end{proposition}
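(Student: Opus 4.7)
\medskip

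The plan is to exploit the $\tau$-action (swapping the two factors of $\mathbb{P}^2\times\mathbb{P}^2$) and the corresponding eigenspace decomposition. First I would note that $V$ is the $(-1)$-eigenspace and $W$ the $(+1)$-eigenspace of $\tau$ acting on $H^0(\mathbb{P}^2\times\mathbb{P}^2,\dOii)$, and these account for all $3+6=9=\dim H^0(\dOii)$ sections. Similarly, $\tau$ acts on $H^0(\mathbb{P}^2\times\mathbb{P}^2,\dO(2,2))$ with a $(+1)$-eigenspace of dimension $\binom{7}{2}=21$ (the symmetric tensors) and a $(-1)$-eigenspace of dimension $\binom{6}{2}=15$ (the antisymmetric tensors).

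Since $V$ is antisymmetric and $W$ is symmetric under $\tau$, the image of $\mu'$ lies in the $(-1)$-eigenspace, which already gives $\dim \ker \mu' \geq 18 - 15 = 3$. For the reverse inequality I would show that $\mu'$ surjects onto this 15-dimensional antisymmetric subspace. The key ingredient is that the full multiplication map
\begin{equation*}
  H^0(\dOii)\otimes H^0(\dOii)\too H^0(\dO(2,2))
\end{equation*}
is surjective; this is standard, following by factoring through $H^0(\dO(1))\otimes H^0(\dO(1))\too H^0(\dO(2))$ on each factor $\mathbb{P}^2$.

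Now the full multiplication map is $\tau$-equivariant when $\tau$ acts diagonally on the tensor product, so it decomposes as a direct sum of two maps, one onto each eigenspace of $H^0(\dO(2,2))$. Under this decomposition $V\otimes V + W\otimes W$ contributes to the symmetric part, while $V\otimes W + W\otimes V$ contributes to the antisymmetric part. Since multiplication is commutative, the images of $V\otimes W$ and $W\otimes V$ agree as subspaces of $H^0(\dO(2,2))$, so the antisymmetric part is entirely hit by $\mu'$. This forces $\dim\mathrm{Im}(\mu')=15$, hence $\dim\ker\mu'=3$.

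The only genuinely non-formal step is confirming surjectivity of the full multiplication map on $\mathbb{P}^2\times\mathbb{P}^2$, but this is a routine consequence of the K\"unneth formula together with the analogous surjectivity on each $\mathbb{P}^2$ factor; I do not expect any serious obstacle.
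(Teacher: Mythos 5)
Your argument is correct, but it takes a genuinely different route from the paper's. The paper proves the proposition by direct computation: it exhibits the three explicit kernel elements $s_{k+1}\otimes t_{k+2}+s_{k+2}\otimes t_{k+1}+2\,s_k\otimes v_k$ for $k\in I$, checks by substituting the definitions of $s_k,t_k,v_k$ that they lie in $\ker(\mu')$, and obtains the reverse inclusion by an explicit computation with basis vectors of $H^0(\PxP,\dO(2,2))$. You instead argue equivariantly: $V$ and $W$ are exactly the $(-1)$- and $(+1)$-eigenspaces of the swap involution on $H^0(\PxP,\dOii)$ (correct, since $3+6=9$), so the image of $\mu'$ lies in the $15$-dimensional anti-invariant part of $H^0(\PxP,\dO(2,2))$, giving $\dim\ker\mu'\geq 3$; and the reverse bound follows because the full multiplication map $H^0(\dOii)\otimes H^0(\dOii)\to H^0(\dO(2,2))$ is surjective (surjectivity on each $\bP^2$-factor plus K\"unneth) and $\tau$-equivariant, so each eigenspace of the target is the image of the corresponding eigenspace of the source (average with $\tau$ to see this), and by commutativity of multiplication the images of $V\otimes W$ and $W\otimes V$ coincide, forcing $\dim\operatorname{im}(\mu')=15$. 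Your eigenspace dimensions ($21$ and $15$) are right, and the argument is complete. Your approach is cleaner and avoids any basis computation; the paper's computation has the minor advantage of producing explicit generators of the kernel, but these are not used elsewhere --- only the dimension count enters the proof of Proposition \ref{prop:musurj}.
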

\begin{proof}
  The kernel of $\mu'$ is spanned by the three elements
   $ s_{k+1}\otimes t_{k+2} + s_{k+2}\otimes t_{k+1} + 2 s_k \otimes v_k$
  for $k\in I$.

  The fact that these elements lie in the kernel can immediately be verified by inserting the definitions of $s_k,
  t_k$, and $v_k$.
  The other inclusion can be shown by an explicit computation with basis vectors in
  $H^0(\bP^2\times \bP^2,  \dO(2,2))$.
\end{proof}
\begin{proof}[Proof of Proposition \ref{prop:musurj}]
  By Lemma \ref{lem:dimcount} the dimension of $H^0(X,L)\otimes H^0(X,H)$ is 18, and the
  dimension of $H^0(X, H+L)$ is 15.
  
  On the other hand, Proposition \ref{prop:kermu} shows that the kernel of $\mu'$ is
  three-dimen\-sional. By Lemma \ref{lem:comparekernels}, this implies that the kernel of $\mu$ is
  three-dimensional as well. 
  
  Therefore, the image of $\mu$ is 15-dimensional, and thus $\mu$ is surjective.

  Note that $H+L$ is ample, since it is in the interior of the nef cone (compare Lemma
  \ref{lem:cones}.\ref{it:cones-c}). 
\end{proof}

In particular this completes the proof of Theorem \ref{thm:baselocusH+L}.


\section{Generic base point freeness for \texorpdfstring{$H+L$}{H+L}}\label{sec:genbpfforH+L}
Let again $X\coloneqq \Hilb^2(S)$ for a K3 surface $S$ with $\Pic(S)=\bZ\cdot H_S$ for an ample line
bundle with $(H_S)^2=2$. Keep the notation of page \pageref{notation}. In particular $H+L\in \Pic(X)$ is
the ample line bundle on $X$ which has base points along a $\bP^2$ by Theorem \ref{thm:baselocusH+L}.

In this section we show 

\begin{proposition}\label{prop:H+Lgenbpf}
  Let $X$ and $H+L \in \Pic(X)$ be as above (and keep the notation from page \pageref{notation}). 
  Then there exists a family $\cX \to T$ over a one-dimensional connected base $T$ with $\cA\in
  \Pic(\cX)$ such that 
  \begin{compactenum}
    \item there is $0\in T$ such that $\cX_0= X$ and $\cA_0 =H+L$, \label{it:defofXH+L}
    \item for all $t\in T$ the restriction $\cA_t$ is ample, and \label{it:Atample}
    \item for some $t_0\in T$ the line bundle $\cA_{t_0}$ is base point free. \label{it:Aisbpf}
  \end{compactenum}
\end{proposition}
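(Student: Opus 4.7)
The plan is to produce the family by taking a sufficiently generic one-parameter deformation of the pair $(X, H+L)$ inside the 20-dimensional deformation space $\Def(X, H+L)$, and to show that on a dense open of the base the line bundle becomes base point free. Choose an algebraic curve $T$ parametrizing a family $\cX \to T$ with $\cA \in \Pic(\cX)$ satisfying $(\cX_0, \cA_0) = (X, H+L)$, arranged so that $\rho(\cX_t) = 1$ for very general $t$ --- this is possible because the general deformation of $X$ in $\Def(X, H+L)$ preserves only the single class $H+L$ in its Picard group. Ampleness of $\cA_t$ (condition (b)) then follows from the projectivity criterion together with $q(\cA_t) = 6 > 0$ and $\cA_t \in \cC_{\cX_t}$ by continuity; shrinking $T$ if necessary, (a) and (b) hold.

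The crux is (c). The base locus of $H+L$ is $P \cong \bP^2$ by Theorem \ref{thm:baselocusH+L}, arising as a Mukai-flop locus with normal bundle $N_{P|X} \cong \Omega^1_{\bP^2}$. The cohomology computation $h^0(P, N_{P|X}) = 0$, $h^1(P, N_{P|X}) = 1$ makes $P$ infinitesimally rigid with only a single obstruction to deforming along $X$, so for a generic Kodaira--Spencer class this obstruction is nonzero and $P$ does not lift to $\cX_t$ at a very general point. Moreover, with $\rho(\cX_t) = 1$ the nef cone of $\cX_t$ has no interior walls, so Theorem \ref{thm:Wierzba} excludes any other flopping $\bP^2$ in $\cX_t$, and since every element of $\Pic(\cX_t)$ is a multiple of the ample generator, the base-divisor classification of \cite{Riess18} cannot produce a fixed divisor of $\cA_t$ (there is no effective class $L$ with $q(L) = 0$).

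To pass from ``no codimension-two $\bP^2$ and no fixed divisor'' to actual base point freeness, I would invoke the scheme-theoretic analysis of the base locus of $H+L$ from Section \ref{sec:genbpfforH+L}. Since $h^0(\cX_t, \cA_t) = 15$ is constant by Kodaira vanishing and Proposition \ref{prop:EGL}, upper semicontinuity of fibre dimension forces the relative base scheme of $\cA$ to degenerate at very general $t$, and combined with the obstruction argument above the degeneration must end in the empty scheme. The hardest step will be ruling out residual low-dimensional base loci (base curves or isolated base points) on the deformed fibres. This should hinge on a concrete description of the scheme structure of the base scheme $\cB_0 \subset X$, which can be read off from the kernel of the multiplication map $\mu$ in Proposition \ref{prop:musurj}, and on showing that under a generic deformation this structure is not inherited by any nearby fibre.
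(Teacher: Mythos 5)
Your setup (a one-parameter deformation with $\rho(\cX_t)=1$ at very general $t$, ampleness after shrinking $T$, constancy of $h^0=15$) matches the paper, but part \ref{it:Aisbpf} is where your argument has genuine gaps. First, the obstruction claim is asserted, not proved: the single obstruction in $H^1(P,N_{P|X})\iso\bC$ to deforming $P$ sideways is the image of the Kodaira--Spencer class under a map $H^1(X,T_X)\to H^1(P,N_{P|X})$, and you must show this map is non-zero \emph{on the codimension-one subspace of deformations preserving the class $H+L$}; a priori it could vanish identically there, and with $h^1(N_{P|X})=1$ there is no room for a ``generic position'' argument without an actual computation. The paper never argues infinitesimally; instead it shows (Lemma \ref{lem:miishalf}, Remark \ref{rem:type-argument}) that $\deg(\alpha.[C])=\half(\alpha,W)_q$ for \emph{all} $\alpha\in H^2(X,\bZ)$, where $C\subset P$ is a line, and that at a very general fibre with $\rho(\cX_t)=1$ the class $W_t$ is no longer of type $(1,1)$, so a transcendental class pairs non-trivially with the (locally constant) class of a curve inside the would-be deformed base surface --- contradicting that transcendental classes have degree zero on curves. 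This Hodge-theoretic step is exactly the substitute for your unproven obstruction statement.

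Second, you explicitly leave open ``the hardest step'', ruling out residual base curves or points on nearby fibres, and your appeal to upper semicontinuity does not force anything: semicontinuity is perfectly consistent with a two-dimensional base locus on every fibre, and ``no flopping $\bP^2$ and no fixed divisor'' does not by itself exclude one-- or zero-dimensional base loci. The paper closes this by a scheme-theoretic argument you only gesture at: Proposition \ref{prop:structureofBH+L} shows $\fB(H+L)=P$ with its \emph{reduced} structure (using surjectivity of $\mu$, $\fB(H+L)=\fB(L)$, and the computation of $\fB(\phi^*L)$ on the blow-up), and Corollary \ref{cor:descrofdeformedBL} then shows that the relative base scheme $\cB\subset\cX$, having central fibre exactly $P$, is (after shrinking $T$) irreducible, so either $\cB=P$ --- in which case $\cA_t$ is base point free for all $t\neq 0$, with no residual loci to worry about --- or $\cB$ dominates $T$ and is a flat family of \emph{surfaces} specializing to $P$, which is then killed by the curve-class argument above. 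So the dichotomy ``empty or a surface through $P$'' is a consequence of the reducedness of $\fB(H+L)$, not something to be checked fibre by fibre; without that statement and without a proof of the non-persistence of such a family, your proposal does not yet yield part \ref{it:Aisbpf}.
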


This completes the proof of the following theorem:
\begin{theorem}\label{thm:genbpf!}
  Consider the moduli space $\cM_{d,m}$ of $(d,m)$-polarized irreducible symplectic varieties of
  K3$^{[2]}$-type. Then for a generic pair $(\Xtild, \Atild)\in \cM_{d,m}$ the line bundle $\Atild$ is
  base point free.
\end{theorem}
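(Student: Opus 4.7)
The plan is to split into cases according to $(d,m)$. For $(d,m) \neq (3,2)$ with $\cM_{d,m} \neq \emptyset$, the statement is exactly Theorem~\ref{thm:genbpf4}, so nothing further is needed. The entire burden therefore falls on the case $(d,m) = (3,2)$.

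For that case, I would first verify that the problematic pair $(X, H+L)$ itself lies in $\cM_{3,2}$. Writing $H+L = 2H-\delta$ in the standard decomposition of $\Pic(X)$, a direct computation gives $q(H+L) = 4\,q(H)+q(\delta) = 6$, while Remark~\ref{rem:computediv} yields $\div(H+L) = \gcd(2,2)=2$; ampleness follows from Lemma~\ref{lem:cones}.\ref{it:cones-c}. Thus this pair carries exactly the numerical invariants that obstructed Theorem~\ref{thm:genbpf4} from handling $(3,2)$, and it is a genuine point of $\cM_{3,2}$.

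Next I would feed $(X, H+L)$ into Proposition~\ref{prop:H+Lgenbpf}. This delivers a connected one-parameter family $\cX \to T$ with a relative polarization $\cA \in \Pic(\cX)$ specializing to $(X, H+L)$ at $0 \in T$, fiberwise ample, and base point free at some other fiber $(\cX_{t_0}, \cA_{t_0})$. Because the Beauville--Bogomolov--Fujiki square $q(\cA_t)$ and the divisibility $\div(\cA_t)$ are locally constant in families, the classifying map $T \to \cM_{3,2}$ is well defined and realizes $(\cX_{t_0}, \cA_{t_0})$ as an honest point of $\cM_{3,2}$ whose polarization is base point free.

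Finally I would conclude by a standard openness/irreducibility argument: base point freeness of a fiberwise ample line bundle is an open condition in flat projective families, while $\cM_{3,2}$ is irreducible by the corollary to Proposition~\ref{prop:candeform}. Hence the locus of pairs in $\cM_{3,2}$ whose polarization is base point free is a nonempty Zariski-open subset of an irreducible variety, and therefore Zariski-dense---which is precisely generic base point freeness. The hard work is not in the present theorem but in Proposition~\ref{prop:H+Lgenbpf}, which must construct the deformation trading the $\bP^2$-shaped base locus of $H+L$ for base point freeness in nearby fibers.
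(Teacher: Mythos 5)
Your proposal is correct and follows essentially the same route as the paper: reduce to $(d,m)=(3,2)$ via Theorem \ref{thm:genbpf4}, check that $H+L=2H-\delta$ has $q=6$, $\div=2$ and is ample, take the base point free fibre $(\cX_{t_0},\cA_{t_0})$ supplied by Proposition \ref{prop:H+Lgenbpf} as a point of $\cM_{3,2}$, and conclude by openness of base point freeness together with connectedness/irreducibility of the moduli space. The only cosmetic difference is that you verify the numerical invariants on the central fibre $(X,H+L)$ and transport them, while the paper states them directly for $\cA_{t_0}$ (including primitivity), which amounts to the same computation.
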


\begin{proof}[{Proof of the implication Proposition \ref{prop:H+Lgenbpf} $\Rightarrow$ Theorem \ref{thm:genbpf!}}]
  Theorem \ref{thm:genbpf4} shows the statement unless $(d,m)=(3,2)$. Since base point freeness is an
  open property in families (and since $\cM_{d,m}$ is connected by Proposition \ref{prop:candeform}), it
  suffices to find a pair $(X_{t_0},A_{t_0})\in \cM_{3,2}$ with $A_{t_0}$ base point free.
  
  We claim that the pair $(\cX_{t_0},\cA_{t_0})$ from Proposition \ref{prop:H+Lgenbpf} is such a
  pair. First, note that $\cA_{t_0}$ is primitive since 
  it is the deformation of the primitive line bundle $H+L=\cA_0$. Further,
  $q(\cA_{t_0})=q(\cA_0)=q(H+L)=6$ and $\div(\cA_{t_0})=\div(\cA_0)=\div(H+L)=2$. Finally $\cA_{t_0}$ is
  ample by  Proposition \ref{prop:H+Lgenbpf}.\ref{it:Atample}.
\end{proof}

 For the proof of Proposition \ref{prop:H+Lgenbpf},  use the scheme structure on the base locus of an
 effective line bundle.
Recall that the base locus of a line bundle can be equipped with a natural scheme structure in the
following way:
\begin{definition}\label{def:B}
  For a variety $X$ and $A\in \Pic(X)$ with $h^0(X,A)>0$, define $\fB(A)\subseteq X$ as the subscheme associated to the ideal sheaf
   \begin{equation*}
     \fI(A)\coloneqq {\rm im}\big(H^0(X,A)\otimes A^{-1} \overset{\rm ev}{\too} \dO_X\big)\subseteq \dO_X. 
   \end{equation*}
\end{definition}

 The following lemma is easy to observe:
 \begin{lemma}\label{lem:basescheme}
   Let $X$ be a variety and $A,A'\in \Pic(X)$ effective line bundles.
   \begin{enumerate}
   \item The support of $\fB(A)$ is exactly the base locus of $A$.\label{it:suppisBL}
   \item If $A$ is base point free, then $\fI(A)=\dO_X.$\label{it:Bifbpf}
   \item If $A$ is base point free, then $\fB(A+A') \inj \fB(A')$ is a closed immersion.\newline
     If furthermore $H^0(X,A)\otimes H^0(X,A')\to H^0(X,A+A')$ is surjective, then 
     $\fB(A+A') = \fB(A')$\label{it:Bforproducts}. 
   \item If $\phi\colon \Xhat \to X$ is a morphism such that the pullback induces an isomorphism $H^0(X,A)\iso H^0(\Xhat, \phi^*(A))$, then 
     $\fB(A)\times_X \Xhat = \fB(\phi^*A)$. \label{it:Bofpullback}
   \end{enumerate}
 \end{lemma}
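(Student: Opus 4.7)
The plan is to reduce each of the four statements to a straightforward manipulation of the ideal sheaf $\fI(A)$ appearing in Definition \ref{def:B}. All four parts are essentially bookkeeping about evaluation maps, but it pays to work consistently at the level of ideals rather than sets.

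For part \ref{it:suppisBL}, I would work locally. Around a point $x \in X$, pick a trivialization of $A$ so that the stalk $\fI(A)_x \subseteq \dO_{X,x}$ becomes the ideal generated by the evaluations of a basis of $H^0(X,A)$. Then $x$ lies in the scheme-theoretic support of $\fB(A)$ exactly when $\fI(A)_x \subsetneq \dO_{X,x}$, which by Nakayama is equivalent to all generators lying in $\fm_x$, i.e.\ every global section vanishing at $x$. This matches the definition of a base point. Part \ref{it:Bifbpf} is then immediate: if $A$ is base point free, part \ref{it:suppisBL} gives $\Supp \fB(A) = \emptyset$, and in fact the local argument shows $\fI(A)_x = \dO_{X,x}$ at every point, so $\fI(A) = \dO_X$.

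For part \ref{it:Bforproducts} the main observation is that the multiplication map $H^0(X,A)\otimes H^0(X,A')\to H^0(X, A+A')$ fits into a commutative diagram of evaluation morphisms, which yields the inclusion of ideals
\begin{equation*}
  \fI(A) \cdot \fI(A') \subseteq \fI(A+A') \subseteq \dO_X.
\end{equation*}
If $A$ is base point free then $\fI(A) = \dO_X$ by \ref{it:Bifbpf}, so $\fI(A') \subseteq \fI(A+A')$, which translates to the closed immersion $\fB(A+A')\inj \fB(A')$. If in addition the multiplication map on global sections is surjective, then every local generator of $\fI(A+A')$ can be written as a sum of products $s\cdot s'$ with $s\in H^0(X,A)$, $s'\in H^0(X,A')$; this gives the reverse inclusion $\fI(A+A') \subseteq \fI(A)\cdot \fI(A') = \fI(A')$, hence equality of the two subschemes.

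For part \ref{it:Bofpullback}, I would simply chase the definition: the evaluation map for $\phi^*A$ is the pullback of the evaluation map for $A$ via the hypothesized isomorphism $H^0(X,A)\iso H^0(\Xhat,\phi^*A)$, so $\fI(\phi^*A) = \phi^{-1}\fI(A)\cdot \dO_{\Xhat}$, which is exactly the ideal sheaf defining $\fB(A)\times_X\Xhat$. The main ``obstacle'' here is only notational honesty: none of the four parts is deep, and the only place one can slip is in part \ref{it:Bforproducts}, where one must keep the two inclusions $\fI(A)\cdot\fI(A')\subseteq \fI(A+A')$ and its partial converse distinct and correctly indexed.
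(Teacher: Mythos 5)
Your argument is correct, and it is exactly the kind of elementary ideal-sheaf bookkeeping the paper has in mind: the paper explicitly omits the proof of this lemma as ``very elementary arguments'', so there is no authorial proof to diverge from, and your local reduction (proper ideal in $\dO_{X,x}$ iff contained in $\fm_x$, the inclusion $\fI(A)\cdot\fI(A')\subseteq\fI(A+A')$ with its converse under surjectivity of the multiplication map, and $\fI(\phi^*A)=\phi^{-1}\fI(A)\cdot\dO_{\Xhat}$) fills the gap correctly; the appeal to Nakayama in part (a) is unnecessary (a proper ideal of a local ring lies in the maximal ideal for trivial reasons) but harmless.
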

 \begin{proof} We leave out the proof, which consists of very elementary arguments.
 \end{proof}

We now return to our example:  $X\coloneqq \Hilb^2(S)$ for a K3
surface $S$ with $\Pic(S)=\bZ\cdot H_S$ for an ample line 
bundle with $(H_S)^2=2$ with the notation of page \pageref{notation}, and Section
\ref{sec:geometryEX}. In particular $X\leftarrow \Xhat \to X'$ is the Mukai flop in $P\subseteq X$.
 For the proof of Proposition \ref{prop:H+Lgenbpf},  we will first study the scheme structure of the base locus of 
$H+L$. 

\begin{proposition}\label{prop:structureofBH+L}
  With the notation of the previous sections, the scheme structure of the base locus of $H+L$ is
  $\fB(H+L)=P$, where $P$ is equipped with its reduced induced scheme structure.
\end{proposition}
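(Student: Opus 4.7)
The plan is to reduce the scheme-theoretic computation of $\fB(H+L)$ to that of $\fB(L)$, and then compute the latter using the Mukai flop. First, since $H$ is base point free (Lemma \ref{lem:Hbpfinexample}) and $\mu$ is surjective (Proposition \ref{prop:musurj}), Lemma \ref{lem:basescheme}.\ref{it:Bforproducts} gives $\fB(H+L)=\fB(L)$ as subschemes of $X$. Thus it suffices to prove $\fB(L)=P$ with its reduced induced scheme structure.

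Next, I would pass to $\Xhat$ via the Mukai flop $X\leftarrow \Xhat \to X'$ in $P$ (Proposition \ref{prop:Cextremalray}). Corollary \ref{cor:compareLB}, together with $(L,W)_q=-2$ and $m=\half$ (Lemma \ref{lem:miishalf}), gives
\begin{equation*}
  \phi^*L=\phi'^*L'+E \quad\text{in }\Pic(\Xhat).
\end{equation*}
Since $L'$ is base point free on $X'$ (Theorem \ref{thm:Matsushita}) and the natural pullbacks identify $H^0(X,L)\iso H^0(\Xhat,\phi^*L)\iso H^0(\Xhat,\phi'^*L')$, every section of $\phi^*L$ is of the form $s_E\cdot t$ for some $t\in H^0(\Xhat,\phi'^*L')$, where $s_E\in H^0(\Xhat,\dO(E))$ cuts out $E$. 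Since the $t$'s generate $\phi'^*L'$ everywhere, these sections generate exactly $\dO_\Xhat(-E)\otimes \phi^*L\subseteq \phi^*L$, so $\fI(\phi^*L)=\dO_\Xhat(-E)$, i.e.\ $\fB(\phi^*L)=E$ with its reduced structure.

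Finally, I would descend to $X$ via Lemma \ref{lem:basescheme}.\ref{it:Bofpullback}, which yields $\fI(L)\cdot\dO_\Xhat=\dO_\Xhat(-E)=\fI(P)\cdot\dO_\Xhat$. Theorem \ref{thm:baselocusH+L} (combined with the first step) shows that $V(\fI(L))=P$ set-theoretically, hence $\fI(L)\subseteq \fI(P)$. The remaining equality $\fI(L)=\fI(P)$ is a local Nakayama argument at each $p\in P$: choosing coordinates $x_1,x_2,y_1,y_2$ with $\fI(P)=(y_1,y_2)$, the pullback condition translates on the two affine charts of the blow-up into the statement that the linear parts of elements of $\fI(L)_p$ span the conormal space $N^*_{P/X,p}$; Nakayama then yields $\fI(L)_p=\fI(P)_p$. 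Outside of $P$ the equality is immediate.

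The main obstacle is precisely this last descent step: pullback along $\phi$ of ideal sheaves can lose information, so the identity $\fI(L)\cdot\dO_\Xhat=\fI(P)\cdot\dO_\Xhat$ does not by itself imply $\fI(L)=\fI(P)$. Bridging the gap requires the 1-jet/Nakayama reformulation together with the already-known set-theoretic identity $\sqrt{\fI(L)}=\fI(P)$ coming from Theorem \ref{thm:baselocusH+L}.
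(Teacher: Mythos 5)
Your proof is correct and follows essentially the same route as the paper: reduce to $\fB(H+L)=\fB(L)$ via base point freeness of $H$ and surjectivity of $\mu$, pass to the Mukai flop where $\phi^*(L)=\phi'^*(L')+E$ and base point freeness of $L'$ show that $\fB(\phi^*(L))=E$ with its reduced structure, and then descend to $X$ using $\fB(L)\times_X\Xhat=\fB(\phi^*(L))$. The only difference is at the descent step: the paper argues by contradiction, asserting in one sentence that a non-reduced $\fB(L)$ would pull back to a non-reduced subscheme supported on $E$, whereas you justify exactly this point by the conormal-space/Nakayama argument on the two charts of the blow-up — so your write-up is, if anything, more explicit at the step you correctly single out as the delicate one.
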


\begin{proof}
  By Theorem \ref{thm:baselocusH+L} and Lemma \ref{lem:basescheme}.\ref{it:suppisBL},
  the support of $\fB(H+L)$ is $P$. 
  Furthermore, $H$ is base point free by Lemma \ref{lem:Hbpfinexample}, and Proposition \ref{prop:musurj} shows that $\mu \colon H^0(X,L) \otimes H^0(X,H) \to H^0
  (X,H+L)$ is surjective.
  Therefore, $\fB(H+L)=\fB(L)$ by Lemma \ref{lem:basescheme}.\ref{it:Bforproducts}.
  Let $E\subseteq \Xhat$ be the exceptional divisor of $\phi\colon \Xhat\to X$.

  Suppose for contradiction, that $\fB(L)$ does not have the reduced structure. Then also 
  $\fB(L)\times_X \Xhat$ would be a non-reduced scheme supported on $E$. By Lemma
  \ref{lem:basescheme}.\ref{it:Bofpullback}, we 
  know $\fB(L)\times_X \Xhat = \fB(\phi^*(L))$. Therefore, it suffices to show that $\fB(\phi^*(L))$
  is in fact reduced to get the desired contradiction.

  Corollary \ref{cor:compareLB} and Lemma \ref{lem:miishalf} show that  $\phi^*(L)=
 \phi'^*(L')+E$.
  
  Since $\phi'^*(L')$ is base point free (because $L'$ is base point free), Lemma
  \ref{lem:basescheme}.\ref{it:Bforproducts} implies that $\fB(\phi^*(L))=\fB(\phi'^*(L')+E) \inj\fB(E)$  is a
  closed immersion. Note however, that $\fB(E)=E$ with the reduced induced structure. 
  
  Taking everything together, we showed that $\fB(\phi^*(L))\inj E\subseteq \Xhat$ is a subscheme with
  support $E$ and thus $\fB(\phi^*(L))$ has the reduced structure. As we pointed out earlier, this give
  the desired contradiction.
\end{proof}

\begin{corollary}\label{cor:descrofdeformedBL}
  Let $\xi\colon \cX\to T$ be a family over a one-dimensional connected base, and $\cA\in
  \Pic(\cX)$ 
such that 
 there is $0\in T$ with $\cX_0= X$ and $\cA_0 =H+L$.
 Then there is an open set $U\subseteq T$ such that 
     for all $t\in U$ the restriction $\cA_t$ is ample, and the 
      base loci of the $\cA_t$ are either empty for all $t\in U\setminus\{0\}$, or they form a (flat) family of
      surfaces with special fibre $P$.
\end{corollary}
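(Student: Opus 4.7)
The plan is to analyse the relative base locus scheme $\fB(\cA)\subseteq\cX$ obtained by applying Definition \ref{def:B} to $\cA$ on $\cX$, and then to study its projection to $T$. First I would note that since $\cA_0=H+L$ is ample and ampleness is open in projective families, there is an open neighbourhood $U\subseteq T$ of $0$ on which $\cA_t$ is ample for every $t$; this settles the ampleness claim (ii). On such a $U$ every $\cA_t$ is big and nef, so Riemann--Roch for K$3^{[2]}$-type (Proposition \ref{prop:EGL}) combined with Kodaira vanishing gives $h^0(\cX_t,\cA_t)=15$, constant on $U$. By cohomology and base change, $\xi_*\cA$ is then locally free of rank $15$ on $U$, and the formation of the ideal $\fI(\cA)$ from Definition \ref{def:B} commutes with base change to fibres, so that $\fB(\cA)\cap\cX_t=\fB(\cA_t)$ as subschemes of $\cX_t$ for every $t\in U$. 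In particular, by Proposition \ref{prop:structureofBH+L}, the projection $\pi\colon\fB(\cA)|_U\to U$ has central fibre the reduced $\bP^2$ denoted $P$.

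Next I would decompose $\fB(\cA)|_U$ into irreducible components; after shrinking $U$ about $0$, every such component either dominates $U$ or is supported over $\{0\}$. Let $Y\subseteq\fB(\cA)|_U$ be the union of the dominating components equipped with its reduced structure. If no component dominates $U$, then $\fB(\cA)|_U\subseteq\cX_0$, and for every $t\in U\setminus\{0\}$ we obtain $\fB(\cA_t)=\emptyset$, giving the first alternative.

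Otherwise $Y\to U$ is a dominant morphism from a reduced scheme to the smooth one-dimensional $U$, hence flat, with fibres of pure dimension $\dim Y-1$. For $t\in U\setminus\{0\}$ the vertical components of $\fB(\cA)|_U$ do not meet $\cX_t$, so $Y_t=\fB(\cA_t)$, and the base loci for $t\neq0$ fit into the flat family $Y\to U$. The central fibre $Y_0$ is a closed subscheme of $\fB(\cA)|_{\cX_0}=P$, hence is reduced. Provided $\dim Y=3$, the fibre $Y_0$ is two-dimensional and reduced inside the irreducible surface $P\cong\bP^2$, which forces $Y_0=P$ scheme-theoretically, proving the second alternative.

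The hard part will be ruling out the pathological possibility where the only horizontal components of $\fB(\cA)|_U$ have dimension at most $2$ (so that $Y_t$ would be a curve or smaller for $t\neq 0$) while the whole of $P$ appears in $\fB(\cA)|_U$ via a vertical component. I would address this by exploiting the reducedness of $\fB(\cA)_0=P$ from Proposition \ref{prop:structureofBH+L} together with the minimality of the base ideal $\fI(\cA)$: a vertical component of $\fB(\cA)|_U$ whose set-theoretic image is all of $P$ would have to coincide with the reduced $P$, and one argues that such a component is already contained in the closure of the horizontal part $Y$, hence $\dim Y=3$ and the claimed flat family of surfaces with special fibre $P$ is produced.
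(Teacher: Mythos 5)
Your setup (openness of ampleness, constancy of $h^0=15$ via Kodaira vanishing and Riemann--Roch, cohomology and base change so that $\fB(\cA)\cap\cX_t=\fB(\cA_t)$, and reducedness of the central fibre from Proposition \ref{prop:structureofBH+L}) matches the paper's proof. But the decisive step is exactly the one you defer: ruling out the mixed configuration in which the horizontal part of $\fB(\cA)|_U$ has fibre dimension at most $1$ while $P$ enters only through a vertical component. Your proposed fix -- ``a vertical component whose support is $P$ is already contained in the closure of the horizontal part $Y$, hence $\dim Y=3$'' -- is asserted, not argued, and it does not follow from the reducedness of $\fB(\cA_0)=P$ alone. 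Indeed, reducedness of the central fibre is compatible with such a mixed configuration at the level of abstract schemes: in local coordinates $(t,u,v,w_1,w_2)$ on the five-fold $\cX_U$ (with $t$ the base parameter), the ideal $\fI=(w_1,w_2,vt)$ defines a scheme with two reduced components, a vertical plane $\{t=w_1=w_2=0\}$ and a horizontal surface $\{v=w_1=w_2=0\}$ meeting the central fibre in a curve, and yet its scheme-theoretic central fibre $(w_1,w_2,vt)+(t)=(w_1,w_2,t)$ is reduced. So the dichotomy of the corollary is precisely what must be established here, and your proposal leaves it open; the paper closes this point by arguing that a second irreducible component of the relative base scheme meeting $\cX_0$ would force a non-reduced structure on $\fB(\cA)\cap\cX_0$, hence after shrinking $U$ the base scheme has a single component, which is either horizontal (flat family of surfaces with special fibre $P$) or supported over $0$ (base loci empty for $t\neq 0$). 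Whatever route you take, you must supply an actual argument at this point rather than the placeholder ``one argues that''.

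A smaller point: the justification ``$Y_0$ is a closed subscheme of $P$, hence is reduced'' is not valid reasoning, since closed subschemes of reduced schemes need not be reduced. The conclusion you want is nevertheless available without it: once you know $Y_0\subseteq P$ is a closed subscheme of dimension $2$, its support is all of the integral surface $P$, so its ideal sheaf vanishes at the generic point and hence is zero, giving $Y_0=P$ scheme-theoretically.
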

\begin{proof}
  Since $H+L$ is ample,
  there exists an open subset $U\subseteq T$ such that for all $t\in U$ the restriction $\cA_t$
  is ample.  Kodaira vanishing shows that $h^1(X,\cA_t)=0$ for all $t\in U$.
  Consequently, one can apply  base-change (see e.g.~\cite[Theorem 5.10]{FGA-explained}) to see that 
  $R^0{\xi_U}_*(\cA_U)={\xi_U}_*(\cA_U)$ is locally free
  of rank $h^0(X, H+L)=15$ (compare Lemma \ref{lem:dimcount}.\ref{part:h0H+L}), where $\xi_U$ is the
  restriction of $\xi$ to the preimage $\cX_U$ of $U$, and $\cA_U\coloneqq \cA|_{\cX_U}$. 
  After shrinking $U$, we may therefore assume that ${\xi_U}_*(\cA_U) \iso \dO_U^{\oplus 15}$, and pick a basis
  $s^1,\dots, s^{15} \in 
  H^0(U,{\xi_U}_*(\cA_U))\iso H^0(\cX_U,\cA_U)$ corresponding to the standard sections in
  $H^0(U,\dO_U^{\oplus 15})$.

  Let $\cB\coloneqq \fB(s^1,\dots, s^{15})\subseteq \cX_U$ be the vanishing scheme of the sections
  $s^1,\dots ,s^{15}$ 
  (defined analogously to Definition \ref{def:B}). 
  One can check that 
  \begin{equation*}
    \cB\times_{\cX_U} X =\fB(s^1,\dots, s^{15})\times_{\cX_U} X 
    = \fB(s^1|_X, \dots , s^{15}|_X) = \fB(\cA_0)=\fB(H+L).
  \end{equation*} 
  Since we know that $ \cB\times_{\cX_U} X=\fB(H+L) = P$ with the reduced induced structure by Proposition
  \ref{prop:structureofBH+L},
  $\cB$ has an irreducible component which meets $X$ in $P$ and no other component of $\cB$ can meet $X$
  (since this would cause a non-reduced structure of $\cB\times_{\cX_U} X$).  
  
  By shrinking $U$, we may assume that $\cB$ has only one irreducible component. If $\cB$ maps
  surjectively onto $U$, then it  is flat over $U$ and has constant fibre dimension (since $U$ is
  one-dimensional), 
  and thus 
  $\cB$ is a family of surfaces. Otherwise $\cB=P$.  To conclude the proof, observe that the base locus
  of $\cA_t$ is 
  $\fB(\cA_t)=\cB\times_{\cX_U} \cX_t$ for every $t\in U$. 
\end{proof}

\begin{proof}[{Proof of Proposition \ref{prop:H+Lgenbpf}}]
  Pick a deformation $(\cX\overset{\xi}{\too} T,\cA)$ of $(X, H+L)$  over a one-dimensional base $T$, such that the
  very general fibre has Picard rank $\rho(\cX_t)=1$.  
  
  We will show that, up to shrinking $T$, this family satisfies all claimed properties. 
  By assumption $(\cX,\cA)$ is a deformation of $(X,H+L)$ and thus there exists $0\in T$ with $\cX_0=X$
  and $\cA_0=H+L$ (which was Part \ref{it:defofXH+L}).

  Corollary \ref{cor:descrofdeformedBL} shows that (up to replacing $T$ by an open $U\subseteq T$), we
  may assume that $\cA_t$ is ample for all $t$ (and thus \ref{it:Atample}), and furthermore
  that one of the following cases will occur: 
  either $\cA_t$ is base point free for all $t\neq 0$, or the base loci of the $A_t$ vary in a flat
  family $\cB$ of surfaces with special fibre $P$.

  In the first case, we are done, since any $t\neq 0$ is base point free, and thus satisfies condition
  \ref{it:Aisbpf}. 

  In the following we will show that the assumption $\rho(\cX_t)=1$ for very general $t\in T$ excludes the
  second case.
  
  Suppose for contradiction that there exists a flat family of surfaces $\cB\subseteq \cX\to T$ with
  special fibre $P$. 
  Since $H+L$ is ample, one can pick $k\in \bN$ such that $k(H+N)$ is very ample. In
  particular there is a section $s_0\in H^0(X,k(H+N))$ which restricts non-trivially to $P$. Let
  $D\subseteq P$ be the intersection of $P$ with the zero-locus of $s_0$. 
  
  As in the proof of Corollary \ref{cor:descrofdeformedBL}, one can see that (up to shrinking $T$) $s_0$
  is the restriction of a section $s\in H^0(\cX,k\cA)$.

  Let $\cD$ denote the intersection of $\cB$ with the zero locus of $s$.
  After potentially shrinking $T$, we can assume that 
   $\cD$ is a flat family of curves. 
   Therefore, the class  $[\cD_t]\in H^6(\cX_t, \bZ)\iso H^6(X,\bZ)$ does not depend on $t$.
   
   For the rest of the proof, we will show that this curve class does not deform to the very general fibre, which gives the
   desired contradiction to the existence of $\cB$.

   Let $C\subseteq P\iso \bP^2$ be a line. There exists $l\in \bN$ with
   $[\cD_0]=l[C]\in H^6(X,\bZ)$.
   
   Lemma \ref{lem:miishalf} shows that $\deg(\alpha.[C])= \half (\alpha,W)_q$ for all 
   $\alpha \in H^{1,1}(X,\bZ)$, where $W= 2H-3\delta$ (with the notation on page
   \pageref{notation}). By Remark \ref{rem:type-argument} both $\deg(\_.[C])$ and $(\_,W)_q$  vanish on the transcendental
   part $H^2(X,\bZ)_{\tr}$ and this implies that 
   $\deg(\alpha.[C])= \half (\alpha,W)_q$ for all $\alpha \in H^2(X,\bZ)$.

   Pick a very general $t\in T$ with $\rho(\cX_t)=1$.   
   Note that $\cA_t \in H^{1,1}(\cX_t,\bZ) \subseteq H^2(\cX_t,\bZ)$ spans $H^{1,1}(\cX_t,\bZ)$ and corresponds to
   $\cA_0=H+L \in H^2(X, \bZ)$. 
   Let $W_t\in H^2(\cX_t,\bZ)$ be the class corresponding to $W\in H^2(X,\bZ)\iso H^2(\cX_t,\bZ)$. Since
   $W$ is not a multiple of $\cA_0$, the class $W_t$ in not contained in
   $H^{1,1}(\cX_t,\bZ)$. Therefore, there exists an element $\alpha_t \in H^2(\cX_t,\bZ)_{\rm tr}$ such
   that $(\alpha_t, W_t)\neq 0$.
   Consequently 
   \begin{equation*}
     \deg(\alpha_t.[\cD_t])=\deg(\alpha_0.[\cD_0])= \deg(\alpha_0.l[C])=l (\alpha_0, W)_q =
     l (\alpha_t, W_t) \neq 0,
   \end{equation*}
   which shows that $[\cD_t]$ is not the class of a curve. 
    
   This is a contradiction and shows that the second case cannot occur, which concludes the proof.
\end{proof}

\bibliographystyle{alpha}
\bibliography{\folder Literatur}

\end{document}